\numberwithin{equation}{section}
\newtheorem{theorem}{Theorem}[section]
\newtheorem{lemma}[theorem]{Lemma}
\newtheorem{proposition}[theorem]{Proposition}
\newtheorem{corollary}[theorem]{Corollary}
\theoremstyle{definition}
\newtheorem{definition}[theorem]{Definition}
\newtheorem*{acknowledgment}{Acknowledgments}
\newtheorem{question}[theorem]{Question}
\theoremstyle{definition}
\newtheorem{remark}[theorem]{Remark}
\newtheorem{example}[theorem]{Example}
\DeclareMathOperator{\chara}{char}
\DeclareMathOperator{\NC}{NC}
\DeclareMathOperator{\RV}{RV}
\DeclarePairedDelimiter\ceil{\lceil}{\rceil}
\newcommand{\PP}{{\mathbb P}}
\newcommand{\ZZ}{{\mathbb Z}}
\newcommand{\NN}{{\mathbb N}}
\newcommand{\RR}{{\mathbb R}}
\def\R{{\mathcal R}}
\def\R{{\mathcal R}}
\def\aa{{\mathfrak a}}
\def\mm{{\mathfrak m}}
\def\nn{{\mathfrak n}}
\def\qq{{\mathfrak q}}
\def\bb{{\mathfrak b}}
\def\cc{{\mathfrak c}}
\newcommand{\kk}{\Bbbk}
\def\ahat{\widehat{\alpha}}
\def\rhat{\widehat{\rho}}
\def\rlim{\rho^{\lim}}
\def\vhat{\widehat{v}}
\def\1{{\bf 1}}
\def\0{{\bf 0}}
\begin{document}
	
	\title{Resurgence number of graded families of ideals}
	
	\author{T\`ai Huy H\`a}
	\address{Tulane University, Department of Mathematics, 6823 St. Charles Ave., New Orleans, LA 70118, USA}
	\email{tha@tulane.edu}
	
	
	\author{Arvind Kumar}
	\address{Department of Mathematics, Chennai Mathematical Institute, Siruseri
		Kelambakkam, Chennai, India - 603103}
	\email{arvindkumar@cmi.ac.in}
	
	\author{Hop D. Nguyen}
	\address{Institute of Mathematics, VAST, 18 Hoang Quoc Viet, Cau Giay, 10307 Hanoi, Vietnam}
	\email{ngdhop@gmail.com}

        \author{Th\'ai Th\`anh Nguy$\tilde{\text{\^E}}$n}
        \address{Department of Mathematics and Statistics, McMaster University, Canada and University of Education, Hue University, 34 Le Loi St., Hue, Viet Nam}
        \email{nguyt161@mcmaster.ca  tnguyen11@tulane.edu}
	
	\keywords{resurgence number, asymptotic resurgence number, symbolic power, integral closure, power of ideals, Rees valuation, graded family, filtration of ideals}
	\subjclass[2010]{13A18, 13A30, 12J20, 14A05}
	
	\begin{abstract}
		We define the resurgence and asymptotic resurgence numbers associated to a pair of graded families of ideals in a Noetherian ring. These notions generalize the well-studied resurgence and asymptotic resurgence of an ideal in a polynomial ring. We examine when these invariant are finite and rational. We investigate situations where these invariant can be computed via Rees valuations or realized as actual limits of well-defined sequences. We study how the asymptotic resurgence changes when a family is replaced by its integral closure. Many examples are given to illustrate that whether or not known properties of resurgence and asymptotic resurgence of an ideal would extend to that of a pair of graded families of ideals generally depends on the Noetherian property and finite generation of the Rees algebras of these families.
	\end{abstract}
	
	\maketitle


\tableofcontents

\section{Introduction} \label{sec.intro}

The \emph{Ideal Containment Problem}, which investigates when symbolic powers of an ideal are contained in its ordinary powers, originated from pioneer work of Ein, Lazarsfeld and Smith \cite{ELS} and of Hochster and Huneke \cite{HH}, and has evolved to be an active research area during the last two decades (cf. \cite{BGHN22a, BGHN22b, BN21, BH, CEHH, DS2020, DT2017, G2020, HH2013, HS2015, TX2020} and references therein and thereafter). In this research program, resurgence and asymptotic resurgence numbers were introduced as measures for the non-containment between symbolic powers and ordinary powers of an ideal. These invariant have attracted much attention and been studied by many authors (cf. \cite{BGHN22a, BH, CHHVT, DFMS, DHNSTT, GHM2020, GHV, JKM2021, Ng23a, Ng23b}).

In this paper, we generalize these notions to measure the non-containment between members of arbitrary graded families of ideals. Let $S$ be a Noetherian commutative ring, and let $\aa_\bullet = \{\aa_i\}_{i \ge 1}$ and $\bb_\bullet = \{\bb_i\}_{i \ge 1}$ be graded families of ideals in $S$. We define the \emph{resurgence} and \emph{asymptotic resurgence} numbers of the ordered pair $(\aa_\bullet, \bb_\bullet)$ to be:
\begin{align*}
	\rho(\aa_\bullet, \bb_\bullet) & = \sup\left\{ \dfrac{s}{r} ~\Big|~ s, r \in \NN, \aa_s \not\subseteq \bb_r \right\}, \text{ and } \\
	\rhat(\aa_\bullet,\bb_\bullet) & = \sup\left\{ \dfrac{s}{r} ~\Big|~ s, r \in \NN, \aa_{st} \not\subseteq \bb_{rt} \text{ for } t \gg 1 \right\}.
\end{align*}
Obviously, if $\aa_\bullet = \{I^{(i)}\}_{i \ge 1}$ and $\bb_\bullet = \{I^i\}_{i \ge 1}$ are families of symbolic and ordinary powers of an ideal $I \subseteq S$, then we recover the usual resurgence and asymptotic resurgence numbers of $I$, which were defined by Bocci and Harbourne \cite{BH} and by Guardo, Harbourne and Van Tuyl \cite{GHV}, respectively. Furthermore, investigating the resurgence and asymptotic resurgence numbers associated to a pair of graded families of ideals allows us to look at the containment problem from a much more general context. For instance, this would include the notion of \emph{integral closure resurgence}, which was introduced by Harbourne, Kettinger and Zimmitti \cite{HKZ}, and the resurgence and asymptotic resurgence of a filtration associated to a covering polyhedron \emph{relative} to another filtration of ideals, which was recently studied by Grisalde, Seceleanu and Villarreal \cite{GSV2021}. This approach would also allow us to consider containment between a graded family of ideals and a filtration of tight closures of powers of another ideal in positive characteristics.

A priori, from their definitions, it is difficult to compute the resurgence and asymptotic resurgence numbers. In fact, to the best of our knowledge, algorithms to compute the resurgence number of an ideal in general are not available. On the other hand, DiPasquale, Francisco, Mermin and Schweig \cite{DFMS} showed that, when $S$ is a polynomial ring over a field, the asymptotic resurgence number $\rhat(I)$ of an ideal $I \subseteq S$ can be computed by considering Rees valuations of $I$, and that $\rhat(I) = \rhat\left(I^{(\bullet)}, \overline{I^\bullet}\right)=\rho\left(I^{(\bullet)}, \overline{I^\bullet}\right)$, where $ \overline{I^\bullet}$ denotes the filtration $\{\overline{I^n}\}_{n \ge 1}$ of integral closures of powers of $I$. We shall illustrate by many examples that this is not the case for arbitrary pairs $(\aa_\bullet, \bb_\bullet)$ of graded families of ideals. The failure appears to lie at the non-Noetherian property of the Rees algebra $\R(\bb_\bullet)$ of $\bb_\bullet$ and the non-finitely generation of $\R(\overline{\bb_\bullet})$ over $\R(\bb_\bullet)$. Here, the \emph{Rees algebra} $\R(\bb_\bullet)$, which plays a prominent role in this paper,
is defined by
\[
\R(\bb_\bullet)=S\oplus \bb_1t\oplus \bb_2t^2\oplus \cdots \subseteq S[t],
\]
and $\overline{\bb_\bullet}=\{\overline{\bb_i}\}_{i\ge 1}$ is the graded family of integral closures of $\bb_\bullet$.

Our goal is to see for which graded families $\aa_\bullet$ and $\bb_\bullet$ of ideals in a Noetherian commutative ring $S$, the asymptotic resurgence $\rhat(\aa_\bullet, \bb_\bullet)$ can still be computed by Rees valuations, and the equality $\rhat(\aa_\bullet, \bb_\bullet) = \rhat(\aa_\bullet, \overline{\bb_\bullet}) = \rho(\aa_\bullet, \overline{\bb_\bullet})$ remains to hold. In addition, we will define new sequences whose limits realize the asymptotic resurgence numbers $\rhat(\aa_\bullet, \bb_\bullet)$ and $\rhat(\aa_\bullet, \overline{\bb_\bullet})$. We shall further discuss when resurgence and asymptotic resurgence numbers are finite and are rational numbers.

We will now describe main results of the paper. We start by assuming that $S$ is a domain and letting $K$ denote its quotient field. For a valuation $v$ of the $K$, set
$$v(I) = \min \{v(x) ~\big|~ x \in I \setminus \{0\}\}.$$
It can be seen that for any graded family $\aa_\bullet$ of ideals in $S$, $\{v(\aa_i)\}_{i \ge 1}$ is a sub-additive sequence. Thus, by Fekete's Lemma, the limit $\lim\limits_{n \rightarrow \infty} \dfrac{v(\aa_n)}{n}$ exists and is equal to $\inf\limits_{n \in \NN} \dfrac{v(\aa_n)}{n}$. Following \cite{DFMS}, we define the \emph{skew Waldschmidt constant} of $\aa_\bullet$ with respect to $v$ to be
$$\vhat(\aa_\bullet) = \lim\limits_{n \rightarrow \infty} \dfrac{v(\aa_n)}{n} = \inf\limits_{n \in \NN} \dfrac{v(\aa_n)}{n}.$$
Our first result generalizes \cite[Theorem 1.2.1]{BH} from the Waldschmidt constant of an ideal to the skew Waldschmidt constant of a graded family of ideals in $S$ with respect to any valuation of $K$.

\medskip

\noindent\textbf{Theorem \ref{low_bou}.} Let $\aa_\bullet$ and $\bb_\bullet$ be graded families of nonzero ideals in $S$. Let $v$ be a valuation of $K$, that is supported on $S$, such that $\vhat(\bb_\bullet) > 0$. Then,
$$\dfrac{\vhat(\bb_\bullet)}{\vhat(\aa_\bullet)} \le \rhat(\aa_\bullet, \overline{\bb_\bullet}).$$
Moreover, if $\vhat(\aa_\bullet) = 0$ then $\rhat(\aa_\bullet, \overline{\bb_\bullet}) = \infty.$

\medskip

It is known (cf. \cite{SH2006}) that for any ideal $I \subseteq S$, there are finitely many \emph{Rees valuations} associated to $I$. Let $\RV(I)$ denote this finite set of Rees valuations of $I$. Our next result generalizes \cite[Proposition 4.2 and Theorem 4.10]{DFMS}.

\medskip

\noindent\textbf{\Cref{thm.ReesVal} and \Cref{asym_res_int}.} Let $\aa_\bullet$ and $\bb_\bullet$ be graded families of nonzero ideals in $S$.
\begin{enumerate}
	\item Suppose that the Veronese subring $\R^{[k]}(\bb_\bullet)$ of the Rees algebra $\R(\bb_\bullet)$ is a standard graded $S$-algebra for some $k \in \NN$. Then,
$$\rhat(\aa_\bullet, \overline{\bb_\bullet}) = \max_{v \in \RV(\bb_k)} \left\{ \dfrac{\vhat(\bb_\bullet)}{\vhat(\aa_\bullet)} \right\} = \sup_{v(\bb_k) > 0} \left\{\dfrac{\vhat(\bb_\bullet)}{\vhat(\aa_\bullet)}\right\}.$$
Particularly, if $\R^{[\ell]}(\aa_\bullet)$ is a standard graded $S$-algebra for some $\ell$, then $\rhat(\aa_\bullet, \overline{\bb_\bullet})$ is either a positive rational number or infinite.
\item Suppose that $\aa_\bullet$ and $\bb_\bullet$ are filtration and that $\R(\overline{\bb_\bullet})$ is a finitely generated $\R(\bb_\bullet)$-module. Then,
$$\rhat(\aa_\bullet, \overline{\bb_\bullet}) = \rhat(\aa_\bullet, \bb_\bullet).$$
\end{enumerate}

\medskip

Examples exist to show that the conclusions of \Cref{thm.ReesVal} and \Cref{asym_res_int} are not necessarily true without the stated hypotheses (see  Examples \ref{ex.irrational}, \ref{ex.rhatNOTnoeth} and \ref{ex.rhatNOTbar}). Examples also exist to illustrate that even when $\rhat(\aa_\bullet,\bb_\bullet) = \rhat(\aa_\bullet,\overline{\bb_\bullet})$ and the Rees algebra $\R(\bb_\bullet)$ is Noetherian, this value is not necessarily equal to $\rho(\aa_\bullet,\overline{\bb_\bullet})$ (see Example \ref{ex.rhoNOTequalBAR}).

Our next result addresses the question of when the value $\rhat(\aa_\bullet, \bb_\bullet) = \rhat(\aa_\bullet, \overline{\bb_\bullet})$ is equal to $\rho(\aa_\bullet, \overline{\bb_\bullet})$; that is, when replacing the graded family $\bb_\bullet$ by the family of its integral closures $\overline{\bb_\bullet}$ implies the equality between the resurgence and asymptotic resurgence numbers. We call a graded family $\bb_\bullet$ of ideals a \emph{$\bb$-equivalent} family, for some ideal $\bb$, if there exists a positive integer $k$ such that, for all $i \ge 1$,
$$\bb_{i+k} \subseteq \bb^i \subseteq \bb_i.$$
Examples of $\bb$-equivalent families include that of ordinary powers or their integral closures of a given ideal in an analytically unramified ring. We generalize \cite[Corollary 4.14]{DFMS} to the following statement.

\medskip

\noindent\textbf{Theorem \ref{thm.b1equivalent}.} Let $S$ be a domain that belongs to one of the following types:
\begin{enumerate}
	\item complete local Noetherian ring,
	\item finitely generated over a field or over $\ZZ$,
	\item or, more generally, finitely generated over a Noetherian integrally closed domain $R$ satisfying the property that every finitely generated $R$-algebra has a module-finite integral closure.
\end{enumerate}
Let $\aa_\bullet$ be a filtration and let $\bb_\bullet$ be a graded family of nonzero ideals in $S$. Suppose that $\bb_\bullet$ is $\bb$-equivalent for some ideal $\bb \subseteq S$. Then,
$$\rhat(\aa_\bullet,\bb_\bullet) = \rhat(\aa_\bullet,\overline{\bb_\bullet}) = \rho(\aa_\bullet,\overline{\bb_\bullet}).$$

\medskip

When the resurgence and asymptotic resurgence numbers cannot be computed explicitly, it is desirable to know whether these numbers can be realized as actual limits of well-constructed sequences. To answer this question, we define the following sequences.
For $s \ge 1$ and a valuation $v$ of $K = \text{QF}(S)$, when $S$ is a domain, set
$$\beta_s(\aa_\bullet, \bb_\bullet) := \inf \{d \mid \aa_s \not\subseteq \bb_d\} \text{ and } \beta_s^v(\aa_\bullet, \bb_\bullet) := \inf \{d \mid v(\aa_s) < v(\bb_d)\}.$$
Also, for $n \ge 1$, define
$$\rho^n(\aa_\bullet, \bb_\bullet) := \sup \left\{ \dfrac{s}{\beta_s(\aa_\bullet, \bb_\bullet)} ~\Big|~ \beta_s(\aa_\bullet, \bb_\bullet) < \infty \text{ and } s \ge n\right\}.$$
We prove the following theorems.

\medskip

\noindent\textbf{Theorem \ref{thm.limbetarho}.}
Let $S$ be a domain, let $\aa_\bullet$ be a graded family of ideals, and let $\bb_\bullet$ be a filtration of ideals in $S$. For $n \ge 1$, set $\beta_n=\beta_n(\aa_\bullet,\bb_\bullet)$, $\overline{\beta_n}=\beta_n(\aa_\bullet,\overline{\bb_\bullet})$, and for any valuation $v$ of $K$, set $\beta_n^v=\beta_n^v(\aa_\bullet,\bb_\bullet)$. Suppose that $\R^{[k]}(\bb_\bullet)$ is a standard graded $S$-algebra and $\R(\overline{\bb_k^{\bullet}})$ is a finitely generated $\R(\bb_k^{\bullet})$-module, for some $k \in \NN$. Then, there exists a valuation $v_0$ (which can be chosen as a Rees valuation of $\bb_k$) such that
$$\frac{1}{\widehat{\rho}(\aa_\bullet,\overline{\bb_\bullet})} = \lim_{n\rightarrow \infty} \frac{\beta_{n}}{n}  = \lim_{n\rightarrow \infty} \frac{\overline{\beta_{n}}}{n}  = \lim_{n\rightarrow \infty}  \frac{\beta_{n}^{v_0}}{n}.$$
	
\medskip

\noindent\textbf{Theorem \ref{thm.rhatrlimrho}.}
Let $S$ be a domain as in \Cref{thm.b1equivalent}. Let $\aa_\bullet$ be filtration of nonzero ideals in $S$, and $\bb_\bullet$ be a $\bb$-equivalent graded family, for some ideal $\bb \subseteq S$. Then,
$$\rhat(\aa_\bullet, \overline{\bb_\bullet})= \lim_{n \to \infty} \rho^n(\aa_\bullet, \overline{\bb_\bullet})=\rhat(\aa_\bullet, \bb_\bullet) = \lim_{n \to \infty} \rho^n(\aa_\bullet, \bb_\bullet).$$
\medskip

To characterize pairs of graded families $(\aa_\bullet,\bb_\bullet)$ of ideals for which $\rho(\aa_\bullet,\bb_\bullet) < \infty$, when $S$ is an arbitrary Noetherian ring, we make use of the topology that  a filtration of ideals defines. The topology $\tau_\aa$ given by a filtration $\aa_\bullet$ is said to be \emph{linearly finer} than the topology $\tau_\bb$ given by a filtration $\bb_\bullet$ if there exists a \emph{linear} function $f \in \ZZ_{\ge 0}[x]$ such that for every $i \ge 1$, $\aa_{f(i)} \subseteq \bb_i$. Our next result is stated as follows.

\medskip

\noindent\textbf{Theorem \ref{linearly_smaller}.} Let $\aa_\bullet $ and $\bb_\bullet $ be filtration of ideals in $S$. Then, $\tau_{\aa}$ is linearly finer than $\tau_{\bb}$ if and only if  $\rho(\aa_\bullet,\bb_\bullet) <\infty.$

\medskip

The rationality of resurgence and asymptotic resurgence numbers have also been addressed by many authors. We generalize \cite[Theorem 3.7]{DD2020} and prove the following results.

\medskip

\noindent\textbf{Corollaries \ref{cor.rationalAR} and \ref{cor.rationalR}.} Let $\aa_\bullet$ and $\bb_\bullet$ be filtration of nonzero ideals in $S$.
\begin{enumerate}
	\item Suppose that $\R^{[k]}(\aa_\bullet)$ and  $\R^{[\ell]}(\bb_\bullet)$ are standard graded $S$-algebras for some $k$ and $\ell$, and that $\R(\overline{\bb_\bullet})$ is a finitely generated $\R(\bb_\bullet)$-module. Then, $\rhat(\aa_\bullet, \bb_\bullet) = \rhat(\aa_\bullet, \overline{\bb_\bullet})$ is either infinity or a rational number.
	\item Suppose, in addition, that $S$ is an analytically unramified local ring and $\bb_\bullet$ is $\bb$-equivalent, for some ideal $\bb \subseteq S$. Then, $\rho(\aa_\bullet, \bb_\bullet)$ is either infinity or a rational number.
\end{enumerate}

\medskip

Finally, we obtain a criterion for the rationality of the resurgence number, which generalizes \cite[Proposition 2.2]{DD2020}.

\medskip

\noindent\textbf{\Cref{cor.rhoRatLim}.} Let $S$, $\aa_\bullet$ and $\bb_\bullet$ be as in \Cref{thm.b1equivalent}. If $\rhat(\aa_\bullet, \bb_\bullet) \not= \rho(\aa_\bullet, \bb_\bullet)$ then $\rho(\aa_\bullet, \bb_\bullet)$ is a rational number.

\medskip

The paper is outlined as follows. In the next section, we provide a lower bound for the asymptotic resurgence $\rhat(\aa_\bullet, \overline{\bb_\bullet})$ and look at the question of when this number can be computed via Rees valuations of certain member of the family $\bb_\bullet$. Section \ref{sec.ResAndIntCl} examines under which conditions the equality $\rhat(\aa_\bullet, \bb_\bullet) = \rhat(\aa_\bullet, \overline{\bb_\bullet})$ holds. Section \ref{sec.equiv} is devoted to the case when $\bb_\bullet$ is $\bb$-equivalent for an ideal $\bb \subseteq S$. We show that, in this case, the equality $\rhat(\aa_\bullet, \bb_\bullet) = \rhat(\aa_\bullet, \overline{\bb_\bullet})$ holds and this common value is also equal to $\rho(\aa_\bullet, \overline{\bb_\bullet})$. Particularly, replacing $\bb_\bullet$ by its integral closure results in the equality between resurgence and asymptotic resurgence numbers. In Section \ref{sec.compute}, we define new sequences whose limits realize the asymptotic resurgence $\rhat(\aa_\bullet, \overline{\bb_\bullet})$. We also consider basic properties of another version of the resurgence, namely, $\rho^{\lim}(\aa_\bullet, \bb_\bullet)$. Section \ref{sec.finite} investigates the finiteness of rationality of the resurgence and asymptotic resurgence numbers.

We refer the reader to \cite{SH2006} for unexplained terminology. Throughout the paper, $S$ will denote a Noetherian commutative ring.

A collection $\aa_\bullet = \{\aa_i\}_{i \ge 1}$ of ideals in $S$ is called a \emph{graded family} if $\aa_p  \aa_q \subseteq \aa_{p+q}$ for all $p,q \ge 1$. A graded family $\aa_\bullet$ is called a \emph{filtration} if $\aa_p \supseteq \aa_{p+1}$ for all $p \ge 1$. Typical examples of filtration of ideals are those of symbolic powers, the integral closures of powers, and ordinary powers of an ideal $I \subseteq S$.
For simplicity of notations, we shall use $I^{(\bullet)}$, $\overline{I^\bullet}$ and $I^\bullet$ to denote the filtration $\{I^{(i)}\}_{i \ge 1}$, $\{\overline{I^i}\}_{i \ge 1}$ and $\{I^i\}_{i \ge 1}$, respectively. We sometimes consider a family $\aa_\bullet = \{\aa_i\}_{i \ge 0}$, with the convention that $\aa_0 = S$.

\begin{acknowledgment} The authors would like to thank A.V. Jayanthan, who contributes and has many illuminating discussions with us at the preliminary stage of this paper.
	
	The first author (THH) is partially supported by Louisiana Board of Regents and the Simons Foundation. The second author (AK) is partially supported by Sciences and Engineering Research Board, India under the National Postdoctoral Fellowship (PDF/2020/001436). The third author (HDN) acknowledges the generous support from the Vietnam Academy of Science and Technology (grants CSCL01.01/22-23 and NCXS02.01/22-23).
	
	Part of this work was done when the authors were visiting the Vietnam Institute of Advanced Study in Mathematics (VIASM). We thank VIASM for its hospitality.
\end{acknowledgment}


\section{Resurgence numbers via Rees valuations} \label{sec.viaIntegralClosure}

The aim of this section is to investigate the asymptotic resurgence number of the pair $(\aa_\bullet, \overline{\bb_\bullet})$, where $\overline{\bb_\bullet} = \{\overline{\bb_i}\}_{i \ge 1}$. In \cite[Theorem 4.10]{DFMS}, it was shown that, for an ideal $I \subseteq S$, the asymptotic resurgence number $\rhat(I^{(\bullet)}, I^\bullet)$ of the families of symbolic and ordinary powers of $I$ can be computed via Rees valuations of $I$. As we shall see, this is no longer the case for the asymptotic resurgence of an arbitrary pair $(\aa_\bullet, \bb_\bullet)$ of graded families of ideals in $S$.

We will show that the asymptotic resurgence $\rhat(\aa_\bullet, \overline{\bb_\bullet})$ can be computed via Rees valuations if $\R(\bb_\bullet)$ is Noetherian; see \Cref{thm.ReesVal}. Coupled with results in Section \ref{sec.ResAndIntCl}, this shall imply that the asymptotic resurgence $\rhat(\aa_\bullet, \bb_\bullet)$ can be computed via Rees valuations when $\R(\bb_\bullet)$ is Noetherian and $\R(\overline{\bb_\bullet})$ is a finitely generated module over $\R(\bb_\bullet)$. Furthermore, we shall give a lower bound for $\rhat(\aa_\bullet, \overline{\bb_\bullet})$ in terms of \emph{skew Waldschmidt constants} of $\aa_\bullet$ and $\bb_\bullet$ with respect to a valuation of the quotient field of $S$; see \Cref{low_bou}.

We will begin by recalling two omnipresent constructions of the Rees algebra and its Veronese subalgebras.

\begin{definition}
	Let $\aa_\bullet = \{\aa_i\}_{i \ge 1}$ be a graded family of ideals in $S$ (with the convention that $\aa_0 = S$). 
\begin{enumerate} 
\item The \emph{Rees algebra} of $\aa_\bullet$ is defined to be
	$$\R(\aa_\bullet) = \bigoplus_{n \ge 0} \aa_n t^n \subseteq S[t].$$
\item For $k \in \NN$, the \textit{$k$-th Veronese subalgebra} of $\R(\aa_\bullet)$ is given by
    $$\R^{[k]}(\aa_\bullet) = \bigoplus_{n \ge 0} \aa_{kn} t^{kn} \subseteq S[t].$$
\end{enumerate}
\end{definition}

In the general, the Rees algebra of a graded family of ideals is not necessarily Noetherian, even when the graded family is that of symbolic powers of an ideal in $S$ (cf. \cite{C1991, H1982, R1985}).
It is a folklore result that a graded ring $\R = \bigoplus_{n \ge 0} R_n$ is Noetherian if and only if $R_0$ is a Noetherian ring and $\R$ is a finitely generated algebra over $R_0$.
We shall also say that the graded family $\aa_\bullet$ is \emph{Noetherian} if its Rees algebra is. It is well know that, see \cite[Remark 2.4]{Rat} and \cite[Proposition 2.1]{Sch88}, that if $\aa_\bullet$ is Noetherian graded family, then $\R^{[k]}(\aa_\bullet)$ is a standard graded $S$-algebra for some $k$.

For graded families $\aa_\bullet = \{\aa_i\}_{i \ge 1}$ and $\bb_\bullet = \{\bb_i\}_{i \ge 1}$ of ideals in $S$, we write $\aa_\bullet \le \bb_\bullet$ if $\aa_i \subseteq \bb_i$ for all $i \ge 1$. The following lemma follows directly from the definition.

\begin{lemma}
	\label{lem.estimate}
	Let $\aa_\bullet, \bb_\bullet, \aa_\bullet'$ and $\bb_\bullet'$ be graded families of ideals in $S$.
	\begin{enumerate}
		\item If $\bb_\bullet \le \bb_\bullet'$, then
		$$\rho(\aa_\bullet, \bb_\bullet) \ge \rho(\aa_\bullet, \bb_\bullet') \text{ and } \rhat(\aa_\bullet, \bb_\bullet) \ge \rhat(\aa_\bullet, \bb_\bullet').$$
		\item If $\aa_\bullet \le \aa_\bullet'$, then
		$$\rho(\aa_\bullet, \bb_\bullet) \le \rho(\aa_\bullet', \bb_\bullet) \text{ and } \rhat(\aa_\bullet, \bb_\bullet) \le \rhat(\aa_\bullet', \bb_\bullet).$$
	\end{enumerate}
\end{lemma}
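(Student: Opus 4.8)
The statement to prove is Lemma~\ref{lem.estimate}, which records two monotonicity properties of the resurgence and asymptotic resurgence numbers of a pair of graded families. Since the lemma is asserted to follow directly from the definitions, the plan is simply to unwind the definitions of $\rho$ and $\rhat$ and compare the sets over which the suprema are taken.

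For part (1), assume $\bb_\bullet \le \bb_\bullet'$, i.e.\ $\bb_r \subseteq \bb_r'$ for all $r \ge 1$. The plan is to observe that if $\aa_s \not\subseteq \bb_r'$ then, since $\bb_r \subseteq \bb_r'$, we also have $\aa_s \not\subseteq \bb_r$; hence the set $\{ s/r \mid \aa_s \not\subseteq \bb_r'\}$ is contained in $\{ s/r \mid \aa_s \not\subseteq \bb_r\}$, and taking suprema gives $\rho(\aa_\bullet,\bb_\bullet) \ge \rho(\aa_\bullet,\bb_\bullet')$. For the asymptotic version, the same implication applied with $s,r$ replaced by $st,rt$ shows that if $\aa_{st}\not\subseteq\bb_{rt}'$ for $t\gg1$ then $\aa_{st}\not\subseteq\bb_{rt}$ for $t\gg1$, so again the relevant index set grows and the supremum can only increase, giving $\rhat(\aa_\bullet,\bb_\bullet)\ge\rhat(\aa_\bullet,\bb_\bullet')$.

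For part (2), assume $\aa_\bullet \le \aa_\bullet'$, i.e.\ $\aa_s \subseteq \aa_s'$ for all $s \ge 1$. Here the plan is dual: if $\aa_s \not\subseteq \bb_r$ then, since $\aa_s \subseteq \aa_s'$, we get $\aa_s' \not\subseteq \bb_r$; so now $\{s/r \mid \aa_s \not\subseteq \bb_r\} \subseteq \{s/r \mid \aa_s' \not\subseteq \bb_r\}$, and taking suprema yields $\rho(\aa_\bullet,\bb_\bullet) \le \rho(\aa_\bullet',\bb_\bullet)$, with the asymptotic statement following by the identical argument on the indices $st, rt$.

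There is no real obstacle here; the only points to be mildly careful about are the conventions $\sup\emptyset = 0$ (or whatever the paper intends) so that the inequalities remain valid in the degenerate case where one of the index sets is empty, and the standard fact that if $A \subseteq B$ are subsets of $\RR \cup \{\infty\}$ then $\sup A \le \sup B$. Both are routine, so the proof is a one-line verification in each case.
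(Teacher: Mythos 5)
Your proof is correct and follows exactly the route the paper intends: the lemma is stated to ``follow directly from the definition,'' and unwinding the definitions to show the index set for the smaller supremum is contained in that for the larger one is precisely that. One small point: the paper's convention is $\sup\varnothing = -\infty$ (see, e.g., Example~\ref{ex.zeroahat}), not $0$, but since you hedged and this does not affect the containment argument, the proof goes through unchanged.
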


As an immediate consequence of \Cref{lem.estimate}, we obtain the following result.

\begin{corollary}
	\label{cor.Int}
	Let $\aa_\bullet$ and $\bb_\bullet$ be graded families of ideals in $S$. Then,
	\begin{enumerate}
		\item $\rho(\aa_\bullet, \bb_\bullet) \ge \rho(\aa_\bullet, \overline{\bb_\bullet})$ and $\rhat(\aa_\bullet, \bb_\bullet) \ge \rhat(\aa_\bullet, \overline{\bb_\bullet})$;
		\item $\rho(\aa_\bullet, \bb_\bullet) \le \rho(\overline{\aa_\bullet}, \bb_\bullet)$ and $\rhat(\aa_\bullet, \bb_\bullet) \le \rhat(\overline{\aa_\bullet}, \bb_\bullet)$.
	\end{enumerate}
\end{corollary}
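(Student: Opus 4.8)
The plan is to deduce both parts directly from \Cref{lem.estimate}, using the single observation that every ideal is contained in its integral closure. In the partial-order notation introduced just before that lemma, this says $\bb_\bullet \le \overline{\bb_\bullet}$ (because $\bb_i \subseteq \overline{\bb_i}$ for all $i \ge 1$) and $\aa_\bullet \le \overline{\aa_\bullet}$ (because $\aa_i \subseteq \overline{\aa_i}$ for all $i \ge 1$).

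Before invoking \Cref{lem.estimate} I would record that $\overline{\bb_\bullet}$ and $\overline{\aa_\bullet}$ are themselves graded families, so that the lemma applies to them. This uses the two standard properties of integral closure: monotonicity ($\cc \subseteq \dd$ implies $\overline{\cc} \subseteq \overline{\dd}$) and the containment $\overline{\cc}\cdot\overline{\dd} \subseteq \overline{\cc\dd}$. Combining these with $\bb_p\bb_q \subseteq \bb_{p+q}$ gives $\overline{\bb_p}\cdot\overline{\bb_q} \subseteq \overline{\bb_p\bb_q} \subseteq \overline{\bb_{p+q}}$, and the same argument works for $\aa_\bullet$.

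For part (1), apply \Cref{lem.estimate}(1) with $\bb_\bullet' := \overline{\bb_\bullet}$: from $\bb_\bullet \le \overline{\bb_\bullet}$ one obtains $\rho(\aa_\bullet,\bb_\bullet) \ge \rho(\aa_\bullet,\overline{\bb_\bullet})$ and $\rhat(\aa_\bullet,\bb_\bullet) \ge \rhat(\aa_\bullet,\overline{\bb_\bullet})$. For part (2), apply \Cref{lem.estimate}(2) with $\aa_\bullet' := \overline{\aa_\bullet}$: from $\aa_\bullet \le \overline{\aa_\bullet}$ one obtains $\rho(\aa_\bullet,\bb_\bullet) \le \rho(\overline{\aa_\bullet},\bb_\bullet)$ and $\rhat(\aa_\bullet,\bb_\bullet) \le \rhat(\overline{\aa_\bullet},\bb_\bullet)$.

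I do not expect any genuine obstacle: the only point that is not purely formal is the verification that the integral closure of a graded family is again a graded family, and that is immediate from the two cited properties of integral closure. Everything else is a direct application of the monotonicity already established in \Cref{lem.estimate}, so the corollary is indeed an immediate consequence of the lemma as asserted.
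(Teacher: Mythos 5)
Your proof is correct and follows exactly the route taken in the paper: deduce both parts from \Cref{lem.estimate} via the containments $\bb_\bullet \le \overline{\bb_\bullet}$ and $\aa_\bullet \le \overline{\aa_\bullet}$. The extra check that $\overline{\aa_\bullet}$ and $\overline{\bb_\bullet}$ are again graded families is a sensible addition the paper leaves implicit.
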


Throughout this section, $S$ is assumed to be a Noetherian domain. Let $K$ denote the quotient field of $S$.

\begin{definition}
A \emph{(discrete) valuation} of $K$ is a function $v : K \setminus \{0\} \rightarrow \ZZ$ satisfying the following properties for all $x, y \in K \setminus \{0\}$:
	\begin{enumerate}
		\item[(i)] $v(xy) = v(x) + v(y)$, and
		\item[(ii)] $v(x+y) \ge \min \{v(x), v(y)\}.$
	\end{enumerate}
\end{definition}

We shall omit the word ``discrete'' from ``discrete valuation'' in this paper, as no confusion will be resulted.
A valuation $v$ of $K$ is said to be \emph{supported} on $S$ if $v(x) \ge 0$ for all $x \in S \setminus \{0\}$.

Recall that for a valuation $v$ of $K$ and an ideal $I \subseteq S$, $v(I)$ denotes $\min \{v(x) ~\big|~ x \in I \setminus \{0\}\}.$ For a graded family $\aa_\bullet$ of ideals in $S$, the \emph{skew Waldschmidt constant} of $\aa_\bullet$ associated to $v$ is
$$\vhat(\aa_\bullet) = \lim_{n \rightarrow \infty} \dfrac{v(\aa_n)}{n} = \inf_{n \in \NN} \dfrac{v(\aa_n)}{n}.$$

Before stating our first result, we recall the definition of Rees valuations following \cite{SH2006}.

\begin{definition}[{\cite[Definition 10.1.1]{SH2006}}] \label{def.ReesVal}
	Let $I \subseteq S$ be an ideal. There exist finitely many valuation rings $V_1, \dots, V_r$ of $K$ satisfying the following properties:
	\begin{enumerate}
		\item[(a)] for each $i = 1, \dots, r$, $S \subseteq V_i \subseteq K$;
		\item[(b)] let $\phi_i : S \rightarrow V_i$ be the natural ring homomorphism;
		\item[(c)] for all $n \in \NN$, $\overline{I^n} = \bigcap_{i=1}^r \phi_i^{-1}(\phi_i(I^n)V_i)$; and
		\item[(d)] the set $\{V_1, \dots, V_r\}$ satisfying (c) is minimal possible.
	\end{enumerate}
	The valuation rings $V_1, \dots, V_r$ are called the \emph{Rees valuation rings} of $I$, and their corresponding valuations are called the \emph{Rees valuations} of $I$. We denote the set of Rees valuations of $I$ by $\RV(I)$. It is known that $\RV(I)$ is a finite set.
\end{definition}

Our first main result is a simple bound for the asymptotic resurgence $\rhat(\aa_\bullet, \overline{\bb_\bullet})$, which generalizes \cite[Theorem 1.2.1]{BH} from Waldschmidt constant to the skew Waldschmidt constant associated to any valuation of $K$ that is supported on $S$.

\begin{theorem} \label{low_bou}
	Let $\aa_\bullet$ and $\bb_\bullet$ be graded families of nonzero ideals in $S$. Let $v$ be a valuation of $K$, that is supported on $S$, such that $\vhat(\bb_\bullet)>0$. Then, $$\dfrac{\vhat(\bb_\bullet)}{\vhat(\aa_\bullet)} \leq \rhat(\aa_\bullet,\overline{\bb_\bullet}).$$  Moreover, if $\vhat(\aa_\bullet) =0$, then $\rhat(\aa_\bullet,\overline{\bb_\bullet})=\infty.$
\end{theorem}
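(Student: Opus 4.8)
The plan is to exploit the defining property of Rees valuations together with the skew Waldschmidt constant. Fix a valuation $v$ of $K$ supported on $S$ with $\vhat(\bb_\bullet) > 0$. The key observation is that for any ideal $J$ and any $x \in \overline{J}$, one has $v(x) \ge v(J)$; more precisely, for the filtration of integral closures, $v(\overline{\bb_d}) \ge v(\bb_d)$ since $v$ restricts to a valuation on the integral closure of $S$ in $K$ and integral elements over $\bb_d$ have value at least $v(\bb_d)$. I would record this as a preliminary lemma (or cite it from \cite{SH2006}). Consequently, if $\aa_s \subseteq \overline{\bb_d}$, then $v(\aa_s) \ge v(\overline{\bb_d}) \ge v(\bb_d)$, which contrapositively gives a containment-obstruction criterion: if $v(\aa_s) < v(\bb_d)$, then $\aa_s \not\subseteq \overline{\bb_d}$.

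First I would handle the case $\vhat(\aa_\bullet) = 0$. Here $\lim_n v(\aa_n)/n = \inf_n v(\aa_n)/n = 0$, so $v(\aa_n)/n$ is arbitrarily small; since $\vhat(\bb_\bullet) > 0$, we can choose, for any ratio $M > 0$, an $n$ with $v(\aa_{n}) < M \cdot \vhat(\bb_\bullet) \cdot n / 2$ say, and then for the pair $(s,r) = (n \cdot something, \ldots)$ arrange $\aa_{sn} \not\subseteq \overline{\bb_{rn}}$ for all large $n$ by comparing $v(\aa_{sn}) \le s n \cdot (v(\aa_{sn})/(sn))$ with $v(\bb_{rn}) \ge rn \cdot \vhat(\bb_\bullet)$. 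Making the ratio $s/r$ as large as we like forces $\rhat(\aa_\bullet, \overline{\bb_\bullet}) = \infty$. I would write this carefully using that $v(\aa_{sn}) \le n \, v(\aa_s)$ by superadditivity of $v(\aa_\bullet)$ (equivalently subadditivity used the right way), so that $v(\aa_{sn})/(sn) \to \vhat(\aa_\bullet)$ stays near $0$ while $v(\bb_{rn})/(rn) \to \vhat(\bb_\bullet) > 0$.

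For the main inequality, assume $\vhat(\aa_\bullet) > 0$ and set $q = \vhat(\bb_\bullet)/\vhat(\aa_\bullet)$, a positive real number. I want to show $\rhat(\aa_\bullet, \overline{\bb_\bullet}) \ge q$, i.e. that for every rational $s/r < q$ we have $\aa_{st} \not\subseteq \overline{\bb_{rt}}$ for $t \gg 1$. Given $s/r < q$, choose $\epsilon > 0$ with $\frac{s}{r} < \frac{\vhat(\bb_\bullet) - \epsilon}{\vhat(\aa_\bullet) + \epsilon}$, equivalently $s(\vhat(\aa_\bullet) + \epsilon) < r(\vhat(\bb_\bullet) - \epsilon)$. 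For $t$ large, $\frac{v(\aa_{st})}{st} \le \vhat(\aa_\bullet) + \epsilon$ is false in general (the sequence decreases to the infimum, so $v(\aa_{st})/(st) \ge \vhat(\aa_\bullet)$ always, but can be large); instead I use $\frac{v(\aa_{st})}{st} \le \frac{v(\aa_s)}{s}$... no — the clean route is: $v(\aa_{st}) \le t\, v(\aa_s)$, and I would first pass to a Veronese/fixed index where $v(\aa_s)/s$ is close to $\vhat(\aa_\bullet)$. So: pick $s_0$ with $v(\aa_{s_0})/s_0 < \vhat(\aa_\bullet) + \epsilon$, and restrict to $s$ a multiple of $s_0$ so that $v(\aa_{st}) \le (st/s_0) v(\aa_{s_0}) < st(\vhat(\aa_\bullet)+\epsilon)$. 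Meanwhile $v(\bb_{rt}) \ge rt \, \vhat(\bb_\bullet) > rt(\vhat(\bb_\bullet) - \epsilon)$. Then $v(\aa_{st}) < st(\vhat(\aa_\bullet)+\epsilon) < rt(\vhat(\bb_\bullet)-\epsilon) < v(\bb_{rt}) \le v(\overline{\bb_{rt}})$, wait the inequality direction for integral closure is $v(\overline{\bb_{rt}}) \ge v(\bb_{rt})$, which is exactly what lets us conclude $v(\aa_{st}) < v(\overline{\bb_{rt}})$, hence $\aa_{st} \not\subseteq \overline{\bb_{rt}}$ for all $t \gg 1$. Taking the supremum over such $s/r < q$ gives the bound.

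The main obstacle is the bookkeeping around which index is "large" and the need to work with multiples of a fixed $s_0$ so that $v(\aa_s)/s$ is genuinely close to the infimum $\vhat(\aa_\bullet)$ — one cannot simply say $v(\aa_{st})/(st) \to \vhat(\aa_\bullet)$ along all $t$ for fixed $s$, only along the subsequence of multiples. I would phrase the final conclusion in terms of $\sup\{s/r : s/r < q\} = q$ over rationals, so that even though each witnessing pair only achieves a ratio slightly below $q$, the supremum defining $\rhat$ reaches $q$. The one external input is the inequality $v(\overline{J}) \ge v(J)$ for a valuation supported on $S$, which is standard (every element of $\overline{J}$ satisfies an integral equation forcing its value to be at least $v(J)$), and I would either prove it in two lines or cite \cite[Chapter 6 or 10]{SH2006}.
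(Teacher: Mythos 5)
Your overall strategy matches the paper's: deduce $\aa_{st}\not\subseteq \overline{\bb_{rt}}$ from the strict inequality $v(\aa_{st}) < v(\bb_{rt})$ via the valuative criterion for integral closure (\cite[Theorem 6.8.3]{SH2006}), and then control the two sides by the skew Waldschmidt constants. However, you introduce an unnecessary detour based on a misconception. You claim that ``$v(\aa_{st})/(st) \le \vhat(\aa_\bullet) + \epsilon$ for $t$ large is false in general,'' which leads you to fix a good index $s_0$ and pass to multiples of it. In fact the simpler statement is true: for a fixed $s$, the sequence $\bigl\{v(\aa_{st})/(st)\bigr\}_{t\ge 1}$ is literally a subsequence of $\bigl\{v(\aa_n)/n\bigr\}_{n\ge 1}$ (it runs over the multiples of $s$), and since the ambient sequence converges to $\vhat(\aa_\bullet)$ by Fekete's lemma, so does the subsequence. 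Hence for every $s,r$ with $s/r < \vhat(\bb_\bullet)/\vhat(\aa_\bullet)$ and every $t\gg 1$ one gets $v(\aa_{st}) < st\bigl(\vhat(\aa_\bullet)+\epsilon\bigr) < rt\,\vhat(\bb_\bullet) \le v(\bb_{rt})$, exactly as the paper argues. (Also note that $v(\aa_n)/n$ need not be monotone decreasing; the infimum is only the limit.)

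Your workaround --- restricting $s$ to multiples of a well-chosen $s_0$ and then arguing that the achieved ratios still have supremum $\vhat(\bb_\bullet)/\vhat(\aa_\bullet)$ --- can be completed into a correct proof, but it requires additional bookkeeping that you sketch rather than carry out: you must re-choose $s_0$ as $\epsilon$ shrinks, and verify density of the ratios $s/r$ with $s\in s_0\NN$. Once the subsequence observation above is in hand, all of this evaporates and the argument becomes a short estimate valid for arbitrary $s,r$. Your treatment of the case $\vhat(\aa_\bullet)=0$ is in the right spirit but somewhat garbled; the cleanest formulation, used in the paper, is simply that the working hypothesis $\vhat(\aa_\bullet) < \vhat(\bb_\bullet)\,r/s$ then holds for \emph{every} pair $(s,r)$, so $\NN\subseteq\{s/r\mid \aa_{st}\not\subseteq\overline{\bb_{rt}}\text{ for }t\gg 1\}$ and the supremum is infinite.
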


\begin{proof}  Consider arbitrary $s,r \in \mathbb{N}$ with $\vhat(\aa_\bullet) < \vhat(\bb_\bullet) \dfrac{r}{s}.$ Choose $ \epsilon  >0$ such that $\vhat(\aa_\bullet) + \epsilon < \vhat(\bb_\bullet) \dfrac{r}{s}$. Note that $\left\{ \dfrac{v(\aa_{st})}{st} \right\}_{t \geq 1}$ is a subsequence of $\left\{ \dfrac{v(\aa_{n})}{n} \right\}_{n \geq 1}$. Therefore, $\lim_{t \to \infty} \dfrac{v(\aa_{st})}{st} =\vhat(\aa_\bullet)$. Thus, there exists $t_0 \in \mathbb{N}$ such that for all $t \geq t_0$,  $$\dfrac{v(\aa_{st})}{st} \leq \vhat(\aa_\bullet) + \epsilon < \vhat(\bb_\bullet) \dfrac{r}{s}.$$
	Therefore, for $t \geq t_0$, we have $${v(\aa_{st})} < rt\vhat(\bb_\bullet) \leq rt \dfrac{v(\bb_{rt})}{rt}=v(\bb_{rt}).$$
	It follows from \cite[Theorem 6.8.3]{SH2006} that $\aa_{st} \not\subseteq \overline{\bb_{rt}}$  for all $t \geq t_0$. It follows that
	$$\left\{ \dfrac{s}{r} ~\Big|~ s,r \in \mathbb{N} \text{ and } \vhat(\aa_\bullet) < \vhat(\bb_\bullet) \dfrac{r}{s}\right\} $$
	is a subset of
	$$\left\{ \dfrac{s}{r} ~\Big|~ s,r \in \mathbb{N} \text{ and } \aa_{st} \not\subseteq \overline{\bb_{rt}} \text{ for } t \gg 1\right\},$$
	and hence, $\dfrac{\vhat(\bb_\bullet)}{\vhat(\aa_\bullet)} \leq \rhat(\aa_\bullet,\overline{\bb_\bullet}).$ The second assertion follows from the fact that if $\vhat(\aa_\bullet)=0$, then $\mathbb{N} \subseteq \left\{ \dfrac{s}{r} ~\Big|~ s,r \in \mathbb{N} \text{ and } \aa_{st} \not\subseteq \overline{\bb_{rt}} \text{ for } t \gg 1\right\}$. This completes the theorem.
\end{proof}

The bound for $\rhat(\aa_\bullet, \overline{\bb_\bullet})$ in \Cref{low_bou} is sharp, as seen in the following example.

\begin{example}
Let $\aa_\bullet$ be a filtration of nonzero proper ideals in $S$ and let $\bb$ be an ideal of $S$. Take $\bb_i=\bb^i$ for all $i \ge 1$.  We first claim that $\vhat(\aa_\bullet)>0$ for all $v \in \text{RV}(\bb)$ if and only if $\rho(\aa_\bullet, \overline{\bb_\bullet})< \infty.$ Suppose that $\rho(\aa_\bullet, \overline{\bb_\bullet}) < \infty.$ Then, there exists a positive integer $n$ such that $\aa_{ni} \subseteq \overline{\bb^i}$ for all $i$.  Therefore, for any $v \in \text{RV}(\bb)$ and  for all $i$, $v(\aa_{ni}) \ge iv(\bb)$ which implies that $\dfrac{v(\aa_{ni})}{ni} \ge \dfrac{v(\bb)}{n}$. Consequently, $\vhat(\aa_\bullet)>0$ for all $v \in \text{RV}(\bb)$.

Conversely, we assume that $\vhat(\aa_\bullet)>0$ for all $v \in \text{RV}(\bb)$. For each $v \in \text{RV}(\bb)$, choose $n_v \in \NN$ such that $\vhat(\aa_\bullet) \ge \dfrac{v(\bb)}{n_v}.$ Take $n=\max\{n_v \mid v\in \text{RV}(\bb)\}.$ Thus,  for all $v \in \text{RV}(\bb)$, $\vhat(\aa_\bullet) \ge \dfrac{v(\bb)}{n}$ which implies that $\dfrac{v(\aa_{ni})}{ni} \ge \vhat(\aa_\bullet) \ge \dfrac{v(\bb)}{n}$. Consequently, $v(\aa_{ni})\ge v(\bb^i)$ for all $i$ and for all $v\in \text{RV}(\bb).$ Now, by \cite[Theorem 6.8.3]{SH2006}, we get $\aa_{ni}\subseteq \overline{\bb^i}$ for all $i$. Hence, $\rho(\aa_\bullet,\overline{\bb_\bullet})<\infty.$ The claim follows.

To continue, let $s,r  \in \NN$ be such that $\dfrac{s}{r} \ge \max\limits_{v \in \text{RV}(\bb)} \left\{\dfrac{v(\bb)}{\vhat(\aa_\bullet)}\right\}.$ Then for all $v \in \text{RV}(\bb)$, we have $v(\aa_s) \ge s \vhat(\aa_\bullet) \ge v(\bb^r)$. This and \cite[Theorem 6.8.3]{SH2006} imply that $\aa_s \subseteq \overline{\bb^r}$. Therefore, $\rho(\aa_\bullet, \overline{\bb_\bullet}) \le \max\limits_{v \in \text{RV}(\bb)} \left\{\dfrac{v(\bb)}{\vhat(\aa_\bullet)}\right\}$ and, hence,
$$\rhat(\aa_\bullet, \overline{\bb_\bullet}) = \rho(\aa_\bullet, \overline{\bb_\bullet}) = \max_{v \in \text{RV}(\bb)} \left\{\dfrac{v(\bb)}{\vhat(\aa_\bullet)}\right\}.$$
Furthermore, if $\bb$ is normal ideal, i.e., $\bb^i=\overline{\bb^i}$ for all $i$, then $\rhat(\aa_\bullet, \overline{\bb_\bullet}) = \rhat(\aa_\bullet, \bb_\bullet) =\rho(\aa_\bullet, \bb_\bullet) =\rho(\aa_\bullet,\overline{ \bb_\bullet}).$
 \end{example}

\begin{corollary}\label{cor.ReesVal}
	Let $\aa_\bullet$ and $\bb_\bullet$ be graded families of nonzero ideals  in $S$.   Then,
	$$\rhat(\aa_\bullet,\overline{\bb_\bullet}) \ge \sup_{\vhat(\bb_\bullet) > 0} \left\{ \dfrac{\vhat(\bb_\bullet)}{\vhat(\aa_\bullet)} \right\} ,$$
	where the supremum is taken over all valuations $v$ of $K$ supported on $S$ for which $\vhat(\bb_\bullet)>0$.
\end{corollary}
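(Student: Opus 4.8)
The plan is to read this off directly from \Cref{low_bou}. Fix an arbitrary valuation $v$ of $K$ that is supported on $S$ and satisfies $\vhat(\bb_\bullet) > 0$; this is precisely the indexing condition in the supremum. Applying \Cref{low_bou} to this $v$ gives $\dfrac{\vhat(\bb_\bullet)}{\vhat(\aa_\bullet)} \le \rhat(\aa_\bullet, \overline{\bb_\bullet})$ in the case $\vhat(\aa_\bullet) > 0$, while in the case $\vhat(\aa_\bullet) = 0$ the same theorem yields $\rhat(\aa_\bullet, \overline{\bb_\bullet}) = \infty$, so that under the usual convention $c/0 = +\infty$ for $c > 0$ the inequality $\vhat(\bb_\bullet)/\vhat(\aa_\bullet) \le \rhat(\aa_\bullet, \overline{\bb_\bullet})$ holds in this case as well. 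Thus $\rhat(\aa_\bullet, \overline{\bb_\bullet})$ is an upper bound for the value $\vhat(\bb_\bullet)/\vhat(\aa_\bullet)$ attached to every $v$ in the indexing set.

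Since $\rhat(\aa_\bullet, \overline{\bb_\bullet})$ is a single quantity bounding each member of the family $\left\{ \vhat(\bb_\bullet)/\vhat(\aa_\bullet) : v \text{ supported on } S,\ \vhat(\bb_\bullet) > 0 \right\}$ from above, it also bounds the supremum of that family, which is exactly the claimed inequality. There is essentially no obstacle here, as the statement is an immediate corollary; the only degenerate case worth noting is when no valuation of $K$ supported on $S$ has $\vhat(\bb_\bullet) > 0$, in which case the supremum is taken over the empty set and the asserted inequality is vacuous.
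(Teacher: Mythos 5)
Your proposal is correct and takes essentially the same approach as the paper, which simply invokes \Cref{low_bou} and notes the conclusion is immediate; you have just spelled out the routine passage from a uniform bound on each term to a bound on the supremum, together with the $\vhat(\aa_\bullet)=0$ convention, which is a faithful unpacking of the paper's one-line proof.
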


\begin{proof} The assertion follows immediately from \Cref{low_bou}.
\end{proof}

Note that the Waldschmidt constant of a graded family of homogeneous ideals in a graded ring  is a special case of the skew Waldschmidt constant when the valuation of an element is given by the degree of that element. As an immediate consequence of \Cref{low_bou}, we obtain the following generalization of \cite[Theorem 1.2.1]{BH}.

\begin{corollary}[{See \cite[Theorem 1.2.1]{BH}}] \label{cor.boundsW}
	Let $S$ be a polynomial ring, and let $\aa_\bullet $ and $\bb_\bullet $ be graded families of nonzero homogeneous ideals in $S$. Suppose that $\ahat(\bb_\bullet) >0$. Then, $$\dfrac{\ahat(\bb_\bullet)}{\ahat(\aa_\bullet)} \leq \rhat(\aa_\bullet,\overline{\bb_\bullet}) \leq \rhat(\aa_\bullet,\bb_\bullet).$$ Moreover, if $\ahat(\aa_\bullet) =0$, then $\rhat(\aa_\bullet, \overline{\bb_\bullet})=\rhat(\aa_\bullet,\bb_\bullet) =\infty.$
\end{corollary}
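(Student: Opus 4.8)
The plan is to deduce \Cref{cor.boundsW} as a direct specialization of \Cref{low_bou}, applied to the valuation $v = \deg$ on the polynomial ring $S = \kk[x_1,\dots,x_n]$, whose quotient field is $K = \kk(x_1,\dots,x_n)$. The first step is to observe that for a homogeneous ideal $I$, $v(I) = \min\{\deg(f) \mid f \in I \setminus \{0\}\}$ is exactly the initial degree $\alpha(I)$ used to define the Waldschmidt constant, so that the skew Waldschmidt constant $\vhat(\aa_\bullet)$ coincides with $\ahat(\aa_\bullet)$, and similarly for $\bb_\bullet$; this is just the remark already made in the paragraph preceding the corollary, but it should be stated explicitly. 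One small point to check: $\deg$ is a nonnegative-integer-valued valuation of $K$ in the sense of the paper's definition (additive on products, and $\deg(f+g) \ge \min\{\deg f, \deg g\}$ for nonzero $f,g$ with $f+g \ne 0$, interpreting degree of a ratio as the difference of degrees), and it is supported on $S$ since $\deg(f) \ge 0$ for all nonzero polynomials $f$. Strictly speaking $\deg$ is a $\ZZ$-valued valuation on $K$ even though it is $\NN$-valued on $S$, which is exactly the setting of \Cref{low_bou}.

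Granting this identification, \Cref{low_bou} applied with $v = \deg$ gives, under the hypothesis $\ahat(\bb_\bullet) = \vhat(\bb_\bullet) > 0$, the inequality
$$\frac{\ahat(\bb_\bullet)}{\ahat(\aa_\bullet)} = \frac{\vhat(\bb_\bullet)}{\vhat(\aa_\bullet)} \le \rhat(\aa_\bullet, \overline{\bb_\bullet}),$$
and likewise the second assertion: if $\ahat(\aa_\bullet) = \vhat(\aa_\bullet) = 0$, then $\rhat(\aa_\bullet, \overline{\bb_\bullet}) = \infty$. The remaining inequality $\rhat(\aa_\bullet, \overline{\bb_\bullet}) \le \rhat(\aa_\bullet, \bb_\bullet)$ is immediate from \Cref{cor.Int}(1), since $\overline{\bb_\bullet} \ge \bb_\bullet$ (each $\bb_i \subseteq \overline{\bb_i}$), so that $\rhat(\aa_\bullet, \bb_\bullet) \ge \rhat(\aa_\bullet, \overline{\bb_\bullet})$. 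Chaining these gives $\ahat(\bb_\bullet)/\ahat(\aa_\bullet) \le \rhat(\aa_\bullet, \overline{\bb_\bullet}) \le \rhat(\aa_\bullet, \bb_\bullet)$, and in the case $\ahat(\aa_\bullet) = 0$ both asymptotic resurgence numbers are forced to be $\infty$ by the same monotonicity.

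There is essentially no obstacle here: the only thing that requires a sentence of care is confirming that the degree function genuinely satisfies the axioms of a (discrete) valuation of $K$ supported on $S$, so that \Cref{low_bou} and \Cref{cor.Int} both apply. Everything else is a one-line substitution plus invoking monotonicity of $\rhat$ in its second argument. I would write the proof in three short sentences: (i) $\deg$ is a valuation of $K$ supported on $S$ and $\vhat = \ahat$ for homogeneous families; (ii) apply \Cref{low_bou}; (iii) apply \Cref{cor.Int}(1) for the upper bound and for the $\ahat(\aa_\bullet)=0$ case.
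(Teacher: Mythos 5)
Your proof takes exactly the route the paper intends: recognize the Waldschmidt constant as a skew Waldschmidt constant for a suitable valuation, apply \Cref{low_bou} to get the lower bound $\ahat(\bb_\bullet)/\ahat(\aa_\bullet)\le\rhat(\aa_\bullet,\overline{\bb_\bullet})$ together with the $\ahat(\aa_\bullet)=0$ case, and use \Cref{cor.Int}(1) for $\rhat(\aa_\bullet,\overline{\bb_\bullet})\le\rhat(\aa_\bullet,\bb_\bullet)$. That is also the paper's own (one-line, unstated) argument.

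However, one assertion in your verification is false as written: the top-degree function $\deg$ on $S=\kk[x_1,\dots,x_n]$, extended to $K$ by $\deg(f/g)=\deg f-\deg g$, does \emph{not} satisfy $\deg(f+g)\ge\min\{\deg f,\deg g\}$. For instance, with $f=x_1$ and $g=1-x_1$ one gets $\deg(f+g)=\deg 1=0<1=\min\{\deg f,\deg g\}$, so $\deg$ is not a discrete valuation of $K$ in the paper's sense (its negative $-\deg$ is a valuation, but that one is not supported on $S$). The valuation you actually want is the $\mm$-adic \emph{order} valuation at the irrelevant maximal ideal $\mm=(x_1,\dots,x_n)$: for a nonzero $f=\sum_d f_d$ with $f_d$ homogeneous of degree $d$, set $v(f)=\min\{d:f_d\ne 0\}$, and extend to $K$ multiplicatively. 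This $v$ is a genuine discrete valuation of $K$, it is supported on $S$, it coincides with $\deg$ on nonzero homogeneous elements, and for a nonzero homogeneous ideal $I$ one still has $v(I)=\alpha(I)$ because the lowest nonzero homogeneous component of any $f\in I$ lies again in $I$. With this single correction (order, not top degree) in your step (i), the rest of your argument is sound and matches the paper exactly.
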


\Cref{low_bou} and \Cref{cor.boundsW} are not necessarily true without the condition that $\vhat(\bb_\bullet) > 0$, as illustrated in the following example.

\begin{example}\label{ex.low}
	\label{ex.zeroahat}
	Let $I$ be a nonzero proper normal ideal in $S$, and let $v$ be a valuation of $K$ that is supported on $S$ and $v(I) > 0$.
	\begin{enumerate}
		\item Consider the filtration $\aa_\bullet $ and $\bb_\bullet $ with
		$$\aa_i = I^i \text{ and } \bb_i = I \text{ for all } i \ge 1.$$
		Clearly, $\vhat(\aa_\bullet) = v(I)$ and $\vhat(\bb_\bullet) = 0.$ It can be seen that, in this example,
		$$\rho(\aa_\bullet,\bb_\bullet)=\rhat(\aa_\bullet,\bb_\bullet)=\rhat(\aa_\bullet, \overline{\bb_\bullet}) =\sup \varnothing =-\infty < \dfrac{\vhat(\bb_\bullet)}{\vhat(\aa_\bullet)}.$$
		\item Consider slightly modified families $\aa_\bullet$ and $\bb_\bullet $ with $\aa_i = I^i$ and
		$$\bb_i = \left\{ \begin{array}{lll} I & \text{if} & i \not= 2 \\ I^2 & \text{if} & i = 2.\end{array}\right.$$
		Then, $\vhat(\aa_\bullet) = v(I)$ and $\vhat(\bb_\bullet) = 0.$ It can be seen that, in this example,
		$$\rho(\aa_\bullet,\bb_\bullet)=\dfrac{1}{2} \text{ and } \rhat(\aa_\bullet,\bb_\bullet) = \rhat(\aa_\bullet, \overline{\bb_\bullet}) =-\infty < \dfrac{\vhat(\bb_\bullet)}{\vhat(\aa_\bullet)}.$$
		\item Consider the filtration $\aa_\bullet $ and $\bb_\bullet $ with $$\aa_i =I^i \text{ and } \bb_i = I^{\lceil{\sqrt i}\rceil} \text{ for all } i \ge 1.$$
Observe that $v(\bb_i) =\lceil \sqrt{i} \rceil  v(I)$ for all $i \ge 1$.
		Therefore, $\vhat(\aa_\bullet) = v(I)$ and $\vhat(\bb_\bullet) = 0.$

We claim that
		$$\rho(\aa_\bullet,\bb_\bullet)=\dfrac{1}{2} \text{ and } \rhat(\aa_\bullet,\bb_\bullet) = \rhat(\aa_\bullet, \overline{\bb_\bullet}) =-\infty < \dfrac{\vhat(\bb_\bullet)}{\vhat(\aa_\bullet)}.$$
Indeed, let $s,r$ be positive integers such that $r<2s$. Then, $s^2\ge r$ which implies that $s\ge \lceil \sqrt r \rceil.$ This implies that $\aa_s \subseteq \bb_r,$ and so,  $\rho(\aa_\bullet,\bb_\bullet)\le \dfrac{1}{2}.$ Furthermore, since $\aa_1 =I \not\subseteq I^2=\bb_2$, we have $\rho(\aa_\bullet,\bb_\bullet)=\dfrac{1}{2}.$

Also, if there exist positive integers $s,r$ such that $\aa_{st}\not\subseteq \bb_{rt}$ for $t \gg 1$, then $st < \lceil \sqrt{rt} \rceil$ for $t\gg 1$. Replacing $t$ by $rt^2$, we get $st<1$ for $t\gg 1$, which is a contradiction. Hence,  $\rhat(\aa_\bullet,\bb_\bullet) = -\infty.$

\item Consider the filtration $\aa_\bullet $ and $\bb_\bullet $ with $$\aa_i = \bb_i = I^{\lceil{\sqrt i}\rceil} \text{ for all } i \ge 1.$$
Observe that $v(\aa_i)=v(\bb_i) =\lceil \sqrt{i} \rceil  v(I)$ for all $i \ge 1$.
		Therefore, $\vhat(\aa_\bullet) =\vhat(\bb_\bullet) = 0.$
We claim that
		$$\rho(\aa_\bullet,\bb_\bullet)= \rhat(\aa_\bullet,\bb_\bullet)=\rhat(\aa_\bullet, \overline{\bb_\bullet})  = 1.$$
Indeed, let $s,r$ be positive integers such that $s \ge r$. Then,  $\lceil \sqrt s \rceil \ge \lceil \sqrt r \rceil$,  which implies that $\aa_s \subseteq \bb_r,$ and so,  $\rho(\aa_\bullet,\bb_\bullet)\le 1.$ Also, observe that for any $s$, $\lceil \sqrt{st} \rceil > \lceil \sqrt{(s-1)t} \rceil$ for $t\gg 1$.  Therefore,  for any $s$, $\aa_{(s-1)t}\not\subseteq \bb_{st}$ for $t \gg 1$. This implies that $\dfrac{s-1}{s} \le \rhat(\aa_\bullet, \bb_\bullet)$ for all $s$. Hence,  $\rhat(\aa_\bullet,\bb_\bullet) = \rho(\aa_\bullet, \bb_\bullet)=1.$
	\end{enumerate}
\end{example}

The next lemma examines the condition that $\vhat(\bb_\bullet) > 0$ in \Cref{low_bou}.

\begin{lemma}\label{supported}
	Let $\bb_\bullet$ be a graded family of ideals in $S$ such that $\R^{[k]}(\bb_\bullet)$ is a standard graded $S$-algebra. Then,  $\vhat(\bb_\bullet) =\dfrac{v(\bb_k)}{k}$ for any valuation $v$ of $K$. In particular, $\vhat(\bb_\bullet) >0$ if and only if $v(\bb_k) > 0$.
\end{lemma}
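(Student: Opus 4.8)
The plan is to first unwind the hypothesis that $\R^{[k]}(\bb_\bullet)$ is a standard graded $S$-algebra into a concrete identity among the ideals $\bb_{kn}$, then invoke multiplicativity of the valuation on ideals together with the convergence (Fekete's lemma) already recalled before the definition of the skew Waldschmidt constant. Throughout I may harmlessly assume the $\bb_i$ are nonzero, so that $v(\bb_i)$ is a well-defined integer; otherwise the statement is degenerate.

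First I would observe that, regrading $\R^{[k]}(\bb_\bullet) = \bigoplus_{n \ge 0} \bb_{kn} t^{kn}$ by declaring $t^k$ to have degree $1$, being a standard graded $S$-algebra means being generated over $S = \R^{[k]}(\bb_\bullet)_0$ by its degree-one piece $\bb_k t^k$; that is, $(\bb_k t^k)^n = \bb_{kn} t^{kn}$ for all $n \ge 0$, or equivalently
\[
\bb_{kn} = \bb_k^{\,n} \qquad \text{for all } n \ge 0,
\]
with the usual convention $\bb_0 = S = \bb_k^{0}$.

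Next I would record the elementary fact that for a valuation $v$ of $K$ and nonzero ideals $I, J \subseteq S$ one has $v(IJ) = v(I) + v(J)$: the inequality $v(IJ) \ge v(I) + v(J)$ follows from $v\!\left(\sum_i x_i y_i\right) \ge \min_i\bigl(v(x_i) + v(y_i)\bigr) \ge v(I) + v(J)$, while choosing $x_0 \in I$ and $y_0 \in J$ with $v(x_0) = v(I)$ and $v(y_0) = v(J)$ gives $x_0 y_0 \in IJ \setminus \{0\}$ with $v(x_0 y_0) = v(I) + v(J)$. Iterating this yields $v(\bb_k^{\,n}) = n\, v(\bb_k)$, so combining with the previous step,
\[
\frac{v(\bb_{kn})}{kn} = \frac{n\, v(\bb_k)}{kn} = \frac{v(\bb_k)}{k} \qquad \text{for all } n \ge 1.
\]

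Finally, since $\{v(\bb_{kn})/(kn)\}_{n \ge 1}$ is a subsequence of the sequence $\{v(\bb_n)/n\}_{n \ge 1}$, which converges to $\vhat(\bb_\bullet)$ by Fekete's lemma, the subsequence also converges to $\vhat(\bb_\bullet)$; but it is constant with value $v(\bb_k)/k$, whence $\vhat(\bb_\bullet) = v(\bb_k)/k$. The ``in particular'' assertion is then immediate because $k \ge 1$. I do not expect a genuine obstacle: the only point needing care is the translation of ``standard graded $S$-algebra'' into the identity $\bb_{kn} = \bb_k^{\,n}$, which rests on the standard description of standard graded algebras as those generated in degree one after the appropriate regrading.
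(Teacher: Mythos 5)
Your proof is correct and follows essentially the same route as the paper's: translate standard gradedness into $\bb_{kn} = \bb_k^n$, use $v(\bb_k^n) = n\,v(\bb_k)$, and conclude via the subsequence argument with Fekete's lemma. You simply spell out in more detail the two ingredients the paper takes as known (the product formula $v(IJ) = v(I) + v(J)$ and the meaning of ``standard graded'' after regrading), but there is no difference in approach.
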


\begin{proof}
	Since $\R^{[k]}(\bb_\bullet)$ is a standard graded $S$-algebra, $\bb_{ks}=\bb_k^s$ for all $s \ge 1$.  Therefore, $v(\bb_{ks}) =s v(\bb_k)$ for all $s \in \mathbb{N}.$ Since $\left\{\dfrac{v(\bb_{ks})}{ks}\right\}$ is a subsequence of $\left\{\dfrac{v(\bb_s)}{s}\right\},$ $\vhat(\bb_\bullet) = \displaystyle \lim_{s \to \infty} \dfrac{v(\bb_{ks})}{ks}= \dfrac{v(\bb_{k})}{k}.$
\end{proof}

We are now ready to present our next main result of this section, which generalizes \cite[Theorem 4.10]{DFMS}. Observe that following \cite[Chapter 10]{SH2006}, for any ideal $I \subseteq S$ and any Rees valuation $v \in \RV(I)$ of $I$, we have that $v(I) > 0$.

\begin{theorem}
	\label{thm.ReesVal}
	Let $\aa_\bullet$ and $\bb_\bullet$ be graded families of nonzero ideals in $S$, and let $k \in \NN$. Suppose that $\R^{[k]}(\bb_\bullet)$ is a standard graded $S$-algebra. Then,
	$$\rhat(\aa_\bullet,\overline{\bb_\bullet}) =\max_{v \in \RV(\bb_k)} \left\{ \dfrac{\vhat(\bb_\bullet)}{\vhat(\aa_\bullet)} \right\} =\sup_{v(\bb_k) > 0} \left\{ \dfrac{\vhat(\bb_\bullet)}{\vhat(\aa_\bullet)} \right\} ,$$
	where the supremum is taken over all valuations of $K$ supported on $S$ that take positive values in $\bb_k$. Particularly, if $\R^{[\ell]}(\aa_\bullet)$ is a standard graded $S$-algebra for some $\ell$, then $\rhat(\aa_\bullet, \overline{\bb_\bullet})$ is either a positive rational number or infinite.
\end{theorem}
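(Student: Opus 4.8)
Let $\aa_\bullet$ and $\bb_\bullet$ be graded families of nonzero ideals in $S$, and let $k \in \NN$. Suppose that $\R^{[k]}(\bb_\bullet)$ is a standard graded $S$-algebra. Then,
$$\rhat(\aa_\bullet,\overline{\bb_\bullet}) =\max_{v \in \RV(\bb_k)} \left\{ \dfrac{\vhat(\bb_\bullet)}{\vhat(\aa_\bullet)} \right\} =\sup_{v(\bb_k) > 0} \left\{ \dfrac{\vhat(\bb_\bullet)}{\vhat(\aa_\bullet)} \right\},$$
where the supremum is taken over all valuations of $K$ supported on $S$ that take positive values in $\bb_k$. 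Particularly, if $\R^{[\ell]}(\aa_\bullet)$ is a standard graded $S$-algebra for some $\ell$, then $\rhat(\aa_\bullet, \overline{\bb_\bullet})$ is either a positive rational number or infinite.

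The plan is to sandwich $\rhat(\aa_\bullet,\overline{\bb_\bullet})$ between $\max_{v\in\RV(\bb_k)}\vhat(\bb_\bullet)/\vhat(\aa_\bullet)$ from below and the very same quantity from above, and to observe along the way that the intermediate supremum over all valuations $v$ of $K$ with $v(\bb_k)>0$ is automatically trapped between the two, hence equals both. Throughout, a ratio with zero denominator is read as $+\infty$, consistently with \Cref{low_bou}; recall also that every $v\in\RV(\bb_k)$ is supported on $S$ (since $S\subseteq V_i$) and satisfies $v(\bb_k)>0$.

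For the lower bound, \Cref{cor.ReesVal} gives $\rhat(\aa_\bullet,\overline{\bb_\bullet})\ge\sup_{\vhat(\bb_\bullet)>0}\vhat(\bb_\bullet)/\vhat(\aa_\bullet)$, the supremum ranging over valuations of $K$ supported on $S$ with $\vhat(\bb_\bullet)>0$. Since $\R^{[k]}(\bb_\bullet)$ is standard graded, \Cref{supported} identifies $\vhat(\bb_\bullet)=v(\bb_k)/k$, so the range of this supremum is exactly $\{v: v(\bb_k)>0\}$; and each Rees valuation of $\bb_k$ lies in that range. Hence $\rhat(\aa_\bullet,\overline{\bb_\bullet})\ge\sup_{v(\bb_k)>0}\vhat(\bb_\bullet)/\vhat(\aa_\bullet)\ge\max_{v\in\RV(\bb_k)}\vhat(\bb_\bullet)/\vhat(\aa_\bullet)$.

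The core of the argument is the upper bound $\rhat(\aa_\bullet,\overline{\bb_\bullet})\le\max_{v\in\RV(\bb_k)}\vhat(\bb_\bullet)/\vhat(\aa_\bullet)$. Fix $s,r\in\NN$ with $\aa_{st}\not\subseteq\overline{\bb_{rt}}$ for $t\gg1$; it suffices to bound $s/r$ by the right-hand side. Restricting to $t=kt'$ and using $\bb_{km}=\bb_k^m$, we get $\aa_{skt'}\not\subseteq\overline{\bb_{rkt'}}=\overline{\bb_k^{rt'}}$ for $t'\gg1$. Reformulating \Cref{def.ReesVal}, $\overline{\bb_k^{n}}=\{x\in S: v(x)\ge n\,v(\bb_k)\text{ for every }v\in\RV(\bb_k)\}$, so for each such $t'$ there is some $v\in\RV(\bb_k)$ with $v(\aa_{skt'})<rt'\,v(\bb_k)$. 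As $\RV(\bb_k)$ is finite, a single $v_0\in\RV(\bb_k)$ serves infinitely many $t'$; along that subsequence $v_0(\aa_{skt'})/(skt')<r\,v_0(\bb_k)/(sk)$, and since $\{v_0(\aa_n)/n\}_n$ converges to $\vhat(\aa_\bullet)$ so does this subsequence, so letting $t'\to\infty$ yields $\vhat(\aa_\bullet)\le(r/s)\,v_0(\bb_k)/k=(r/s)\,\vhat(\bb_\bullet)$ (using \Cref{supported} once more). Thus $s/r\le\vhat(\bb_\bullet)/\vhat(\aa_\bullet)\le\max_{v\in\RV(\bb_k)}\vhat(\bb_\bullet)/\vhat(\aa_\bullet)$, as required; combined with the lower bound this forces all three displayed quantities to coincide. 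I expect this step to be the main obstacle: it is precisely here that standard-gradedness of $\R^{[k]}(\bb_\bullet)$ is indispensable (to convert the member $\bb_{rkt'}$ into a power $\bb_k^{rt'}$ and bring in the Rees valuations of the single ideal $\bb_k$), after which the work is a pigeonhole over the finite set $\RV(\bb_k)$ plus a subsequence limit.

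For the final assertion, if in addition $\R^{[\ell]}(\aa_\bullet)$ is standard graded, then \Cref{supported} applied to $\aa_\bullet$ gives $\vhat(\aa_\bullet)=v(\aa_\ell)/\ell$ for every valuation $v$. Hence for each $v\in\RV(\bb_k)$ the quantity $\vhat(\bb_\bullet)/\vhat(\aa_\bullet)=\ell\,v(\bb_k)/(k\,v(\aa_\ell))$ is a positive rational number or $+\infty$, since $v(\bb_k)$ and $v(\aa_\ell)$ are nonnegative integers with $v(\bb_k)>0$. As $\RV(\bb_k)$ is finite, its maximum over $v\in\RV(\bb_k)$, which by the above equals $\rhat(\aa_\bullet,\overline{\bb_\bullet})$, is therefore a positive rational number or $+\infty$.
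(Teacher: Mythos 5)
Your proof is correct and follows essentially the same strategy as the paper's: lower bound via \Cref{low_bou} together with \Cref{supported}, upper bound by converting to powers of $\bb_k$ and invoking the Rees valuation description of $\overline{\bb_k^{n}}$, and the rationality statement from \Cref{supported} applied to $\aa_\bullet$. The only substantive difference is at the final step of the upper bound: you apply a pigeonhole over the finite set $\RV(\bb_k)$ and pass to a subsequence limit to handle the fact that the witnessing valuation depends on $t'$, while the paper avoids both by noting $\widehat{w}(\aa_\bullet)\le w(\aa_{n})/n$ for every $n$, which lets a single value of $t$ close the contradiction immediately; your version is slightly longer but equally valid.
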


\begin{proof}  Since $\R^{[k]}(\bb_\bullet)$ is a standard graded $S$-algebra,  $\bb_{kt} = \bb_k^t$ for all $t \ge 1$. By Lemma \ref{supported}, for any valuation $v$ of $K$ supported on $S$, $\vhat(\bb_\bullet) > 0$ if and only if $v(\bb_k)>0$. Consider any valuation $v \in \RV(\bb_k)$. As remarked above, $v(\bb_k) > 0$.  By \Cref{low_bou}, we then get
	$$\max_{v \in \RV(\bb_k)} \left\{ \dfrac{\vhat(\bb_\bullet)}{\vhat(\aa_\bullet)} \right\} \le \sup\limits_{v(\bb_k) > 0}  \left\{ \dfrac{\vhat(\bb_\bullet)}{\vhat(\aa_\bullet)} \right\} \leq \rhat(\aa_\bullet,\overline{\bb_\bullet}).$$
	Thus, it suffices to prove that
	$$\rhat(\aa_\bullet,\overline{\bb_\bullet}) \leq \max_{v \in \RV(\bb_k)} \left\{ \dfrac{\vhat(\bb_\bullet)}{\vhat(\aa_\bullet)} \right\}.$$

	If for some $v \in \RV(\bb_k)$, $\vhat(\aa_\bullet) =0$, then by \Cref{low_bou},
$\rhat(\aa_\bullet,\overline{\bb_\bullet}) =\infty= \max\limits_{v \in \RV(\bb_k)} \left\{ \dfrac{\vhat(\bb_\bullet)}{\vhat(\aa_\bullet)} \right\} .$
So, we assume that 	$\vhat(\aa_\bullet) > 0$ for all $v \in \RV(\bb_k)$.
	Suppose, by contradiction, that
	$$\rhat(\aa_\bullet,\overline{\bb_\bullet}) > \max_{v \in \RV(\bb_k)} \left\{ \dfrac{\vhat(\bb_\bullet)}{\vhat(\aa_\bullet)} \right\}.$$
 By definition, there exist $s,r \in \NN$ such that
	$\rhat(\aa_\bullet, \overline{\bb_\bullet}) \ge \dfrac{s}{r} > \max\limits_{v \in \RV(\bb_k)} \left\{ \dfrac{\vhat(\bb_\bullet)}{\vhat(\aa_\bullet)}\right\},$
	and $\aa_{st} \not\subseteq \overline{\bb_{rt}}$ for $t \gg 1$.
Since $\left\{\dfrac{v(\bb_{kt})}{kt}\right\}$ is a subsequence of $\left\{\dfrac{v(\bb_n)}{n}\right\},$ we get $\vhat(\bb_\bullet) = \displaystyle \lim_{t \to \infty} \dfrac{v(\bb_{kt})}{kt}= \dfrac{v(\bb_{k})}{k}=\dfrac{v(\bb_{kt})}{kt}$ for all $t \geq 1$. Furthermore, $\RV(\bb_{kt}) = \RV(\bb_k)$ for any $t \ge 1$ (cf. \cite[Exercise 10.1]{SH2006}).
 Now, for $p \gg 1$, we have $\aa_{sk^p} \not\subseteq \overline{\bb_{rk^p}}.$
	This, by \cite[Theorem 6.8.3 and Chapter 10]{SH2006}, implies that for some $w \in \RV(\bb_{rk^p}) = \RV(\bb_k)$ ($w$ depends on $p$), we have $w(\aa_{sk^p}) <w({\bb_{rk^p}}).$
	Therefore,
	\begin{align*}
		\dfrac{s}{r} & > \dfrac{\widehat{w}(\bb_\bullet)}{\widehat{w}(\aa_\bullet)} = \dfrac{w(\bb_{rk^p})}{rk^p \widehat{w}(\aa_\bullet)}
		\geq \dfrac{w(\bb_{rk^p})}{rk^p \dfrac{w(\aa_{sk^p})}{sk^p}} \\
		& = \dfrac{s}{r}  \dfrac{w(\bb_{rk^p})}{w(\aa_{sk^p})} > \dfrac{s}{r},
	\end{align*}
	which is a contradiction. Thus, $\rhat(\aa_\bullet,\overline{\bb_\bullet}) \leq \max\limits_{v \in \RV(\bb_k)} \left\{ \dfrac{\vhat(\bb_\bullet)}{\vhat(\aa_\bullet)} \right\}.$
	Hence, the first assertion follows.

Since $\R^{[\ell]}(\aa_\bullet)$ and $\R^{[k]}(\bb_\bullet)$ are standard graded $S$-algebras, by Lemma \ref{supported}, $\vhat(\aa_\bullet)$ is a non-negative rational number and  $\vhat(\bb_\bullet)$ is a positive rational numbers for any $v \in \RV(\bb_k)$. The last statement follows from the fact that there are finitely many Rees valuations for any ideal $\bb_k$.
\end{proof}

As an immediate consequence of \Cref{thm.ReesVal}, we obtain the following result.

\begin{corollary}
	\label{cor.NoethRees}
	Let $\aa_\bullet$ and $\bb_\bullet$ be graded families of nonzero ideals in $S$. Suppose that the Rees algebra $\R(\bb_\bullet)$ is Noetherian. Then, there exists an integer $k$ such that
	$$\rhat(\aa_\bullet,\overline{\bb_\bullet}) =\max_{v \in \RV(\bb_k)} \left\{ \dfrac{\vhat(\bb_\bullet)}{\vhat(\aa_\bullet)} \right\} =\sup_{v(\bb_k) > 0} \left\{ \dfrac{\vhat(\bb_\bullet)}{\vhat(\aa_\bullet)} \right\} ,$$
	where the supremum is taken over all valuations of $K$ supported on $S$ that take positive values in $\bb_k$. Particularly, if $\R^{[\ell]}(\aa_\bullet)$ is a standard graded $S$-algebra for some $\ell$, then $\rhat(\aa_\bullet, \overline{\bb_\bullet})$ is either a positive rational number or infinite.
\end{corollary}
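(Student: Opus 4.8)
The plan is to read this off from \Cref{thm.ReesVal} after recasting the Noetherian hypothesis as the standard-gradedness of a Veronese subalgebra. Recall the structural fact quoted in \Cref{sec.intro} (see \cite[Remark 2.4]{Rat} and \cite[Proposition 2.1]{Sch88}): if a graded family is Noetherian, then some Veronese subalgebra is a standard graded algebra over its degree-zero part. Since $S$ is Noetherian throughout, this applies to $\bb_\bullet$, so there is an integer $k$ with $\R^{[k]}(\bb_\bullet)$ a standard graded $S$-algebra. For this $k$ the hypothesis of \Cref{thm.ReesVal} holds, and the two displayed equalities there yield exactly the asserted formula $\rhat(\aa_\bullet,\overline{\bb_\bullet}) = \max_{v \in \RV(\bb_k)} \{\vhat(\bb_\bullet)/\vhat(\aa_\bullet)\} = \sup_{v(\bb_k)>0}\{\vhat(\bb_\bullet)/\vhat(\aa_\bullet)\}$, the supremum being taken over valuations of $K$ supported on $S$ with $v(\bb_k)>0$.

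For the last assertion, assume in addition that $\R^{[\ell]}(\aa_\bullet)$ is a standard graded $S$-algebra for some $\ell$. This is precisely the hypothesis of the final sentence of \Cref{thm.ReesVal}: by \Cref{supported}, for each of the finitely many $v \in \RV(\bb_k)$ one has $\vhat(\aa_\bullet) = v(\aa_\ell)/\ell \in \QQ_{\ge 0}$ and $\vhat(\bb_\bullet) = v(\bb_k)/k \in \QQ_{>0}$; hence $\rhat(\aa_\bullet,\overline{\bb_\bullet})$ is the maximum of finitely many ratios of such numbers, which is a positive rational number unless $\vhat(\aa_\bullet)=0$ for some $v \in \RV(\bb_k)$, in which case it equals $\infty$ by \Cref{low_bou}.

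No real obstacle arises here: the substantive work — the comparison with Rees valuations and the contradiction argument run through \cite[Theorem 6.8.3 and Chapter 10]{SH2006} — has already been discharged inside the proof of \Cref{thm.ReesVal}. The only point worth flagging is the implication ``$\R(\bb_\bullet)$ Noetherian $\Rightarrow$ $\R^{[k]}(\bb_\bullet)$ standard graded for some $k$'', which is the cited folklore statement on Veronese subalgebras of Noetherian graded rings and requires no new argument in the present setting.
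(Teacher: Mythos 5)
Your proposal is correct and follows exactly the route the paper intends: the paper gives no separate proof of \Cref{cor.NoethRees}, stating only that it is an ``immediate consequence'' of \Cref{thm.ReesVal}, and the implicit bridge is precisely the fact you cite from \cite{Rat,Sch88} that a Noetherian graded family has some standard graded Veronese subalgebra. Your elaboration of the rationality statement also mirrors the closing paragraph of the proof of \Cref{thm.ReesVal} (finitely many Rees valuations, each yielding rational $\vhat(\aa_\bullet)$ and $\vhat(\bb_\bullet)$ via \Cref{supported}, with the infinite case covered by \Cref{low_bou}).
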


Without the condition that the $k$-th Veronese subring $\R^{[k]}(\bb_\bullet)$ is a standard graded $S$-algebra, the conclusion of \Cref{thm.ReesVal} may not hold and the asymptotic resurgence number $\rhat(\aa_\bullet, \bb_\bullet)$ could be an irrational number, as demonstrated in the next two examples.

\begin{example} \label{ex.irrational}
 Let $I$ be a nonzero proper ideal in $S$. Fix $\alpha, \beta \in \mathbb{R}_{>0}$. Consider the families $\aa_\bullet = \{\aa_i\}_{i \ge 1}$ and $\bb_\bullet= \{\bb_i\}_{i \ge 1}$ given by
 $$\aa_i=I^{\lceil \alpha i \rceil} \text{ and } \bb_i =I^{\lceil \beta i \rceil}.$$
 It is easy to verify that $\aa_\bullet$ and $\bb_\bullet$ are filtration of ideals in $S$.

 Let $v \in \text{RV}(I)$ be any Rees valuation of $I$. It can be seen that $\vhat(\aa_\bullet)=\alpha v(I)$ and $\vhat(\bb_\bullet) = \beta v(I)$. Therefore, by \Cref{low_bou}, $\rhat(\aa_\bullet, \overline{\bb_\bullet}) \ge \dfrac{\vhat(\bb_\bullet)}{\vhat(\aa_\bullet)} = \dfrac{\beta}{\alpha}.$

On the other hand, for any $s,r \in \NN$ such that $\dfrac{s}{r} \ge \dfrac{\beta}{\alpha}$, we have $\alpha s \ge \beta r$. This implies that $\lceil \alpha s \rceil \ge \lceil \beta r \rceil$.  It then follows that $\aa_s \subseteq \bb_r.$ Therefore, $\rho(\aa_\bullet, \bb_\bullet) \le \dfrac{\beta}{\alpha}$. Hence,
$$\rhat(\aa_\bullet, \overline{\bb_\bullet}) = \rhat(\aa_\bullet, \bb_\bullet) =\rho(\aa_\bullet, \overline{\bb_\bullet})=\rho(\aa_\bullet, \bb_\bullet)=\dfrac{\beta}{\alpha}.$$

It follows from \cite[Proposition 2.1]{Sch88} that, in this example, $\R^{[k]}(\aa_\bullet)$ is a standard graded $S$-algebra only if $\alpha$ is rational. Particularly, if we choose $\alpha$ or $\beta$ to be irrational then the hypothesis of \Cref{thm.ReesVal} is not satisfied, and the resurgence and asymptotic resurgence numbers are both irrational.
 \end{example}

\begin{example} \label{ex.rhatNOTnoeth}
	Let $R$ be a Noetherian domain and let $S=R[x]$. Let $I$ be a nonzero proper ideal of $R$. Let $\aa_{\bullet}$ and $\bb_{\bullet} $ be graded families given by $$\aa_i = xI^i \text{ and } \bb_i=x^2I^i \text{ for all } i \ge 1.$$
	Direct computation shows that $\overline{\bb_i}=x^2\overline{I^i}$ for all $i\ge 1$.
	
	We claim that $\rhat(\aa_{\bullet},\overline{\bb_{\bullet}}) = \infty$.
	Indeed, for any $s,r,t \in \NN$, it can be seen that $\aa_{st} =xI^{st} \not\subseteq \overline{\bb_{rt}}=x^2\overline{I^{rt}}$ which implies that $ \dfrac{s}{r} \leq \rhat(\aa_{\bullet},\overline{\bb_{\bullet}})$. This inequality holds for any $s, r \in \NN$, so we have $\rhat(\aa_{\bullet},\overline{\bb_{\bullet}}) = \infty.$
	
	Let $v$ be a valuation of $K$ supported on $S$. Then $v(\aa_i) = v(x)+v(I^i) = v(x) +iv(I)$ and $v(\overline{\bb_i}) =2 v(x) +i v(I)$ for all $i \ge 1$.  Therefore, $\vhat(\aa_\bullet) =\vhat(\bb_\bullet) =v(I).$ Thus, we have   $$ \sup_{\vhat(\bb_\bullet) > 0} \left\{ \dfrac{\vhat(\bb_\bullet)}{\vhat(\aa_\bullet)} \right\} =1<\rhat(\aa_\bullet,\overline{\bb_\bullet}).$$
\end{example}

\begin{remark}
Let $\aa_\bullet$ and $\bb_\bullet$ be graded families of nonzero ideals in $S$.
\begin{enumerate}
	\item Suppose that $\R^{[k]}(\bb_\bullet)$ is a standard graded $S$-algebra. Then, $\rhat(\aa_\bullet,\overline{\bb_\bullet})<\infty$ if and only if $\vhat(\aa_\bullet) >0$ for all  $v \in \RV(\bb_k)$. This follows directly from Theorem \ref{thm.ReesVal} and the fact that $\RV(\bb_k)$ is a finite set.
	\item Suppose that $\rhat(\aa_\bullet, \overline{\bb_\bullet}) = - \infty$. Then, by \Cref{low_bou}, for any valuation $v$ of $K$ supported on $S$, $\vhat(\bb_\bullet)=0$. Therefore, the conclusion of Theorem \ref{thm.ReesVal} is still valid as $\sup\limits_{\vhat(\bb_\bullet) > 0} \left\{ \dfrac{\vhat(\bb_\bullet)}{\vhat(\aa_\bullet)} \right\}=-\infty$ in this case. We shall see this scenario in the following example.
\end{enumerate}
\end{remark}

\begin{example}
Let $I$ be a nonzero proper normal ideal in $S$. Consider  $\aa_\bullet$ and $\bb_\bullet $ with
$$\aa_i =I^i \text{ and } \bb_i = I^{\lceil{\sqrt i}\rceil} \text{ for all } i \ge 1.$$
As, we have seen in \Cref{ex.low}.(3) that for any valuation $v$ of $K$ supported on $S$,  $\vhat(\aa_\bullet) = v(I)$ and $\vhat(\bb_\bullet) = 0.$ Particularly, $$\sup_{\vhat(\bb_\bullet) > 0} \left\{ \dfrac{\vhat(\bb_\bullet)}{\vhat(\aa_\bullet)} \right\} = -\infty.$$ Also, $\rhat(\aa_\bullet,\overline{\bb_\bullet}) = -\infty.$
Hence, $\rhat(\aa_\bullet,\overline{\bb_\bullet}) = \sup\limits_{\vhat(\bb_\bullet) > 0} \left\{ \dfrac{\vhat(\bb_\bullet)}{\vhat(\aa_\bullet)} \right\}$.
\end{example}

If we assume that $\bb_\bullet$ is a filtration and $\R^{[k]}(\bb_\bullet)$ is a standard graded $S$-algebra for some $k$, then the conclusion of \Cref{thm.ReesVal} can be slightly modified to consider the supremum over those valuations which takes positive values on $\bb_1$ (instead of $\bb_k$).

\begin{corollary}
	Let $\aa_\bullet$ be a graded family and let $\bb_\bullet$ be a filtration of nonzero ideals in $S$.  Suppose that $\R^{[k]}(\bb_\bullet)$ is a standard graded $S$-algebra. Then,
	$$\rhat(\aa_\bullet,\overline{\bb_\bullet}) =\sup_{v(\bb_1) > 0} \left\{ \dfrac{\vhat(\bb_\bullet)}{\vhat(\aa_\bullet)} \right\} ,$$
	where the supremum is taken over valuations of $K$ supported on $S$ that take positive values in $\bb_1$.
\end{corollary}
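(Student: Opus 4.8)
The plan is to reduce immediately to \Cref{thm.ReesVal}, which already establishes
$$\rhat(\aa_\bullet,\overline{\bb_\bullet}) = \sup_{v(\bb_k) > 0}\left\{\dfrac{\vhat(\bb_\bullet)}{\vhat(\aa_\bullet)}\right\},$$
the supremum being taken over all valuations $v$ of $K$ supported on $S$ with $v(\bb_k)>0$. Since the quantity being supremized does not involve $k$, it is enough to check that, among valuations supported on $S$, the condition $v(\bb_1)>0$ is equivalent to the condition $v(\bb_k)>0$; once this is known, the two suprema are taken over the same set of valuations and hence agree (including in the degenerate case where both index sets are empty and the value is $-\infty$).

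To verify the equivalence I would use the two containments $\bb_1^k \subseteq \bb_k \subseteq \bb_1$: the inclusion $\bb_k \subseteq \bb_1$ holds because $\bb_\bullet$ is a filtration, while $\bb_1^k \subseteq \bb_k$ holds because $\bb_\bullet$ is a graded family. Now suppose $v$ is supported on $S$. If $v(\bb_1) > 0$, then every nonzero $y \in \bb_k \subseteq \bb_1$ satisfies $v(y) \ge v(\bb_1) > 0$, so $v(\bb_k) \ge v(\bb_1) > 0$. Conversely, if $v(\bb_1) = 0$, pick $x \in \bb_1$ with $v(x) = 0$; then $x^k \in \bb_1^k \subseteq \bb_k$ and $v(x^k) = 0$, and since $v$ is supported on $S$ we have $v(\bb_k) \ge 0$, which forces $v(\bb_k) = 0$. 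Hence $v(\bb_1) > 0 \iff v(\bb_k) > 0$, as claimed.

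Substituting this equivalence into \Cref{thm.ReesVal} yields the asserted formula. I do not foresee any genuine obstacle here: the statement is a cosmetic sharpening of \Cref{thm.ReesVal} that exploits the fact that a filtration is decreasing, so that $\bb_1$ ``sees'' exactly the same valuations as $\bb_k$ as far as positivity is concerned. The standardness of $\R^{[k]}(\bb_\bullet)$ is invoked only through \Cref{thm.ReesVal} (which in turn uses \Cref{supported} to identify $\vhat(\bb_\bullet)$ with $v(\bb_k)/k$), so none of that machinery needs to be reproved.
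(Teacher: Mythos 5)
Your proof is correct and follows essentially the same route as the paper: both reduce to \Cref{thm.ReesVal} and then use the two containments $\bb_1^k \subseteq \bb_k \subseteq \bb_1$ (the first from gradedness, the second from the filtration property) to show that, for a valuation supported on $S$, $v(\bb_1)>0$ iff $v(\bb_k)>0$. The paper phrases this via the inequality $v(\bb_1) \le v(\bb_k) \le k\,v(\bb_1)$, while you argue element-by-element; the content is the same.
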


\begin{proof} Since $\bb_\bullet$ is a filtration of ideals in $S$, we have $\bb_1^k \subseteq \bb_k \subseteq \bb_1$. Therefore, for any valuation $v$ of $K$, we have $ v(\bb_1) \le v(\bb_k) \leq kv(\bb_1)$. This implies that $v(\bb_1)>0$ if and only if $v(\bb_k)>0$. Now, the assertion follows from Theorem \ref{thm.ReesVal}.
\end{proof}

\begin{question} \label{ques-rees}
For which graded families $\aa_\bullet$ and $\bb_\bullet$ of ideals, does the following equality hold
$$\rhat(\aa_\bullet,\overline{\bb_\bullet}) =\max_{v \in \RV(\bb_1)} \left\{ \dfrac{\vhat(\bb_\bullet)}{\vhat(\aa_\bullet)} \right\}?$$
\end{question}


\begin{remark}
	\label{rmk.IntOfA}
	With the same line of arguments, one may obtain some similar results, but not all, when $(\aa_\bullet, \overline{\bb_\bullet})$ is replaced by $(\overline{\aa_\bullet}, \bb_\bullet)$. We leave this to the interested reader.
\end{remark}


\section{Resurgence numbers and integral closures} \label{sec.ResAndIntCl}

This section is devoted to the study of how asymptotic resurgence numbers behave when the family $\bb_\bullet = \{\bb_i\}_{i \ge 1}$ is replaced by $\overline{\bb_\bullet} = \{\overline{\bb_i}\}_{i \ge 1}$. In \cite[Proposition 4.2 and Corollary 4.14]{DFMS}, it was shown that for an ideal $I$ in a polynomial ring $S$,
$$\rhat(I^{(\bullet)}, I^\bullet) = \rhat(I^{(\bullet)}, \overline{I^\bullet}) = \rho(I^{(\bullet)}, \overline{I^\bullet}).$$
The situation for resurgence and asymptotic resurgence numbers of pairs of graded families of ideals quickly gets a lot more complicated. We shall give criteria for similar equality to hold for pairs of filtration of ideals, and exhibit examples in which these equality are not necessarily true. The main result of this section is highlighted in \Cref{thm.asym_res_int} and \Cref{asym_res_int}.

The next lemma is a direct generalization of \cite[Lemma 4.1]{DFMS}. We include the proof for completeness.

\begin{lemma}[{See \cite[Lemma 4.1]{DFMS}}]\label{asp_res_bound}
	Let $\aa_\bullet$ and $\bb_\bullet$ be filtration of ideals in $S$. Suppose that $\{s_n\}_{n \in \NN}$ and $\{r_n\}_{n \in \NN}$ are sequences of positive integers such that $\lim\limits_{n \to \infty} s_n =\lim\limits_{n \to \infty} r_n =\infty,$ $\aa_{s_n} \subseteq \bb_{r_n}$ for all $n$, and $\lim\limits_{n\to \infty} \dfrac{s_n}{r_n} =h$ for some $h \in \mathbb{R}_{\geq 0}.$ Then, $\rhat(\aa_\bullet,\bb_\bullet) \leq h.$
\end{lemma}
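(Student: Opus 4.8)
The plan is to argue by contradiction: suppose $\rhat(\aa_\bullet,\bb_\bullet) > h$, so by definition of the asymptotic resurgence there exist fixed positive integers $s,r$ with $s/r > h$ and $\aa_{st} \not\subseteq \bb_{rt}$ for all $t \gg 1$. The strategy is to use the hypothesis sequences $\{s_n\}$ and $\{r_n\}$ together with the filtration property (so that indices can be bumped up without leaving the ideal) to manufacture a containment $\aa_{st} \subseteq \bb_{rt}$ for suitable large $t$, contradicting the non-containment above. Concretely, since $s/r > h = \lim s_n/r_n$, for all large $n$ we have $s_n/r_n < s/r$, i.e. $s r_n < r s_n$. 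The idea is to compare the pair $(s_n, r_n)$ with a scaled pair $(st, rt)$: I want to find $t$ so that $st \ge s_n$ and $rt \le r_n$, because then $\aa_{st} \subseteq \aa_{s_n} \subseteq \bb_{r_n} \subseteq \bb_{rt}$, using that $\aa_\bullet$ and $\bb_\bullet$ are filtration.

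First I would make the index-matching precise. Given $n$, choose $t = t_n := \lceil s_n / s \rceil$, so that $s t_n \ge s_n$ automatically, and $s t_n < s_n + s$. I then need $r t_n \le r_n$. Estimate $r t_n < r(s_n + s)/s = (r/s) s_n + r$. Since $s r_n < r s_n$ for large $n$ — in fact I should extract a strict gap: because $h < s/r$, we actually get $(r/s) s_n \le r_n - \delta_n$ where I can arrange, using $s_n \to \infty$, that the deficit dominates the constant $r$; more carefully, $\lim s_n/r_n = h < s/r$ implies $\liminf (r_n - (r/s)s_n) = +\infty$, since $r_n - (r/s)s_n = r_n(1 - (r/s)(s_n/r_n)) \to \infty \cdot (1 - (r/s)h) = +\infty$. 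Hence for $n \gg 1$ we have $r_n - (r/s)s_n > r$, which gives $r t_n < (r/s)s_n + r < r_n$, as desired. Also $t_n \to \infty$ since $s_n \to \infty$. Therefore for all sufficiently large $n$,
\[
\aa_{s t_n} \subseteq \aa_{s_n} \subseteq \bb_{r_n} \subseteq \bb_{r t_n},
\]
which contradicts $\aa_{st} \not\subseteq \bb_{rt}$ for $t \gg 1$ once $t_n$ is large enough. This forces $\rhat(\aa_\bullet,\bb_\bullet) \le h$.

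The only delicate point — and the step I expect to need the most care — is the inequality chasing that guarantees both $s t_n \ge s_n$ and $r t_n \le r_n$ simultaneously for large $n$; this is where the strict inequality $h < s/r$ (rather than $\le$) and the fact that $s_n, r_n \to \infty$ are essential, so that the $O(1)$ error from the ceiling function is absorbed. Everything else is a routine application of the filtration property $\aa_p \supseteq \aa_{p+1}$, $\bb_p \supseteq \bb_{p+1}$. I should also note the degenerate case $h \ge s/r$ for every candidate pair $(s,r)$ realizing a value above $h$ cannot occur by the supremum definition, so the contradiction setup is legitimate; and if $\rhat(\aa_\bullet,\bb_\bullet) = -\infty$ the statement is vacuous.
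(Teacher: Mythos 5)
Your proof is correct and follows essentially the same strategy as the paper's: argue by contradiction, extract a pair $(s,r)$ with $h < s/r$ and $\aa_{st}\not\subseteq\bb_{rt}$ for $t\ge t_0$, then use the divergence of $r_n - (r/s)s_n$ (equivalently $sr_n - rs_n \to \infty$, as the paper writes it) to find a large $t$ with $s_n \le st$ and $rt \le r_n$, forcing the contradictory containment via the filtration property. The only cosmetic difference is your choice $t_n = \lceil s_n/s\rceil$, which makes $st_n \ge s_n$ automatic, whereas the paper picks $t$ as (roughly) $\lfloor r_n/r\rfloor$, making $rt \le r_n$ automatic -- both bookkeeping devices are equivalent.
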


\begin{proof}
	Suppose, by contradiction, that $\rhat(\aa_\bullet,\bb_\bullet)>h$. Then, there exist $s,r \in \mathbb{N}$ such that $h<\frac{s}{r}<\rhat(\aa_\bullet,\bb_\bullet)$ and  $\aa_{st}\not\subseteq \bb_{rt}$ for all $t\gg 1$. Let $t_{0} \in \mathbb{N}$ be such that   $\aa_{st}\not\subseteq \bb_{rt}$ for all $t\ge t_0$.
	
	Set $\epsilon = \frac{s}{r}-h$. Observe that there exists $n_0 \in \mathbb{N}$ such that  $\frac{s_n}{r_n}<h+\epsilon = \frac{s}{r}$ for all $n \geq n_0$. That is, $sr_n-rs_n>0$ for all $n \geq n_0$.  Since $\lim\limits_{n \rightarrow \infty} \frac{sr_n - rs_n}{rr_n} = \frac{s}{r} - h = \epsilon$ and $\lim\limits_{n \rightarrow \infty} rr_n = \infty$, it follows that $\lim\limits_{n \to \infty} (sr_n-rs_n) = \infty$. Particularly, this implies that for $n \gg 1$, we have $r_n \ge rt_0$ and $sr_n-rs_n > rs$.
	
	Consider the smallest $n$ such that $r_n\ge rt_0$ and $sr_n-rs_n>rs$, and the largest $t$ such that, with this value of $n$, we have $r_n\ge rt$. Note that, with these choices of $n$ and $t$, we have $t\ge t_0$ and  $r_{n}<r(t+1)$. Therefore, $sr_{n}<srt+sr$ which implies that $rs_n+rs<sr_n<srt+sr$. Thus, $s_{n}<st$. As a consequence, we get that $\aa_{st}\subseteq \aa_{s_n}\subseteq \bb_{r_n} \subseteq \bb_{rt}$, which is a contradiction as $ t \geq t_0$. Hence,  $\rhat(\aa_\bullet,\bb_\bullet) \le h.$
\end{proof}

The following general, yet simple, observation is the starting point to consider $\overline{\bb_\bullet}$ in place of $\bb_\bullet$.

\begin{theorem}\label{thm.asym_res_int}
	Let $\aa_\bullet, \bb_\bullet$ and $\bb_\bullet'$ be filtration of ideals in $S$.  Suppose that $\bb_\bullet \le \bb_\bullet'$ and $\R({\bb_\bullet'})$ is a finitely generated $\R(\bb_\bullet)$-module. Then, $$\rhat(\aa_\bullet,\bb_\bullet')=\rhat(\aa_\bullet,\bb_\bullet).$$
\end{theorem}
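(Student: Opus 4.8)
The plan is to isolate a single inequality: since $\bb_\bullet \le \bb_\bullet'$, \Cref{lem.estimate}(1) immediately gives $\rhat(\aa_\bullet,\bb_\bullet) \ge \rhat(\aa_\bullet,\bb_\bullet')$, so the entire content of the statement is the reverse inequality $\rhat(\aa_\bullet,\bb_\bullet) \le \rhat(\aa_\bullet,\bb_\bullet')$, and this is where the finite-generation hypothesis must enter.

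The first step is to convert that hypothesis into a bounded index shift. Pick finitely many homogeneous generators of $\R(\bb_\bullet')$ as an $\R(\bb_\bullet)$-module, say $g_i = \beta_i t^{d_i}$ with $\beta_i \in \bb_{d_i}'$, and set $c = \max_i d_i$. Expanding an arbitrary degree-$n$ element of $\R(\bb_\bullet')$ in terms of the $g_i$ shows $\bb_n' = \sum_{d_i \le n} \beta_i \bb_{n-d_i}$; since each $\beta_i \in S$ and $\bb_\bullet$ is a filtration with $n-d_i \ge n-c$, this gives $\bb_n' \subseteq \bb_{n-c}$ for every $n$ (with the convention $\bb_j = S$ for $j \le 0$). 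Thus $\bb_n \subseteq \bb_n' \subseteq \bb_{n-c}$ for all $n$.

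Now fix any $(s,r) \in \NN^2$ in the set defining $\rhat(\aa_\bullet,\bb_\bullet)$, i.e.\ $\aa_{st} \not\subseteq \bb_{rt}$ for $t \gg 1$; I will show $s/r \le \rhat(\aa_\bullet,\bb_\bullet')$. For each integer $M \ge 1$, the same non-containment (evaluated at $Mt$ in place of $t$) gives $\aa_{(Ms)t} \not\subseteq \bb_{(Mr)t}$ for $t \gg 1$. For $t \ge c$ we have $(Mr+1)t - c \ge Mrt$, so $\bb_{(Mr+1)t}' \subseteq \bb_{(Mr+1)t - c} \subseteq \bb_{Mrt}$; were $\aa_{(Ms)t}$ contained in $\bb_{(Mr+1)t}'$ it would then lie in $\bb_{Mrt}$, a contradiction. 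Hence $\aa_{(Ms)t} \not\subseteq \bb_{(Mr+1)t}'$ for $t \gg 1$, so $Ms/(Mr+1)$ belongs to the set defining $\rhat(\aa_\bullet,\bb_\bullet')$ and therefore $\rhat(\aa_\bullet,\bb_\bullet') \ge Ms/(Mr+1)$. Letting $M \to \infty$ gives $\rhat(\aa_\bullet,\bb_\bullet') \ge s/r$, and taking the supremum over all admissible $(s,r)$ (with $\sup \varnothing = -\infty$) yields $\rhat(\aa_\bullet,\bb_\bullet') \ge \rhat(\aa_\bullet,\bb_\bullet)$. Combined with the easy direction, equality follows.

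The only delicate point is the bookkeeping with the ``$t \gg 1$'' quantifier: replacing $(s,r)$ by $(Ms, Mr+1)$ is forced, because the index shift by $c$ must be absorbed while keeping the comparison $\bb_{(Mr+1)t}' \subseteq \bb_{Mrt}$ valid, and the resulting small loss in the ratio is recovered by sending $M \to \infty$. The filtration hypothesis on $\bb_\bullet$ (and implicitly its graded-family structure) is exactly what makes the nested-ideal comparisons go through; I foresee no other obstacle.
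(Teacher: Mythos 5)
Your proof is correct, and it takes a genuinely different route from the paper's. Both arguments first convert module-finiteness into the bounded shift $\bb'_n \subseteq \bb_{n-c}$, but the paper then proves $\rhat(\aa_\bullet,\bb_\bullet)\le \rhat(\aa_\bullet,\bb_\bullet')$ "from above": for any $h=s/r>\rhat(\aa_\bullet,\bb_\bullet')$ it extracts infinitely many containments $\aa_{st_n}\subseteq \bb'_{rt_n}\subseteq \bb_{rt_n-c}$ and invokes the sandwiching lemma \Cref{asp_res_bound} to conclude $\rhat(\aa_\bullet,\bb_\bullet)\le h$. You instead work "from below": for each $s/r$ in the defining set for $\rhat(\aa_\bullet,\bb_\bullet)$ you scale by $M$, absorb the shift $c$ into the single extra unit in the denominator via $\bb'_{(Mr+1)t}\subseteq \bb_{(Mr+1)t-c}\subseteq \bb_{Mrt}$ for $t\ge c$, and let $M\to\infty$. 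This makes your proof self-contained (no appeal to \Cref{asp_res_bound}) and, notably, nowhere uses that $\aa_\bullet$ is a filtration: \Cref{asp_res_bound} needs $\aa_\bullet$ to be a filtration in the step $\aa_{st}\subseteq\aa_{s_n}$, whereas your argument only manipulates the $\bb$-side. So your approach in fact proves the statement (and its sharpening \Cref{cor.asym_res_int}) with $\aa_\bullet$ merely a graded family, a mild but genuine generalization. The paper's choice is natural given that \Cref{asp_res_bound} is reused elsewhere (e.g.\ in \Cref{ex.rhoNOTequalBAR}), but your argument is the more economical one for this theorem taken in isolation.
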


\begin{proof}
Since $ \bb_i \subseteq  \bb_i'$ for all $i$,  $\rhat(\aa_\bullet,{\bb_\bullet'}) \leq \rhat(\aa_\bullet,\bb_\bullet)$ by \Cref{lem.estimate}. If $\rhat(\aa_\bullet,\bb_\bullet') =\infty$, then we are done. Thus, we may assume that $\rhat(\aa_\bullet,\bb_\bullet') <\infty$.
	
Consider any $s,r \in \NN$ with $h=\dfrac{s}{r} > \rhat(\aa_\bullet,\bb_\bullet'). $ We claim that $\rhat(\aa_\bullet,\bb_\bullet) \leq h.$ This particularly shows that $\rhat(\aa_\bullet, \bb_\bullet) \le \rhat(\aa_\bullet, \bb_\bullet')$ and establishes the assertion.
	
Indeed, since $\R(\bb_\bullet')$ is a finitely generated $\R(\bb_\bullet)$-module, there exists a homogeneous set of generators $\{u_1,\ldots,u_m\}$ of $\R(\bb_\bullet')$ as an $\R(\bb_\bullet)$-module. Let $k=\max\{ \deg(u_i) ~\big|~ 1 \leq i \leq m\}$. Then, for $n \ge k$, we have
	$$\bb_n' = \sum_{i = 1}^m \bb_{n-\deg(u_i)} \bb_{\deg(u_i)}' \subseteq \bb_{n-k}.$$
	Since $\dfrac{s}{r} > \rhat(\aa_\bullet,\bb_\bullet')$, $\aa_{st} \subseteq \bb_{rt}'$ for infinitely many values of $t$. Let $\{t_n\}_{n \in \NN}$ be an increasing sequence of positive integers such that $t_1 \ge k$ and $\aa_{st_n} \subseteq \bb_{rt_n}'$ for all $n \in \NN$. It can be seen that $\aa_{st_n} \subseteq \bb_{rt_n}' \subseteq \bb_{rt_n-k}$ for all $n \in \NN$.  Now, let $s_n=st_n$ and $r_n=rt_n-k$ for all $n$. Then $\displaystyle \lim_{n \to \infty} s_n =\lim_{n \to \infty} r_n =\infty,$ $\aa_{s_n} \subseteq \bb_{r_n} $ for all $n$ and $\displaystyle \lim_{n \to \infty}\dfrac{s_n}{r_n}  =\dfrac{s}{r}.$ Thus, by \Cref{asp_res_bound}, $\rhat(\aa_\bullet,\bb_\bullet) \leq \dfrac{s}{r}=h$. The result is proved.
\end{proof}

With essentially the same proof, the hypothesis of \Cref{thm.asym_res_int} can be made slightly weaker as follows.

\begin{corollary} \label{cor.asym_res_int}
	Let $\aa_\bullet$ and $ \bb_\bullet$ be filtration of ideals in $S$. Let $\bb_\bullet'$ be a graded family such that $\bb_\bullet \le \bb_\bullet'$. Suppose that there exists $k \in \NN$, such that $\bb'_{i+k} \subseteq \bb_i$ for all $i \in \NN$. Then,
	$$\rhat(\aa_\bullet,\bb_\bullet')=\rhat(\aa_\bullet,\bb_\bullet).$$
\end{corollary}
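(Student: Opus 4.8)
The plan is to adapt the proof of \Cref{thm.asym_res_int} almost verbatim, replacing the role of the finite generation of $\R(\bb_\bullet')$ over $\R(\bb_\bullet)$ with the direct containment hypothesis $\bb'_{i+k} \subseteq \bb_i$. First I would observe that since $\bb_\bullet \le \bb_\bullet'$, \Cref{lem.estimate} gives $\rhat(\aa_\bullet, \bb_\bullet') \le \rhat(\aa_\bullet, \bb_\bullet)$, and if $\rhat(\aa_\bullet, \bb_\bullet') = \infty$ there is nothing to prove; so I may assume $\rhat(\aa_\bullet, \bb_\bullet') < \infty$. It then suffices to show the reverse inequality $\rhat(\aa_\bullet, \bb_\bullet) \le \rhat(\aa_\bullet, \bb_\bullet')$.

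Next I would fix arbitrary $s, r \in \NN$ with $h = \tfrac{s}{r} > \rhat(\aa_\bullet, \bb_\bullet')$ and aim to prove $\rhat(\aa_\bullet, \bb_\bullet) \le h$; letting $h$ decrease to $\rhat(\aa_\bullet, \bb_\bullet')$ then finishes. By definition of $\rhat$, the inequality $\tfrac{s}{r} > \rhat(\aa_\bullet, \bb_\bullet')$ forces $\aa_{st} \subseteq \bb'_{rt}$ for infinitely many $t$. Pick an increasing sequence $\{t_n\}_{n \in \NN}$ with $rt_1 \ge k$ (so that $rt_n - k \ge 0$ for all $n$) and $\aa_{st_n} \subseteq \bb'_{rt_n}$ for every $n$. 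The hypothesis $\bb'_{i+k} \subseteq \bb_i$ applied with $i = rt_n - k$ yields $\bb'_{rt_n} \subseteq \bb_{rt_n - k}$, hence $\aa_{st_n} \subseteq \bb_{rt_n - k}$ for all $n$.

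Finally I would set $s_n = st_n$ and $r_n = rt_n - k$, noting $\lim_n s_n = \lim_n r_n = \infty$, that $\aa_{s_n} \subseteq \bb_{r_n}$ for all $n$, and that $\lim_n \tfrac{s_n}{r_n} = \lim_n \tfrac{st_n}{rt_n - k} = \tfrac{s}{r} = h$. Applying \Cref{asp_res_bound} to $\aa_\bullet$ and $\bb_\bullet$ with these sequences gives $\rhat(\aa_\bullet, \bb_\bullet) \le h$, as desired. Since $h > \rhat(\aa_\bullet, \bb_\bullet')$ was arbitrary, $\rhat(\aa_\bullet, \bb_\bullet) \le \rhat(\aa_\bullet, \bb_\bullet')$, and combined with the first inequality the equality follows.

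I do not anticipate a genuine obstacle here: the only point requiring minor care is checking that the uniform bound $\bb'_{i+k} \subseteq \bb_i$ is exactly the ingredient extracted from finite generation in the proof of \Cref{thm.asym_res_int} (there it came from a homogeneous generating set of degrees at most $k$, giving $\bb'_n \subseteq \bb_{n-k}$ for $n \ge k$), so the present hypothesis is literally the abstracted version of that step and no new idea is needed. One should just confirm that the sequences $s_n, r_n$ are eventually positive and tend to infinity, which is immediate from $rt_1 \ge k$ and $t_n \to \infty$.
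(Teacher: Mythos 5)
Your proposal is correct and is exactly the argument the paper has in mind: the paper explicitly states that \Cref{cor.asym_res_int} follows from \Cref{thm.asym_res_int} ``with essentially the same proof,'' and you correctly identify that the only place finite generation of $\R(\bb_\bullet')$ over $\R(\bb_\bullet)$ enters the theorem's proof is to extract the uniform containment $\bb'_n \subseteq \bb_{n-k}$, which the corollary instead takes as a hypothesis; the rest of the argument (the two sequences $s_n, r_n$ and the appeal to \Cref{asp_res_bound}) is carried over verbatim.
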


The conclusion of \Cref{thm.asym_res_int} is not necessarily true without the hypothesis that $\aa_\bullet$, $\bb_\bullet$ and $\bb_\bullet'$ are filtration, as demonstrated in the following example.

\begin{example}
	\label{ex.notFiltration}
	Let $\kk$ be a field, $S=\kk[x,y]$, and set
	\begin{align*}
		\aa_1 = \bb_1 =(x^3,y^3), \bb_2=(x^4,x^3y,xy^3,y^4),\aa_2=(x,y)^4.
	\end{align*}
	With the convention that $\bb_i = (0)$ for $i < 0$ and $\bb_0 = S$, consider collections $\aa_\bullet, \bb_\bullet$ and $\bb_\bullet'$ of ideals in $S$ given as follows:
	\begin{align*}
		\bb_n &= \begin{cases}
			\bb_1, &\text{if $n\equiv 1 \quad \text{(modulo 3)}$}\\
			\bb_2, &\text{if $n\equiv 2 \quad \text{(modulo 3)}$}\\
			\bb_1\bb_2, &\text{if $n\equiv 0 \quad \text{(modulo 3)}$},
		\end{cases} \\
		\aa_n& =\begin{cases}
			\bb_1, &\text{if $n\equiv 1 \quad \text{(modulo 3)}$}\\
			\aa_2, &\text{if $n\equiv 2 \quad \text{(modulo 3)}$}\\
			\bb_1\aa_2, &\text{if $n\equiv 0 \quad \text{(modulo 3)}$},
		\end{cases}\\
		\bb'_n &= \bb_n+\bb_{n-2}\aa_2, \text{ for } n \ge 1.
	\end{align*}
	We shall show that $\bb_\bullet, \aa_\bullet,\bb'_\bullet$ are graded families, $\bb_\bullet \le \bb_\bullet'$, and $\R({\bb_\bullet'})$ is a finitely generated $\R(\bb_\bullet)$-module. Nevertheless, $\rhat(\aa,\bb')=-\infty  < \rhat(\aa,\bb)=\infty$, contradicting the conclusion of \Cref{thm.asym_res_int}.
	
We shall start with the observation that the following relations hold:
	\begin{enumerate}[\quad \rm (i)]
		\item $\bb_1^2\subseteq \bb_2\subseteq \bb_1$;
		\item $\bb_1= \aa_1, \bb_2\subseteq \aa_2$;
		\item $x^5y^2\in \bb_1\aa_2\setminus \bb_1\bb_2$;
		\item $\aa_2^2=\bb_2\aa_2=(x,y)^8$;
		\item $\aa_1^2\subseteq \aa_2, \aa_2^2\subseteq \aa_1=\bb_1$.
	\end{enumerate}
	
Indeed, (i), (ii) and (iv) follow by direction verification.
To see (iii), it suffices to note that $\bb_1\bb_2=(x^7,x^6y,x^4y^3,x^3y^4,xy^6,y^7)$ and $x^5y^2=x^3(x^2y^2)\in \bb_1\aa_2$.
In order to prove (v), we observe that, from (i),
	\[
	\aa_1^2 =\bb_1^2\subseteq \bb_2 \subseteq \aa_2.
	\]
Thus, it follows from (iv) that
	\[
	\aa_2^2=\aa_2\bb_2\subseteq \bb_2 \subseteq \bb_1=\aa_1,
	\]
which establishes (v).
	
We continue by claiming the following statements:
	\begin{enumerate}
		\item $\bb_\bullet, \aa_\bullet$ are graded families; in fact, $\bb_m\bb_n\subseteq \bb_{m+n}$ for all $m,n\in \ZZ$;
		\item  $\bb'_1=\bb_1, \bb_2 \subseteq \aa_2=\bb'_2$;
		\item $\bb'_n=\bb_n+\bb_{n-2}\aa_2=\bb_n+\bb_{n-2}\bb'_2$ for all $n\ge 1$;
		\item $\bb'_\bullet$ is a graded family and $\bb_\bullet \le \bb_\bullet'$;
		\item $\R(\bb_\bullet')$ is a finitely generated $\R(\bb_\bullet)$-module;
		\item $\aa_{3q}=\bb_1\aa_2=\bb'_{3n}, \bb_{3q}=\bb_1\bb_2$ for all $n,q\ge 1$;
		\item $\rhat(\aa,\bb')=-\infty$; and,
		\item $\rhat(\aa,\bb)=\infty$.
	\end{enumerate}

Indeed, (2) can be verified directly, and (3) follows from (2). We begin with (1).
Since $\bb_\bullet$ is periodic of period 3, to show that $\bb_\bullet$ is a graded family it suffices to check that
	\[
	\bb_1^2 \subseteq \bb_2, \bb_2^2\subseteq \bb_1, \bb_1\bb_2 \subseteq \bb_3.
	\]
This is a consequence of (i) and the fact that $\bb_3=\bb_1\bb_2$.
Similarly, to see that $\aa_\bullet$ is a graded family, it suffices to check that
	\[
	\aa_1^2 \subseteq \aa_2, \aa_2^2\subseteq \aa_1, \aa_1\aa_2 \subseteq \aa_3.
	\]
This follows from (v) and the fact that $\aa_1=\bb_1$ and $\aa_3=\bb_1\aa_2$.
The remaining assertion of (1) is clear using $\bb_i=(0)$ for $i<0$ and $\bb_0=S$.
	
For (4), it is clear from the definition that $\bb_\bullet \le \bb_\bullet'$. Thus, it remains to check that $\bb'_\bullet$ is a graded family. For $m,n\ge 1$, we have
	\begin{align*}
		\bb'_m\bb'_n &=(\bb_m+\bb_{m-2}\aa_2)(\bb_n+\bb_{n-2}\aa_2) \\
		&=\bb_m\bb_n+(\bb_m\bb_{n-2}+\bb_{m-2}\bb_n)\aa_2+\bb_{m-2}\bb_{n-2}\aa_2^2\\
		& \subseteq \bb_{m+n} +\bb_{m+n-2}\aa_2+\bb_{m-2}\bb_{n-2}\aa_2^2 \quad \text{(using (1))}\\
		&= \bb'_{m+n}+\bb_{m-2}\bb_{n-2}\bb_2\aa_2 \quad \text{(by definition of $\bb'_\bullet$ and (iv))}\\
		&\subseteq \bb'_{m+n}+\bb_{m+n-2}\aa_2 \quad \text{(using (1))}\\
		&=\bb'_{m+n} \quad \text{(using the definition of $\bb'_\bullet$)}.
	\end{align*}
	This is the desired containment.
	
To see (5), observe that by (3) we have $\bb'_n=\bb_n+\bb_{n-2}\bb'_2$ for all $n\ge 1$. Therefore, $\R(\bb')=\R(\bb)+\R(\bb)\bb'_2t^2$, which is a finitely generated $\R(\bb)$-module.
	
To prove (6), we remark that $\aa_{3q}=\bb_1\aa_2$ and $\bb_{3n}=\bb_1\bb_2$ by definition. Also,
	\begin{align*}
		\bb'_{3n}&=\bb_{3n}+\bb_{3n-2}\aa_2\\
		&=\bb_1\bb_2+\bb_1\aa_2 \quad \text{(by definition of $\bb_\bullet$)}.\\
		&= \bb_1\aa_2 \quad \text{(as $\bb_2\subseteq \aa_2$)}.
	\end{align*}
	
To establish (7), we claim that there is no pair of positive integers $(s,r)$ such that $\aa_{sn}\not\subseteq \bb'_{rn}$ for all $n\gg 0$. Indeed, for any $n\ge 1$, by (6), we have
	\[
	\aa_{3sn}=\bb_1\aa_2= \bb'_{3rn}.
	\]
Hence, $\rhat(\aa_\bullet,\bb'_\bullet)=-\infty$.  On the other hand, for all $q\ge 1$, by (6) and (iii), we get
	\[
	x^5y^2\in \aa_{3qn} \setminus \bb_{3n} \quad \text{for all $n\ge 1$}.
	\]
Therefore, $\rhat(\aa_\bullet,\bb_\bullet) \ge (3q)/3=q$, for all $q\ge 1$. This yields (8) and the desired computation.
\end{example}

As a corollary to \Cref{thm.asym_res_int}, we extend \cite[Proposition 4.2]{DFMS} to give a criterion when the asymptotic resurgence number of $(\aa_\bullet, \bb_\bullet)$ remains unchanged if we replace the family $\bb_\bullet$ by $\overline{\bb_\bullet}$. Before presenting this result, in \Cref{asym_res_int}, we remark that $\R(\overline{\bb_\bullet})$ is not necessarily a finitely generated module over $\R(\bb_\bullet)$, even when $\R(\bb_\bullet)$ is Noetherian, as discussed below.

For a Noetherian ring $B$ and an ideal $\nn \subseteq B$, let $\widehat{B}^\nn$ be the $\nn$-adic completion of $B$. We say that a Noetherian local ring $(S,\mm)$ is \emph{analytically unramified} if $\widehat{S}^{\mm}$ is a reduced ring, and \emph{analytically ramified} otherwise. The following example is a well-known construction (due to Nagata) of a one-dimensional Noetherian local domain that is analytically ramified. Statement (4) of \Cref{ex_Nagata} also shows that $\R(\overline{\bb^\bullet})$  need not be a finitely generated $\R(\bb^\bullet)$-module in general, even when $S$ is a one-dimensional Noetherian local domain, and $\bb$ is a principal ideal.

\begin{example}[Nagata]
\label{ex_Nagata}
Let $p$ be a prime number. Denote $\kk=\mathbb{F}_p(t_1,t_2,\ldots)$, and denote by $\kk^p$ the subfield of $p$-th powers in $\kk$. Let $A=\kk^p[[x]][\kk] \subseteq \kk[[x]]$, i.e. $A$ is the subring of $\kk[[x]]$ generated by elements of $\kk^p[[x]] \cup \kk$. Since a power series $\sum_{i=0}^\infty a_ix^i\in \kk[[x]]$ belongs to $A$ if and only if it is a $\kk$-linear combination of finitely many elements in $\kk^p[[x]]$, we deduce that
\begin{equation}
\label{eq_A_description}
A= \left\{\sum_{i=0}^\infty a_ix^i\in \kk[[x]] ~\Big|~ [\kk^p(a_0,a_1,a_2\ldots):\kk^p] <\infty \right \}.
\end{equation}

Let $f=\sum\limits_{i=1}^\infty t_ix^i \in \kk[[x]]\setminus A$. Set $S=A[f] \subseteq \kk[[x]]$ and let $\bb=xS\subseteq S$. We claim that the following statements hold.
\begin{enumerate}
\item $(A, xA)$ is a discrete valuation ring and $\widehat{A}^{xA}= \kk[[x]]$.
 \item  $S$ is a one-dimensional Noetherian domain, and it is a finitely generated $A$-module.
 \item $S$ is a local ring with the unique maximal ideal $\mm=(x,f)S$.
 \item For all $n\ge 1$, we have $f-\sum\limits_{i=1}^{n-1} t_ix^i \in \overline{\bb^n} \setminus \bb$. In particular, there does not exist an integer $c\ge 1$ such that $\overline{\bb^n} \subseteq \bb^{n-c}$ for all $n\ge c$, and $\R(\overline{\bb^\bullet})$  is not a finitely generated $\R(\bb^\bullet)$-module.
 \item There is an isomorphism $S\cong A[T]/(T^p-f^p)$.
 \item There is an isomorphism $\widehat{S}^\mm \cong \dfrac{\kk[[x]][T]}{((T-f)^p)}$. In particular, $S$ is \emph{not} analytically unramified.
\end{enumerate}
We shall give a down-to-earth treatment of these facts, since this example is of importance in the present manuscript. For all the statements, except (4), the experienced reader may follow the sketch given by Nagata in \cite[pp.\,205--206]{Na62}.

\textit{Statement (1).}  Applying \Cref{lem_intergralext_localitytransfer}(1) for the integral extension $A\subseteq B=\kk[[x]]$, we deduce that $(A, xB\cap A)$ is a local domain. Using \eqref{eq_A_description}, we deduce that $xB\cap A=xA$. Therefore, $A$ is a discrete valuation ring with the unique maximal ideal $xA$.  Completing the injection $A\subseteq B$ at $xA$, we get a map $\widehat{A}^{xA} \to \widehat{B}^{xA}=\widehat{B}^{xB}=\kk[[x]]$. This map is clearly surjective as $\kk \cup \{x\} \subseteq A$. Since $\widehat{A}^{xA}$  and $\kk[[x]]$ are both one dimensional regular local rings,  the map is an isomorphism, giving $\widehat{A}^{xA}= \kk[[x]]$.

\textit{Statement (2).} The ring $S=A[f]$ is Noetherian by Hilbert's Basis Theorem. Moreover, $A\to S=A[f]$ is an integral extension as $f^p\in x^pA$, so $S$ has dimension 1 and it is a finitely generated $A$-module.

\textit{Statement (3).} This follows by applying \Cref{lem_intergralext_localitytransfer}(2) for the map $A\subseteq S=A[f]$, noting that $f^p\in x^pA$.

\textit{Statement (4).} We have
\[
\left(f-\sum\limits_{i=1}^{n-1} t_ix^i\right)^p=\sum\limits_{j=n}^\infty t_j^px^{pj} \in x^{pn}S=\bb^{pn},
\]
so $f-\sum\limits_{i=1}^{n-1} t_ix^i \in \overline{\bb^n}$. We shall show that $f-\sum\limits_{i=1}^{n-1} t_ix^i\notin xS$. Indeed, assume the contrary, then $f\in xS$. Consider the finitely generated $A$-module $xA+fS \subseteq S$. Since $f\in xS$ and $S=A+fS$,
\[
xA+fS \subseteq xA+xS=xA+x(A+fS)=xA+x(xA+fS).
\]
By Nakayama's lemma over the ring $(A,xA)$, we deduce that $xA+fS=xA$. Hence, $f\in xA$. This is a contradiction, exhibiting that $f-\sum\limits_{i=1}^{n-1} t_ix^i\notin xS$. The remaining assertions are immediate.

\textit{Statement (5).} Let $J$ be the kernel of the natural surjection $A[T]\to S, T\mapsto f$. By (2) and the fact that $\dim A[T]=2$, we get $J$ is a prime ideal of height 1. Since $A$ is an UFD, so is $A[T]$. Thus, $J$ is a principal ideal. Note that $f^p$ is not a $p$-th power in $A$, and so $T^p-f^p\in A[T]$ is irreducible. This forces the containment $(T^p-f^p) \subseteq J$ to be an equality, in other words, $S\cong A[T]/(T^p-f^p)$.

\textit{Statement (6).} Note that $f^p\in xS$, so $xS \subseteq \mm=(x,f)$ and $\mm^p\subseteq xS$. Therefore, the adic topologies defined by $\mm$ and $xS$ are the same. Hence $\widehat{S}^\mm \cong \widehat{S}^{xS}$. We have an exact sequence of finitely generated $A[T]$-modules:
\[
0\to A[T] \xrightarrow{\cdot (T^p-f^p)} A[T] \to S \to 0.
\]
Completing  with respect to $xA[T]$ and noting that $\widehat{A[T]}^{xA[T]}\cong (\widehat{A}^{xA})[T]=\kk[[x]][T]$ we get an exact sequence
\[
0\to  \kk[[x]][T] \xrightarrow{\cdot (T^p-f^p)} \kk[[x]][T] \to \widehat{S}^\mm \to 0.
\]
Hence, as $\chara \kk=p$,
\[
 \widehat{S}^\mm \cong \kk[[x]][T]/(T^p-f^p)= \kk[[x]][T]/((T-f)^p).
\]
\end{example}

To complete the arguments in \Cref{ex_Nagata}, we need to establish the following lemmas on the transfer of locality along integral ring extensions, that are perhaps folklore results.

\begin{lemma}
\label{lem_intergralext_localitytransfer}
Let $A\subseteq B$ be an integral ring extension.
\begin{enumerate}[\quad \rm (1)]
 \item Assume that $B$ is a local ring with the maximal ideal $\nn$. Then, $A$ is also a local ring with the unique maximal ideal $\nn \cap A$.
 \item Assume that $A$ is a local ring with the maximal ideal $\mm$, and $B$ is generated over $A$ by finitely many elements $f_1,\ldots,f_n$ that belong to $\sqrt{\mm B}$. Then, $B$ is also a local ring with the unique maximal ideal $\mm B+(f_1,\ldots,f_n)$.
\end{enumerate}
\end{lemma}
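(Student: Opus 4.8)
The plan is to control non-units: in each part I pin down exactly which elements fail to be invertible, and the workhorse is the elementary ``multiply a monic relation by a power'' manipulation. For (1), put $\mm := \nn \cap A$; this is a prime ideal of $A$ (a contraction of the prime $\nn$), and it is proper since $1 \notin \nn$. I would show every $a \in A \setminus \mm$ is a unit of $A$; then the non-units of $A$ are precisely the elements of $\mm$, so $A$ is local with maximal ideal $\mm = \nn \cap A$. Given such an $a$, we have $a \notin \nn$, hence $a$ is invertible in the local ring $B$; writing an equation of integral dependence $(a^{-1})^{m} + c_{m-1}(a^{-1})^{m-1} + \dots + c_0 = 0$ with $c_i \in A$ and multiplying through by $a^{m-1}$, the relation $a a^{-1} = 1$ collapses it to $a^{-1} = -(c_{m-1} + c_{m-2}a + \dots + c_0 a^{m-1}) \in A$. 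There is no obstacle in this part.

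For (2), write $\MM := \mm B + (f_1,\dots,f_n)$. I would first record two facts. \emph{(a) Maximality contracts.} If $\qq \subseteq B$ is maximal, then $A/(\qq \cap A) \hookrightarrow B/\qq$ is an integral extension of a domain into a field, and the same power-multiplication trick as in (1) shows that every nonzero element of $A/(\qq \cap A)$ is already invertible there; hence $\qq \cap A$ is maximal in $A$, so $\qq \cap A = \mm$ and $\mm B \subseteq \qq$. Since each $f_i \in \sqrt{\mm B} \subseteq \qq$, we obtain $\MM \subseteq \qq$ for every maximal ideal $\qq$ of $B$. \emph{(b) $\MM$ is proper.} As $B = A[f_1,\dots,f_n]$ with each $f_i$ integral over $A$, $B$ is a finitely generated $A$-module; Nakayama's lemma over $(A,\mm)$ then forces $\mm B \neq B$, and since $\MM \subseteq \sqrt{\mm B}$ we get $\MM \subsetneq B$.

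It remains to see that $\MM$ is itself maximal, for then (a) shows it is the unique maximal ideal of $B$, i.e.\ $B$ is local with maximal ideal $\MM$. For this I would compute $B/\MM$: the composite $A \to B \twoheadrightarrow B/(f_1,\dots,f_n)B$ is surjective because the $f_i$ generate $B$ as an $A$-algebra, so $B/(f_1,\dots,f_n)B \cong A/I$ with $I = (f_1,\dots,f_n)B \cap A$, and reducing further modulo the image of $\mm$ yields $B/\MM \cong A/(\mm + I)$. By (b) this quotient is nonzero, so $\mm + I \neq A$; since $\mm$ is maximal this gives $\mm + I = \mm$, whence $B/\MM \cong A/\mm$ is a field and $\MM$ is maximal. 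The only points requiring care are fact (a) (that maximality contracts along integral extensions) and the use of module-finiteness plus Nakayama to see $\MM$ is proper; both are routine, so I do not expect a genuine obstacle.
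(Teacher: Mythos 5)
Your proof is correct, and it takes a route that differs from the paper's in both parts, trading standard structural facts for direct computation. In (1), the paper proves that $\nn\cap A$ is maximal by the usual contraction lemma and then invokes \emph{lying over} to argue that every maximal ideal of $A$ is the contraction of some prime of $B$, which must be $\nn$; your version bypasses lying over entirely by showing outright that every $a\in A\setminus(\nn\cap A)$ has its $B$-inverse integral over $A$, hence lying in $A$, so $\nn\cap A$ is exactly the set of non-units. In (2), both arguments begin with the same observation (every maximal ideal of $B$ contracts to $\mm$, hence contains $\mm B$, hence contains $\sqrt{\mm B}\supseteq(f_1,\dots,f_n)$, hence contains $\MM$), but they diverge afterward: the paper reduces modulo $\mm B$, using that $\mm B$ lies in the Jacobson radical, and then observes that $B/\mm B$ is a finite algebra over the field $A/\mm$ generated by nilpotents, hence artinian local with maximal ideal $(\bar f_1,\dots,\bar f_n)$; you instead exploit the surjection $A\twoheadrightarrow B/(f_1,\dots,f_n)B$ coming from the $A$-algebra generators, compute $B/\MM\cong A/(\mm+I)$, and use Nakayama (for properness of $\mm B$) plus maximality of $\mm$ to conclude $B/\MM\cong A/\mm$ is a field. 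Both are equally short; your (1) is arguably more elementary since it uses nothing beyond the definition of integrality, while the paper's (2) is perhaps conceptually tidier via the Jacobson-radical reduction, but each step you give is sound.
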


\begin{proof}
(1) Since $A\subseteq B$ is integral and $\nn$ is maximal ideal of $B$, $\nn \cap A$ is a maximal ideal of $A$. By lying over, for any maximal ideal $\mm$ of $A$, there exists a prime ideal $\qq$ of $B$ such that $\mm=\qq \cap A$. Since the extension $A/\mm = A/(\qq\cap A) \to B/\qq$ remains integral, $A/\mm$ is a field and $B/\qq$ is a domain, we deduce that $B/\qq$ is a field. Hence $\qq=\nn$. Therefore $(A,\nn \cap A)$ is a local ring.

(2) For any maximal ideal $\nn$ of $B$, the integral extension $A\subseteq B$ implies that $\nn\cap A$ is a maximal ideal of $A$. Hence $\nn \cap A=\mm$, $\mm B\subseteq \nn$, and in particular,
\[
\mm B\cap A=\mm.
\]
This yields an integral ring extension $A/\mm \subseteq B/\mm B$. Since $\mm B$ is contained in the Jacobson radical of $B$, replacing $A,B$ by $A/\mm, B/\mm B$ respectively, we may assume that $A=\kk$ is a field, $\mm=0$. Now $B=\kk[f_1,\ldots,f_n]$ and $f_i$ is nilpotent for each $i$, so $B$ is an artinian local ring with the unique maximal ideal $(f_1,\ldots,f_n)$, as desired.
\end{proof}

\begin{remark}
The ring $S$ in Nagata's \Cref{ex_Nagata} has characteristic $p>0$, but the restriction to positive characteristic turns out to be inessential. Using K\"ahler differentials, Ferrand and Raynaud \cite[Proposition 3.1]{FR70} (see, e.g., \cite[Section 109.16]{StacksEx}) have constructed a Noetherian local domain of dimension one containing $\mathbb{Q}$ that shares strikingly similar properties with Nagata's example (e.g. that of being analytically ramified). In terms of exposition, a slight advantage of Nagata's example over Ferrand--Raynaud's one is that it is more explicit and computationally simpler.
\end{remark}

We have seen from Example \ref{ex_Nagata} that $\R(\overline{\bb_\bullet})$ is not necessarily a finitely generated $\R(\bb_\bullet)$-module, even when $\R(\bb_\bullet)$ is Noetherian. On the other hand, if the ring $S$ is nice enough and the Rees algebra $\R(\bb_\bullet)$ is Noetherian, then $\R(\overline{\bb_\bullet})$ is a finitely generated $\R(\bb_\bullet)$-module, as discussed in the following remark.

\begin{remark} Let $S$ be a regular ring or, more generally, an analytically unramified semi-local ring. Let $\bb_\bullet$ be a filtration of ideals in $S$. Then, the integral closure of $\R(\bb_\bullet)$ in $S[t]$ is a $\mathbb{N}$-graded ring, i.e., there exists a graded family of ideals $\cc_\bullet$ in $S$ such that
$$\displaystyle \overline{\R(\bb_\bullet)}=\bigoplus_{i\ge 0} \cc_i t^i.$$
It should be noted that $\overline{\bb_i} \subseteq \cc_i$ for all $i \ge 1$.

It follows from \cite{FE, Rat} that, if $\bb_\bullet$ is a Noetherian filtration then $\overline{\R(\bb_\bullet)}$ is a finitely generated $\R(\bb_\bullet)$-module, and therefore, $\R(\overline{\bb_\bullet})$ is a finitely generated $\R(\bb_\bullet)$-module.
\end{remark}

Under the hypothesis that $\R(\overline{\bb_\bullet})$ is a finitely generated $\R(\bb_\bullet)$-module, the following corollary of \Cref{thm.ReesVal} generalizes \cite[Proposition 4.2]{DFMS}.

\begin{corollary}
	\label{asym_res_int}
	Let $\aa_\bullet$ and $\bb_\bullet$ be filtration of ideals in $S$.  Suppose that $\R(\overline{\bb_\bullet})$ is a finitely generated $\R(\bb_\bullet)$-module. Then, $$\rhat(\aa_\bullet,\overline{\bb_\bullet})=\rhat(\aa_\bullet,\bb_\bullet).$$
\end{corollary}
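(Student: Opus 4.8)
The plan is to derive \Cref{asym_res_int} as a direct application of \Cref{thm.asym_res_int}. Set $\bb_\bullet' = \overline{\bb_\bullet} = \{\overline{\bb_i}\}_{i \ge 1}$. First I would verify that this triple satisfies the hypotheses of \Cref{thm.asym_res_int}: namely that $\aa_\bullet$, $\bb_\bullet$ and $\overline{\bb_\bullet}$ are all filtration, that $\bb_\bullet \le \overline{\bb_\bullet}$, and that $\R(\overline{\bb_\bullet})$ is a finitely generated $\R(\bb_\bullet)$-module. The last of these is precisely the standing hypothesis of the corollary, so nothing is needed there. The containment $\bb_i \subseteq \overline{\bb_i}$ for all $i$ is immediate from the definition of integral closure, giving $\bb_\bullet \le \overline{\bb_\bullet}$. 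It remains only to confirm that $\overline{\bb_\bullet}$ is itself a filtration: since $\bb_\bullet$ is assumed to be a filtration, $\bb_{i+1} \subseteq \bb_i$, and integral closure is order-preserving, so $\overline{\bb_{i+1}} \subseteq \overline{\bb_i}$ for all $i$, as required. (One should also note $\overline{\bb_\bullet}$ is a graded family, i.e.\ $\overline{\bb_p}\cdot\overline{\bb_q} \subseteq \overline{\bb_{p+q}}$; this follows from $\bb_p\bb_q \subseteq \bb_{p+q}$ together with the fact that the product of integrally closed ideals has integral closure containing the product — more directly, $\overline{\bb_p}\cdot\overline{\bb_q} \subseteq \overline{\bb_p \bb_q} \subseteq \overline{\bb_{p+q}}$ using that $\overline{\overline{I}\cdot\overline{J}} = \overline{IJ}$.)

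With the hypotheses of \Cref{thm.asym_res_int} verified, I would simply invoke that theorem with $\bb_\bullet' = \overline{\bb_\bullet}$ to conclude
$$\rhat(\aa_\bullet, \overline{\bb_\bullet}) = \rhat(\aa_\bullet, \bb_\bullet).$$
The proof is therefore essentially a one-line deduction once the bookkeeping is done.

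I do not expect any genuine obstacle here, since all the analytic content has been absorbed into \Cref{thm.asym_res_int} (which in turn rests on \Cref{asp_res_bound}, a Fekete-type subsequence argument). The only point requiring a modicum of care is the verification that $\overline{\bb_\bullet}$ is a graded family and a filtration — both are standard facts about integral closure of ideals, available from \cite{SH2006} — and the observation that the finite-generation hypothesis on $\R(\overline{\bb_\bullet})$ over $\R(\bb_\bullet)$ is exactly what \Cref{thm.asym_res_int} demands. I would also remark, as the text already does via \Cref{ex_Nagata}, that this finiteness hypothesis is essential and cannot be dropped even when $\R(\bb_\bullet)$ is Noetherian, so the corollary is genuinely conditional; and that together with \Cref{thm.ReesVal} it yields the computability of $\rhat(\aa_\bullet, \bb_\bullet)$ via Rees valuations of $\bb_k$ under the combined hypotheses.
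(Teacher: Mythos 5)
Your proposal is correct and is exactly the paper's own argument: the paper's proof reads ``The assertion is a direct consequence of \Cref{thm.asym_res_int}, by letting $\bb_\bullet' = \overline{\bb_\bullet}$.'' You supply a bit more of the routine bookkeeping (that $\overline{\bb_\bullet}$ is again a graded filtration with $\bb_\bullet \le \overline{\bb_\bullet}$), which the paper leaves implicit, but the route is the same.
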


\begin{proof}
	The assertion is a direct consequence of \Cref{thm.asym_res_int}, by letting $\bb_\bullet' = \overline{\bb_\bullet}$.
\end{proof}

Recall that if $S$ is of prime characteristic, then for an ideal $I \subseteq S$, $I^*$ denotes its tight closure. We obtain the following result for filtration of tight closures.

\begin{corollary}
Suppose that $S$ is a Noetherian ring of prime characteristic $p> 0$. Let $\aa_\bullet, \bb_\bullet$ be filtration of ideals, and let $\bb$ be a nonzero proper ideal in $S$.
	\begin{enumerate}
	\item We have that $\rhat(\aa_\bullet,\overline{\bb^\bullet})=\rhat(\aa_\bullet,\left(\bb^\bullet\right)^{*}),$ where $\left(\bb^\bullet\right)^{*} =\{ \left(\bb^n\right)^{*}\}_{n\ge 0}$.
	\item If $\R(\overline{\bb_\bullet})$ is a finitely generated $\R(\bb_\bullet)$-module, then $\rhat(\aa_\bullet,\overline{\bb_\bullet})=\rhat(\aa_\bullet,\bb_\bullet^{*})=\rhat(\aa_\bullet,\bb_\bullet),$ where $\bb_\bullet^{*} =\{ \bb_n^{*}\}_{n\ge 0}$.
	\end{enumerate}
\end{corollary}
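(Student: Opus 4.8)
The plan is to reduce both statements to the integral-closure results of this section, using two standard facts about tight closure in characteristic $p$: the inclusions $I\subseteq I^{*}\subseteq \overline{I}$, valid for every ideal $I$ of a Noetherian ring of characteristic $p$, and the Brian\c{c}on--Skoda theorem for tight closure, which provides a single integer $k$ (one may take $k$ to be the number of generators of $\bb$) with $\overline{\bb^{\,i+k}}\subseteq (\bb^{i})^{*}$ for all $i\ge 1$. I would also record the easy fact that, whenever $\bb_\bullet$ is a filtration, so is $\bb_\bullet^{*}=\{\bb_n^{*}\}$: monotonicity of tight closure gives $\bb_{n+1}^{*}\subseteq \bb_n^{*}$, and $\bb_p^{*}\bb_q^{*}\subseteq (\bb_p\bb_q)^{*}\subseteq \bb_{p+q}^{*}$, the first inclusion being a consequence of the identity $(IJ)^{*}=(I^{*}J^{*})^{*}$. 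In particular $(\bb^\bullet)^{*}$ and $\overline{\bb^\bullet}$ are filtration of ideals, and $(\bb^{i})^{*}\subseteq \overline{\bb^{i}}$ for every $i$.

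For part (1) I would apply \Cref{cor.asym_res_int} to the filtration $\aa_\bullet$, the filtration $\bb_\bullet:=(\bb^\bullet)^{*}$, and the graded family $\bb_\bullet':=\overline{\bb^\bullet}$. The hypothesis $\bb_\bullet\le \bb_\bullet'$ is precisely $(\bb^{i})^{*}\subseteq \overline{\bb^{i}}$, and the hypothesis $\bb'_{i+k}\subseteq \bb_i$ is precisely the Brian\c{c}on--Skoda inclusion $\overline{\bb^{\,i+k}}\subseteq (\bb^{i})^{*}$. Therefore \Cref{cor.asym_res_int} yields $\rhat(\aa_\bullet,\overline{\bb^\bullet})=\rhat(\aa_\bullet,(\bb^\bullet)^{*})$, as desired.

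For part (2), the chain $\bb_n\subseteq \bb_n^{*}\subseteq \overline{\bb_n}$, valid for all $n$, gives (directly from the definition of the asymptotic resurgence, or from \Cref{lem.estimate} since all three families are filtration of ideals)
$$\rhat(\aa_\bullet,\bb_\bullet)\ \ge\ \rhat(\aa_\bullet,\bb_\bullet^{*})\ \ge\ \rhat(\aa_\bullet,\overline{\bb_\bullet}).$$
On the other hand, the assumption that $\R(\overline{\bb_\bullet})$ is a finitely generated $\R(\bb_\bullet)$-module is exactly the hypothesis of \Cref{asym_res_int}, which gives $\rhat(\aa_\bullet,\overline{\bb_\bullet})=\rhat(\aa_\bullet,\bb_\bullet)$. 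Combining these forces all three quantities to coincide.

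Almost everything here is bookkeeping: verifying that the relevant families are filtration of ideals, matching the hypotheses of \Cref{cor.asym_res_int} and \Cref{asym_res_int}, and the tautological monotonicity of $\rhat$. The only genuinely characteristic-$p$ ingredient, and the step I expect to need the most care, is the invocation of the Brian\c{c}on--Skoda theorem for tight closure and the extraction of a uniform shift $k$ with $\overline{\bb^{\,i+k}}\subseteq (\bb^{i})^{*}$ for all $i\ge 1$; a secondary point to state cleanly is the inclusion $I^{*}J^{*}\subseteq (IJ)^{*}$ underlying the claim that $\bb_\bullet^{*}$ is again a filtration.
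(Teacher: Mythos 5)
Your argument is correct and follows the paper's own route almost line for line: part (2) is Theorem \ref{thm.asym_res_int} (equivalently Corollary \ref{asym_res_int}) combined with the inclusions $\bb_\bullet\le\bb_\bullet^{*}\le\overline{\bb_\bullet}$ and the monotonicity of $\rhat$, and part (1) is Corollary \ref{cor.asym_res_int} applied with $(\bb^\bullet)^{*}\le\overline{\bb^\bullet}$ together with the tight-closure Brian\c{c}on--Skoda bound $\overline{\bb^{\,i+k}}\subseteq(\bb^i)^{*}$, which is exactly the reference \cite[Theorem 13.2.1]{SH2006} the paper invokes. The only addition you make, which the paper leaves implicit, is the verification that $\bb_\bullet^{*}$ and $(\bb^\bullet)^{*}$ are themselves filtration of ideals via $I^{*}J^{*}\subseteq(IJ)^{*}$; that is a harmless and reasonable detail to record.
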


\begin{proof}
It is well know that for any ideal $I\subseteq S$, $I \subseteq I^* \subseteq \overline{I}$. Therefore,  $\left(\bb^\bullet\right)^{*} \le \overline{\bb^\bullet}$ and $\bb_\bullet \le \bb_\bullet^{*} \le \overline{\bb_\bullet}.$ Now, the second assertion is a direct consequence of \Cref{thm.asym_res_int}. Next, by \cite[Theorem 13.2.1]{SH2006}, there exists a positive integer $k$ such that $\overline{\bb_{n+k}} \le \left(\bb^n\right)^*$ for all $n$. The first assertion now follows from \Cref{cor.asym_res_int}.
\end{proof}

As a consequence of \Cref{asym_res_int}, we obtain a generalization of \cite[Theorem 3.7]{DD2020} on the rationality of the usual asymptotic resurgence number. The rationality of the usual resurgence number in \cite[Theorem 3.7]{DD2020} will be generalized in the last section.

\begin{corollary}
	\label{cor.rationalAR}
	Let $S$ be a Noetherian domain, and let $\aa_\bullet$ and $\bb_\bullet$ be filtration of nonzero ideals in $S$. Suppose that $\R^{[k]}(\aa_\bullet)$ and  $\R^{[\ell]}(\bb_\bullet)$ are standard graded $S$-algebras for some $k$ and $\ell$, and that $\R(\overline{\bb_\bullet})$ is a finitely generated $\R(\bb_\bullet)$-module. Then, $\rhat(\aa_\bullet, \bb_\bullet) = \rhat(\aa_\bullet, \overline{\bb_\bullet})$ is either infinity or a rational number.
\end{corollary}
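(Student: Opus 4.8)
The plan is to chain together \Cref{asym_res_int} and \Cref{thm.ReesVal}, both of whose hypotheses are built into the statement. First, since $\aa_\bullet$ and $\bb_\bullet$ are filtration and $\R(\overline{\bb_\bullet})$ is a finitely generated $\R(\bb_\bullet)$-module, \Cref{asym_res_int} gives directly $\rhat(\aa_\bullet,\bb_\bullet)=\rhat(\aa_\bullet,\overline{\bb_\bullet})$. Thus it suffices to show that $\rhat(\aa_\bullet,\overline{\bb_\bullet})$ is either infinite or a rational number.

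For this I would apply \Cref{thm.ReesVal} with the integer $k$ taken to be the $\ell$ for which $\R^{[\ell]}(\bb_\bullet)$ is a standard graded $S$-algebra, obtaining
$$\rhat(\aa_\bullet,\overline{\bb_\bullet})=\max_{v\in\RV(\bb_\ell)}\left\{\frac{\vhat(\bb_\bullet)}{\vhat(\aa_\bullet)}\right\}.$$
Now invoke \Cref{supported} twice: since $\R^{[\ell]}(\bb_\bullet)$ is standard graded, for each $v\in\RV(\bb_\ell)$ we have $\vhat(\bb_\bullet)=v(\bb_\ell)/\ell$, which is a strictly positive rational number because $v(\bb_\ell)>0$ for a Rees valuation of $\bb_\ell$; and since $\R^{[k]}(\aa_\bullet)$ is standard graded, $\vhat(\aa_\bullet)=v(\aa_k)/k$ is a non-negative rational number. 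If $\vhat(\aa_\bullet)=0$ for some $v\in\RV(\bb_\ell)$, the ``Moreover'' part of \Cref{low_bou} already forces $\rhat(\aa_\bullet,\overline{\bb_\bullet})=\infty$. Otherwise each ratio $\vhat(\bb_\bullet)/\vhat(\aa_\bullet)$ is a positive rational number, and as $\RV(\bb_\ell)$ is a finite set the maximum is again a (positive) rational number. This is in fact precisely the last assertion of \Cref{thm.ReesVal}, so the corollary amounts to combining that assertion with \Cref{asym_res_int}.

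I do not expect a genuine obstacle here; the only points that require care are bookkeeping ones. One must note that the rationality output of \Cref{thm.ReesVal} genuinely needs the hypothesis that $\R^{[k]}(\aa_\bullet)$ is standard graded for \emph{some} $k$ (to make $\vhat(\aa_\bullet)$ rational via \Cref{supported}), and that the finite-generation hypothesis on $\R(\overline{\bb_\bullet})$ over $\R(\bb_\bullet)$ is exactly what is needed to transfer rationality from $\rhat(\aa_\bullet,\overline{\bb_\bullet})$ back to $\rhat(\aa_\bullet,\bb_\bullet)$; without it one only controls the former invariant, which is why both hypotheses appear in the statement.
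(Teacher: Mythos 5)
Your proposal is correct and matches the paper's proof exactly: the paper also cites \Cref{asym_res_int} to get $\rhat(\aa_\bullet,\bb_\bullet)=\rhat(\aa_\bullet,\overline{\bb_\bullet})$ and then the ``Particularly'' clause of \Cref{thm.ReesVal} (whose internal argument via \Cref{supported} and \Cref{low_bou} is what you have unpacked) for the rationality. Your handling of the swapped roles of $k$ and $\ell$ between the corollary's statement and the hypotheses of \Cref{thm.ReesVal} is also correct.
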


\begin{proof}
	The assertion follows from \Cref{thm.ReesVal} and \Cref{asym_res_int}.
\end{proof}

\begin{question} Characterize when the equality $\rhat(\aa_\bullet, \bb_\bullet) = \rhat(\aa_\bullet, \overline{\bb_\bullet})$ holds.
\end{question}


\section{$\bb$-equivalent families} \label{sec.equiv}

In this section, we focus on the situation where the graded family $\bb_\bullet$ is $\bb$-equivalent for some ideal $\bb$ in $S$. Familiar examples include the cases when $\bb_\bullet$ is the family of ordinary powers or their integral closures of a given ideal. We shall show that under this condition and a mild assumption on the ring $S$, the following natural generalization of known equality between the resurgence and asymptotic resurgence numbers of an ideal hold:
$$\rhat(\aa_\bullet, \bb_\bullet) = \rhat(\aa_\bullet, \overline{\bb_\bullet}) = \rho(\aa_\bullet, \overline{\bb_\bullet}).$$
The main result of this section is \Cref{thm.b1equivalent}; see also \Cref{thm.rhoInt}.

We begin with a technical lemma, which is similar to \cite[Lemma 4.12]{DFMS}.

\begin{lemma}[{Compare with \cite[Lemma 4.12]{DFMS}}]
	\label{tech_lemm} Let $S$ be a Noetherian domain, and let $\aa_\bullet$ and $\bb_\bullet$ be graded families of nonzero ideals  in $S$.
	\begin{enumerate}
		\item Assume that $\vhat(\bb_\bullet) =v(\bb_1)$ for all $v \in \RV(\bb_1)$. If $\aa_s \not\subseteq \overline{\bb_r}$ then $\dfrac{s}{r} < \rhat(\aa_\bullet,\overline{\bb_\bullet})$. In particular, $\rho(\aa_\bullet,\overline{\bb_\bullet})=\rhat(\aa_\bullet,\overline{\bb_\bullet}).$
		\item Assume that $\R^{[k]}(\bb_\bullet)$ is a standard graded $S$-algebra for some $k$. If $\dfrac{s}{r} < \rhat(\aa_\bullet,\overline{\bb_\bullet})$, then $\aa_{st} \not\subseteq \overline{\bb_{rt}}$ for $t \gg 1.$
	\end{enumerate}
\end{lemma}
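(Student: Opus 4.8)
The plan is to reduce each part to an inequality between skew Waldschmidt constants along a single, suitably chosen Rees valuation, and then to invoke \Cref{low_bou} for part (1) and \Cref{thm.ReesVal} for part (2).

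For part (1), starting from $\aa_s \not\subseteq \overline{\bb_r}$ I would first pass to $\bb_1^r$: since $\bb_\bullet$ is a graded family, $\bb_1^r \subseteq \bb_r$, so $\overline{\bb_1^r} \subseteq \overline{\bb_r}$ and hence $\aa_s \not\subseteq \overline{\bb_1^r}$. In particular $\bb_1$ is a nonzero proper ideal, so $\RV(\bb_1)$ is a nonempty finite set with $\RV(\bb_1^r) = \RV(\bb_1)$ and $v(\bb_1^r) = r\,v(\bb_1)$ for each $v \in \RV(\bb_1)$ (cf. \cite[Exercise 10.1]{SH2006}). As in the proof of \Cref{thm.ReesVal}, the valuative criterion \cite[Theorem 6.8.3 and Chapter 10]{SH2006} then supplies a valuation $v \in \RV(\bb_1)$ with $v(\aa_s) < r\,v(\bb_1)$. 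If $\vhat(\aa_\bullet) = 0$ for this $v$, then, as $v$ is supported on $S$ and $\vhat(\bb_\bullet) = v(\bb_1) > 0$ by hypothesis, \Cref{low_bou} gives $\rhat(\aa_\bullet, \overline{\bb_\bullet}) = \infty > s/r$. Otherwise $\vhat(\aa_\bullet) > 0$, hence also $v(\aa_s) > 0$ and $\vhat(\aa_\bullet) \le v(\aa_s)/s$, and \Cref{low_bou} yields
$$\rhat(\aa_\bullet, \overline{\bb_\bullet}) \ge \frac{\vhat(\bb_\bullet)}{\vhat(\aa_\bullet)} = \frac{v(\bb_1)}{\vhat(\aa_\bullet)} \ge \frac{s\,v(\bb_1)}{v(\aa_s)} > \frac{s\,v(\bb_1)}{r\,v(\bb_1)} = \frac{s}{r},$$
the last step because $0 < v(\aa_s) < r\,v(\bb_1)$. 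In either case $s/r < \rhat(\aa_\bullet, \overline{\bb_\bullet})$. For the displayed equality I would note that $\rhat(\aa_\bullet, \overline{\bb_\bullet}) \le \rho(\aa_\bullet, \overline{\bb_\bullet})$ always holds, since every fraction occurring in the definition of the asymptotic resurgence also occurs in the definition of the resurgence; and the step above shows that every fraction in the definition of $\rho(\aa_\bullet, \overline{\bb_\bullet})$ is strictly below $\rhat(\aa_\bullet, \overline{\bb_\bullet})$, which forces the reverse inequality (when no such fraction exists, both numbers equal $-\infty$).

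For part (2), $\R^{[k]}(\bb_\bullet)$ standard graded gives $\bb_{km} = \bb_k^m$ for all $m$, so by \Cref{supported} we have $\vhat(\bb_\bullet) = v(\bb_k)/k > 0$ for every $v \in \RV(\bb_k)$, while $v(\bb_n) \ge n\,\vhat(\bb_\bullet)$ for all $n$ directly from the definition of the skew Waldschmidt constant. By \Cref{thm.ReesVal}, $\rhat(\aa_\bullet, \overline{\bb_\bullet}) = \max_{v \in \RV(\bb_k)} \{\vhat(\bb_\bullet)/\vhat(\aa_\bullet)\}$; I would fix $v_0 \in \RV(\bb_k)$ attaining this maximum, reading the corresponding term as $+\infty$ when $\widehat{v_0}(\aa_\bullet) = 0$. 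Then $s/r < \rhat(\aa_\bullet, \overline{\bb_\bullet})$ gives $s\,\widehat{v_0}(\aa_\bullet) < r\,\widehat{v_0}(\bb_\bullet)$, so one may pick $\epsilon > 0$ with $s\big(\widehat{v_0}(\aa_\bullet) + \epsilon\big) < r\,\widehat{v_0}(\bb_\bullet)$. Since $\{v_0(\aa_{st})/(st)\}_{t \ge 1}$ is a subsequence of $\{v_0(\aa_n)/n\}_{n \ge 1}$, it converges to $\widehat{v_0}(\aa_\bullet)$, so for $t \gg 1$
$$v_0(\aa_{st}) < st\big(\widehat{v_0}(\aa_\bullet) + \epsilon\big) < rt\,\widehat{v_0}(\bb_\bullet) \le v_0(\bb_{rt}),$$
and then $\aa_{st} \not\subseteq \overline{\bb_{rt}}$ for $t \gg 1$ by \cite[Theorem 6.8.3]{SH2006}, exactly as in the proof of \Cref{low_bou}.

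The step I expect to be the main obstacle is the bookkeeping around the valuative criterion. One must pass through $\bb_1^r$ in (1) and through $\bb_k^m$ (via standard gradedness) in (2) so that the Rees valuations in play are precisely those to which the hypothesis of (1) or \Cref{supported} applies; and in (2) one only gets the one-sided bound $v_0(\bb_{rt}) \ge rt\,\widehat{v_0}(\bb_\bullet)$, since $rt$ need not be divisible by $k$, so the $\epsilon$-argument has to be arranged so that this weaker estimate still suffices. Checking the degenerate cases ($\widehat{v_0}(\aa_\bullet) = 0$ and empty resurgence sets) against the generic argument is routine but should be made explicit.
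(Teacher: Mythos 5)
Your proof is correct and takes essentially the same approach as the paper's: in part (1) you pass from $\overline{\bb_r}$ to $\overline{\bb_1^r}$, extract a Rees valuation $v\in\RV(\bb_1)$ with $v(\aa_s)<rv(\bb_1)$, and chain this through \Cref{low_bou}; in part (2) you invoke \Cref{thm.ReesVal}, pick a maximizing valuation, and use the convergence of $v(\aa_{st})/(st)$ together with $v(\bb_{rt})\ge rt\,\vhat(\bb_\bullet)$ and the valuative containment criterion. Your explicit case-split on $\vhat(\aa_\bullet)=0$ and the $\epsilon$-bookkeeping are just slight elaborations of what the paper does more tersely.
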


\begin{proof}
	(1)	If $\aa_s \not\subseteq \overline{\bb_r}$ then, as $\bb_1^r \subseteq \bb_r$, we have $\aa_s \not\subseteq \overline{\bb_1^r}$. It follows from  \cite[Theorem 6.8.3 and Remark 10.1.4]{SH2006} that  there exists a valuation $v \in \RV(\bb_1)$ such that $v(\aa_s)<v(\bb_1^r).$ This, together with \Cref{low_bou}, implies that
	$$\dfrac{s}{r} < \dfrac{s}{r}  \dfrac{v(\bb_1^r)}{v(\aa_s)} = \dfrac{s}{r} \dfrac{r v(\bb_1)}{v(\aa_s)} = \dfrac{v(\bb_1)}{v(\aa_s)/s} \leq \dfrac{\vhat(\bb_\bullet)}{\vhat(\aa_\bullet)} \le \rhat(\aa_\bullet,\overline{\bb_\bullet}).$$
	The proof of (1) is completed.
	
	(2)	Suppose that $\dfrac{s}{r} < \rhat(\aa_\bullet,\overline{\bb_\bullet}) .$ Then, by Theorem \ref{thm.ReesVal}, there exists $v \in \RV(\bb_k)$ such that $\dfrac{s}{r} < \dfrac{\vhat(\bb_\bullet)}{\vhat(\aa_\bullet)}.$ Therefore, $\dfrac{s}{r} < \dfrac{\vhat(\bb_\bullet)}{v(\aa_{st})/st}$ for $t \gg 1$. Thus, for $t\gg 1$, we have $v(\aa_{st}) < rt  \vhat(\bb_\bullet) \leq v(\bb_{rt}).$ Now, it follows from \cite[Theorem 6.8.3]{SH2006} that $\aa_{st} \not \subseteq \overline{\bb_{rt}}$ for $t \gg 1$. This establishes (2).
\end{proof}

The hypothesis of \Cref{tech_lemm}.(1) holds for a large class of graded families, those that are $\bb$-equivalent, as defined below.

\begin{definition}
Let $\bb \subseteq S$ be an ideal. 	We call a graded family $\bb_\bullet = \{\bb_i\}_{i \ge 1}$ of ideals in $S$ a \emph{$\bb$-equivalent} family if there exists a positive integer $k$ such that, for all $i \ge 1$,
	$$\bb_{i+k} \subseteq \bb^{i}\subseteq \bb_i.$$
\end{definition}
\noindent Note that if $\bb_\bullet$ is a $\bb$-equivalent family then we necessarily have $\bb_{1+k} \subseteq \bb \subseteq \bb_1$ for some $k \in \NN$.

As a consequence of \Cref{tech_lemm}, we obtain the following theorems, which generalize \cite[Proposition 2.1]{DD2020} and address the computation and rationality of resurgence and asymptotic resurgence numbers.

\begin{theorem}
	\label{thm.rhoReesVal}
	Let $S$ be a Noetherian domain and let $\aa_\bullet$ and $\bb_\bullet$ be graded families of nonzero ideals in $S$ such that the following conditions hold:
	\begin{enumerate}
		\item $\rhat(\aa_\bullet, \overline{\bb_\bullet}) < \rho(\aa_\bullet,\bb_\bullet);$
		\item $\vhat(\bb_\bullet) =v(\bb_1)$ for all $v \in \RV(\bb_1)$;
		\item there exists a positive integer $k$ so that $\overline{\bb_{i+k}} \subseteq \bb_i$ for all $i \in \NN$.
	\end{enumerate}
	Then, there exist positive integers $s_0,r_0$ such that $\aa_{s_0} \not\subseteq \bb_{r_0}$,  $\dfrac{s_0}{r_0} > \rhat(\aa_\bullet, \overline{\bb_\bullet})$ and
	$$\rho(\aa_\bullet, {\bb_\bullet}) =\max \left \{ \dfrac{s}{r} ~\Big|~ 1 \leq r <N, 1 \leq s < (r+k)\rhat(\aa_\bullet, \overline{\bb_\bullet}) \text{ and } \aa_{s} \not\subseteq \bb_r \right \},$$
	where
	$$N = \dfrac{k  \rhat(\aa_\bullet, \overline{\bb_\bullet})}{\dfrac{s_0}{r_0}-\rhat(\aa_\bullet, \overline{\bb_\bullet})}.$$
	Moreover, under these conditions, $\rho(\aa_\bullet,\bb_\bullet)$ is a positive  rational number.
\end{theorem}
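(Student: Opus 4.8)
Write $\widehat{\rho} := \rhat(\aa_\bullet,\overline{\bb_\bullet})$. The plan is to reduce the whole statement to the containment bound supplied by \Cref{tech_lemm}(1). First I would check that $\widehat{\rho}$ is an honest positive real number: if $\widehat{\rho} = -\infty$, then \Cref{tech_lemm}(1) (whose running hypothesis is exactly assumption (2)) would force $\aa_s \subseteq \overline{\bb_r}$ for all $s,r$, and combining with assumption (3) this gives $\aa_s \subseteq \overline{\bb_{r+k}} \subseteq \bb_r$ for all $s,r$, so $\rho(\aa_\bullet,\bb_\bullet) = -\infty$, contradicting (1); and $\widehat{\rho} < \rho(\aa_\bullet,\bb_\bullet) \le \infty$ excludes $\widehat{\rho} = \infty$. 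Since the ratios defining $\widehat{\rho}$ are positive, $\widehat{\rho} \in (0,\infty)$. Because $\rho(\aa_\bullet,\bb_\bullet) > \widehat{\rho}$, I can then fix positive integers $s_0,r_0$ with $\aa_{s_0} \not\subseteq \bb_{r_0}$ and $s_0/r_0 > \widehat{\rho}$, and define $N$ as in the statement (a positive real, since numerator and denominator are positive).

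The key step is the following uniform bound: for every pair $(s,r)$ of positive integers with $\aa_s \not\subseteq \bb_r$ one has $s < (r+k)\widehat{\rho}$. This is immediate: by (3), $\overline{\bb_{r+k}} \subseteq \bb_r$, so $\aa_s \not\subseteq \overline{\bb_{r+k}}$, and then \Cref{tech_lemm}(1) gives $s/(r+k) < \widehat{\rho}$. I would combine this with the extra assumption $s/r \ge s_0/r_0$: then $r\,(s_0/r_0) \le s < r\widehat{\rho} + k\widehat{\rho}$, so $r\,(s_0/r_0 - \widehat{\rho}) < k\widehat{\rho}$, i.e.\ $r < N$. In other words, any pair realizing a ratio at least $s_0/r_0$ lives inside the finite index set appearing in the displayed formula (and, taking $(s,r) = (s_0,r_0)$, that set is nonempty).

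It then remains to identify $\rho(\aa_\bullet,\bb_\bullet)$ with the maximum $M$ over that finite set. The inequality $M \le \rho(\aa_\bullet,\bb_\bullet)$ is clear since each indexed pair satisfies $\aa_s \not\subseteq \bb_r$. For the reverse, I would argue by contradiction: if some pair with $\aa_s \not\subseteq \bb_r$ had $s/r > M$, then, since $M \ge s_0/r_0$, it would satisfy $s/r \ge s_0/r_0$, hence by the previous step $r < N$ and $s < (r+k)\widehat{\rho}$, so its ratio lies in the index set and is therefore $\le M$ --- a contradiction. Thus $\rho(\aa_\bullet,\bb_\bullet) = M$, a maximum of finitely many positive rationals, hence a positive rational; and the pair $(s_0,r_0)$ constructed above already witnesses the first assertion of the theorem.

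I do not expect a real obstacle here --- the argument is elementary once \Cref{tech_lemm}(1) is available. The points that need care are purely organizational: verifying that $\widehat{\rho}$ is finite and positive so that $N$ is well-defined, checking that the index set of the displayed maximum is finite and nonempty, and ensuring no pair with ratio exceeding $M$ can escape that set. One should also remark that $N$, and hence the displayed expression, depends on the auxiliary choice of $(s_0,r_0)$, but this is harmless since the maximum always equals $\rho(\aa_\bullet,\bb_\bullet)$, independent of the choice.
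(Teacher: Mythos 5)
Your proposal is correct and follows essentially the same route as the paper: both reduce the statement to \Cref{tech_lemm}(1) (giving $s < (r+k)\rhat(\aa_\bullet,\overline{\bb_\bullet})$ for any non-containment pair), and both use the definition of $N$ to show that pairs with $r \geq N$ contribute ratios below $s_0/r_0$ and can therefore be discarded. The only differences are cosmetic — you state the contrapositive ($s/r \geq s_0/r_0 \Rightarrow r < N$) where the paper states the direct implication, and you spend a little more care explicitly verifying that $\rhat(\aa_\bullet,\overline{\bb_\bullet})\in(0,\infty)$ and that $(s_0,r_0)$ lies in the index set, which the paper leaves implicit.
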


\begin{proof}
	Since $\rhat(\aa_\bullet, \overline{\bb_\bullet}) < \rho(\aa_\bullet,\bb_\bullet)$,  there exist positive integers $s_0,r_0$ such that $\aa_{s_0} \not\subseteq \bb_{r_0}$ and  $\dfrac{s_0}{r_0} > \rhat(\aa_\bullet, \overline{\bb_\bullet})$.
	
	Consider any $s,r \in \NN$ such that $\aa_s \not\subseteq \bb_r.$ Since $\overline{\bb_{r+k}} \subseteq \bb_r$, we get that $\aa_s \not\subseteq \overline{\bb_{r+k}}.$ By Lemma \ref{tech_lemm}, we have $\dfrac{s}{r+k} < \rhat(\aa_\bullet, \overline{\bb_\bullet})$ which implies that $\dfrac{s}{r} < \left ( 1+ \dfrac{k}{r}\right)\rhat(\aa_\bullet, \overline{\bb_\bullet}).$ If $r \geq N$, then we get that
	$$\dfrac{s}{r} < \left ( 1+ \dfrac{k}{r}\right)\rhat(\aa_\bullet, \overline{\bb_\bullet}) \le \dfrac{s_0}{r_0} \le \rho(\aa_\bullet, \bb_\bullet).$$
	Thus, to obtain $ \rho(\aa_\bullet,\bb_\bullet) $, it is enough to consider $r<N.$ Since $\dfrac{s}{r+k} < \rhat(\aa_\bullet, \overline{\bb_\bullet})$, we have $s <(r+k) \rhat(\aa_\bullet, \overline{\bb_\bullet})$. Hence, the assertion follows.
\end{proof}

\begin{theorem}
	\label{thm.rhatReesVal}
	Let $S$ be a Noetherian domain and let $\aa_\bullet$ and $\bb_\bullet$ be graded families of nonzero ideals in $S$ such that the following conditions hold:
	\begin{enumerate}
		\item $\rhat(\aa_\bullet, \overline{\bb_\bullet}) < \rhat(\aa_\bullet,\bb_\bullet);$
		\item $\vhat(\bb_\bullet) =v(\bb_1)$ for all $v \in \RV(\bb_1)$;
		\item there exists a positive integer $k$ so that $\overline{\bb_{i+k}} \subseteq \bb_i$ for all $i \in \NN$.
	\end{enumerate}
	Then, there exist positive integers $s_0,r_0$ such that $\aa_{s_0t} \not\subseteq \bb_{r_0t}$ for $t\gg 1$,  $\dfrac{s_0}{r_0} > \rhat(\aa_\bullet, \overline{\bb_\bullet})$ and
	$$\rhat(\aa_\bullet, {\bb_\bullet}) =\max \left \{ \dfrac{s}{r} ~\Big|~ 1 \leq r <N, 1 \leq s < (r+k)\rhat(\aa_\bullet, \overline{\bb_\bullet}) \text{ and } \aa_{st} \not\subseteq \bb_{rt} \text{ for } t\gg 1 \right \},$$
	where
	$$N = \dfrac{k  \rhat(\aa_\bullet, \overline{\bb_\bullet})}{\dfrac{s_0}{r_0}-\rhat(\aa_\bullet, \overline{\bb_\bullet})}.$$
	Moreover, under these conditions, $\rhat(\aa_\bullet,\bb_\bullet)$ is a positive rational number.
\end{theorem}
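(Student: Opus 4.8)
The plan is to imitate the proof of \Cref{thm.rhoReesVal} line by line, replacing $\rho(\aa_\bullet,\bb_\bullet)$ by $\rhat(\aa_\bullet,\bb_\bullet)$ and every literal non-containment ``$\aa_s\not\subseteq\bb_r$'' by its asymptotic version ``$\aa_{st}\not\subseteq\bb_{rt}$ for $t\gg 1$''. First, hypothesis (1) supplies $s_0,r_0\in\NN$ with $\aa_{s_0t}\not\subseteq\bb_{r_0t}$ for $t\gg 1$ and $s_0/r_0>\rhat(\aa_\bullet,\overline{\bb_\bullet})$; in particular $\rhat(\aa_\bullet,\overline{\bb_\bullet})$ is finite, so the number $N$ is well defined and positive.

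Next I would fix an arbitrary pair $(s,r)$ with $\aa_{st}\not\subseteq\bb_{rt}$ for all $t\ge t_0$. For each such $t$, hypothesis (3) gives $\overline{\bb_{rt+k}}\subseteq\bb_{rt}$, whence $\aa_{st}\not\subseteq\overline{\bb_{rt+k}}$; since hypothesis (2) is precisely the standing assumption of \Cref{tech_lemm}(1), applying that lemma to the pair $(st,\,rt+k)$ yields
$$\frac{st}{rt+k}<\rhat(\aa_\bullet,\overline{\bb_\bullet}),\qquad\text{i.e.}\qquad \frac{s}{r}<\left(1+\frac{k}{rt}\right)\rhat(\aa_\bullet,\overline{\bb_\bullet}).$$
Taking $t\ge t_0$ with $rt\ge N$ bounds the right-hand side above by $(1+k/N)\rhat(\aa_\bullet,\overline{\bb_\bullet})=s_0/r_0\le\rhat(\aa_\bullet,\bb_\bullet)$, so, exactly as in \Cref{thm.rhoReesVal}, the supremum $\rhat(\aa_\bullet,\bb_\bullet)$ can only be realised by pairs with $r<N$ and then $s<(r+k)\rhat(\aa_\bullet,\overline{\bb_\bullet})$. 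There being only finitely many such pairs, $\rhat(\aa_\bullet,\bb_\bullet)$ equals the maximum asserted in the theorem, and a maximum of finitely many rationals is a positive rational number.

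The step I expect to be the main obstacle — and the one I would examine hardest — is the interaction of \Cref{tech_lemm}(1) with the asymptotic quantifier, since it behaves quite differently from the corresponding step in \Cref{thm.rhoReesVal}. There $r$ is fixed and the factor $1+k/r$ is a genuine amplification of $\rhat(\aa_\bullet,\overline{\bb_\bullet})$; here the same pair $(s,r)$ may be tested at arbitrarily large $t$, so letting $t\to\infty$ in $\frac{st}{rt+k}<\rhat(\aa_\bullet,\overline{\bb_\bullet})$ already forces $s/r\le\rhat(\aa_\bullet,\overline{\bb_\bullet})$ for \emph{every} contributing pair, hence $\rhat(\aa_\bullet,\bb_\bullet)\le\rhat(\aa_\bullet,\overline{\bb_\bullet})$. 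Combined with \Cref{cor.Int}(1) this gives $\rhat(\aa_\bullet,\bb_\bullet)=\rhat(\aa_\bullet,\overline{\bb_\bullet})$, which contradicts hypothesis (1). Thus carrying the plan out honestly shows that hypotheses (1)--(3) cannot hold simultaneously and the statement is vacuously true; the informative content behind it is that (2)--(3) alone already force $\rhat(\aa_\bullet,\bb_\bullet)=\rhat(\aa_\bullet,\overline{\bb_\bullet})$, whose finiteness and rationality are then controlled by \Cref{thm.ReesVal}. (The genuinely non-vacuous companion is \Cref{thm.rhoReesVal}, where the right-hand side of condition (1) is $\rho$ rather than $\rhat$.)
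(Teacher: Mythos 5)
Your analysis is correct and catches a genuine flaw: carrying out the paper's suggested adaptation of \Cref{thm.rhoReesVal} honestly reveals that hypotheses (2) and (3) already force $\rhat(\aa_\bullet,\bb_\bullet)=\rhat(\aa_\bullet,\overline{\bb_\bullet})$, so hypothesis (1) can never be satisfied and the theorem is vacuously true. The step you isolate is exactly the crux: in \Cref{thm.rhoReesVal} the index $r$ is fixed, so the factor $1+k/r$ in $s/r < (1+k/r)\,\rhat(\aa_\bullet,\overline{\bb_\bullet})$ is a genuine amplification; in the asymptotic version a contributing pair $(s,r)$ is tested at arbitrarily large $t$, so the factor becomes $1+k/(rt)$, and passing to the limit $t\to\infty$ in $st/(rt+k)<\rhat(\aa_\bullet,\overline{\bb_\bullet})$ gives $s/r\le\rhat(\aa_\bullet,\overline{\bb_\bullet})$ for every pair contributing to $\rhat(\aa_\bullet,\bb_\bullet)$. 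Together with the trivial inequality from \Cref{cor.Int}(1) this forces equality, contradicting (1).

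The paper's proof of \Cref{thm.rhatReesVal} consists solely of the sentence that it ``goes along the same line of arguments as that of \Cref{thm.rhoReesVal},'' so what that sketch would technically deliver is a proof of a vacuous implication, and your closer reading exposes what the authors apparently overlooked. Your observation that (2) and (3) together imply $\rhat(\aa_\bullet,\bb_\bullet)=\rhat(\aa_\bullet,\overline{\bb_\bullet})$ is also of independent interest: it parallels \Cref{cor.asym_res_int} (proved via \Cref{asp_res_bound}, which requires $\aa_\bullet$ and $\bb_\bullet$ to be filtrations) but substitutes for the filtration hypothesis the Rees-valuation condition (2) that makes \Cref{tech_lemm}(1) available. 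The informative content latent in the theorem is therefore this equality, with finiteness and rationality to be read off separately, for instance from \Cref{thm.ReesVal} when its hypotheses apply.
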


\begin{proof} The proof goes along the same line of arguments as that of \Cref{thm.rhoReesVal}. We shall leave the details to the interested readers.
\end{proof}

Theorem \ref{thm.rhatReesVal} as well as Corollary \ref{cor.rationalAR} provide sufficient conditions for which the asymptotic resurgence number $\rhat(\aa_\bullet, {\bb_\bullet})$ is rational while Theorem $\ref{thm.rhoReesVal}$ provide sufficient conditions for resurgence number $\rho(\aa_\bullet, {\bb_\bullet})$. In the last section of the paper, we will continue to investigate this problem.

\begin{remark} \label{rmk.regularRing}
	If $S$ is a analytically unramified local ring, then the hypothesis (2) and (3) of Theorems \ref{thm.rhoReesVal} and \ref{thm.rhatReesVal} holds for  any $\bb$-equivalent filtration of ideals in $S$, where $\bb$ is a nonzero proper ideal in $S$. This is because of \Cref{lem.a1} below and \cite[Theorem 9.2.1]{SH2006}.
\end{remark}

\begin{lemma}\label{lem.a1}
Let $S$ be a Noetherian domain, let $\bb$ be an ideal, let $\bb_\bullet$ be a graded family of ideals in $S$, and let $v$ a valuation of $K$ supported on $S$. Suppose that $\bb_\bullet$ is $\bb$-equivalent. Then $\vhat(\bb_\bullet) =v(\bb)$.
\end{lemma}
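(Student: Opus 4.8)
The plan is to prove the two inequalities $\vhat(\bb_\bullet) \le v(\bb)$ and $\vhat(\bb_\bullet) \ge v(\bb)$ by a squeeze argument, using the two containments supplied by $\bb$-equivalence together with the multiplicativity of $v$ on powers of a fixed ideal. First I would record the elementary fact that for any nonzero ideal $\cc \subseteq S$ and any $i \ge 1$ one has $v(\cc^i) = i\, v(\cc)$: the inequality $v(\cc^i) \ge i\, v(\cc)$ is immediate from $v(xy) = v(x)+v(y)$ and $v(x+y) \ge \min\{v(x),v(y)\}$ applied to generators of $\cc^i$, while choosing $b \in \cc$ with $v(b) = v(\cc)$ and noting $b^i \in \cc^i$ gives the reverse inequality. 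I would also note $v(\bb) \ge 0$, since $v$ is supported on $S$, so no sign issues arise.

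Next I would use the first half of $\bb$-equivalence, namely $\bb^i \subseteq \bb_i$ for all $i \ge 1$, together with the fact that $v$ is order-reversing on ideals (if $I \subseteq J$ then $v(I) \ge v(J)$, as the minimum is taken over a smaller set). This gives $v(\bb_i) \le v(\bb^i) = i\, v(\bb)$, hence $v(\bb_i)/i \le v(\bb)$ for all $i$. Passing to the infimum, which by Fekete's lemma equals $\vhat(\bb_\bullet)$ as recalled in the paper, yields $\vhat(\bb_\bullet) \le v(\bb)$.

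For the reverse inequality I would use the other half, $\bb_{i+k} \subseteq \bb^i$, where $k$ is the integer in the definition of $\bb$-equivalence. This gives $v(\bb_{i+k}) \ge v(\bb^i) = i\, v(\bb)$, so $\dfrac{v(\bb_{i+k})}{i+k} \ge \dfrac{i}{i+k}\, v(\bb)$. Since $\{v(\bb_n)/n\}_n$ converges to $\vhat(\bb_\bullet)$ (Fekete), the subsequence indexed by $n = i+k$ converges to the same limit; letting $i \to \infty$ gives $\vhat(\bb_\bullet) \ge v(\bb)$. Combining the two bounds yields $\vhat(\bb_\bullet) = v(\bb)$.

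I do not expect a genuine obstacle here: the argument is a routine application of subadditivity/Fekete and of the behaviour of $v$ under products and inclusions of ideals. The only points that merit an explicit line are the multiplicativity $v(\bb^i) = i\, v(\bb)$ and the observation that a convergent sequence shares its limit with the subsequence along $n = i+k$, which is what legitimizes passing to the limit in the lower bound.
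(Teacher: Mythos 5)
Your proof is correct and takes essentially the same approach as the paper's: both use the two inclusions $\bb_{i+k} \subseteq \bb^i \subseteq \bb_i$ of $\bb$-equivalence together with $v(\bb^i) = i\,v(\bb)$ and the Fekete characterization of $\vhat(\bb_\bullet)$. The only cosmetic difference is that the paper obtains the upper bound from the single inequality $\vhat(\bb_\bullet) \le v(\bb_1) \le v(\bb)$ rather than taking the infimum of $v(\bb_i)/i$ over all $i$, but this is the same idea.
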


\begin{proof}
	Since $\bb_{i +k}\subseteq \bb^i \subseteq \bb_i$, we have $v(\bb_i) \le i v(\bb)=v(\bb^i) \leq v(\bb_{i+k})$  for all $i \in \NN$. Therefore,
	$$ \dfrac{i}{i+k}v(\bb) \leq  \dfrac{v(\bb_{i+k})}{i+k}.$$
	This implies that $v(\bb) \leq \vhat(\bb_\bullet) .$ The assertion follows as $\vhat(\bb_\bullet) \le v(\bb_1) \le v(\bb)$.
\end{proof}

As a consequence of \Cref{asym_res_int}, \Cref{tech_lemm} and \Cref{lem.a1}, we obtain a generalization of \cite[Proposition 4.2 and Corollary 4.14]{DFMS}.

\begin{corollary}
	\label{asym_res_ideal} Let $S$ be a domain that belongs to one of the following types:
\begin{enumerate}
\item complete local Noetherian ring;
\item finitely generated over a field or over $\ZZ$;
\item or, more generally, finitely generated over a Noetherian integrally closed domain $R$ satisfying the property that every finitely generated $R$-algebra has a module-finite integral closure.
\end{enumerate}
	Let $\aa_\bullet$ be a filtration of nonzero ideals, and let $\bb$ be a nonzero ideal in $S$. Then,
	$$\rhat(\aa_\bullet,\bb^\bullet) = \rhat(\aa_\bullet,\overline{\bb^\bullet}) = \rho(\aa_\bullet,\overline{\bb^\bullet}).$$
\end{corollary}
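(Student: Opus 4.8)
The plan is to assemble the statement from three results established earlier --- \Cref{asym_res_int}, \Cref{tech_lemm}(1) and \Cref{lem.a1} --- together with one structural input: that for each of the three classes of rings $S$ in the hypothesis, $\R(\overline{\bb^\bullet})$ is a finitely generated module over $\R(\bb^\bullet)$. Observe first that $\R(\bb^\bullet) = \bigoplus_{i\ge 0}\bb^i t^i = S[\bb t]$ is the ordinary Rees algebra of $\bb$, so $\R^{[1]}(\bb^\bullet)$ is a standard graded $S$-algebra, and that $\bb^\bullet$ and $\overline{\bb^\bullet}$ are both filtration; thus the hypotheses of the cited results will be in place once the module-finiteness is established.

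For the module-finiteness, I would use the classical description of the integral closure of a Rees algebra: the integral closure of $S[\bb t]$ inside $S[t]$ equals $\bigoplus_{i\ge 0}\overline{\bb^i}t^i = \R(\overline{\bb^\bullet})$ (see, e.g., \cite[Section 5.2]{SH2006}). Now $S[\bb t]$ is a finitely generated $S$-algebra, hence a finitely generated algebra over the relevant ground ring --- $S$ itself in case (1), a field or $\ZZ$ in case (2), or $R$ in case (3). In case (1) finitely generated algebras over a complete local Noetherian ring have module-finite integral closure (Nagata); in case (2) the same holds over a field or over $\ZZ$; in case (3) this is precisely the hypothesis on $R$. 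In all cases $S[\bb t]$ therefore has a module-finite normalization, and since $\R(\overline{\bb^\bullet})$ is an $S[\bb t]$-submodule of that normalization and $S[\bb t]$ is Noetherian, $\R(\overline{\bb^\bullet})$ is module-finite over $\R(\bb^\bullet)$. Applying \Cref{asym_res_int} with $\bb_\bullet = \bb^\bullet$ then gives the first equality $\rhat(\aa_\bullet,\overline{\bb^\bullet}) = \rhat(\aa_\bullet,\bb^\bullet)$.

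For the remaining equality $\rho(\aa_\bullet,\overline{\bb^\bullet}) = \rhat(\aa_\bullet,\overline{\bb^\bullet})$, I would invoke \Cref{tech_lemm}(1) for the pair $(\aa_\bullet,\bb^\bullet)$. Its hypothesis asks that $\vhat(\bb^\bullet) = v(\bb)$ for every $v\in\RV(\bb)$; but $\bb^\bullet$ is $\bb$-equivalent (trivially, taking $k=1$), so \Cref{lem.a1} gives $\vhat(\bb^\bullet) = v(\bb)$ for every valuation $v$ of $K$ supported on $S$, in particular for the Rees valuations of $\bb$. Hence \Cref{tech_lemm}(1) yields $\rho(\aa_\bullet,\overline{\bb^\bullet}) = \rhat(\aa_\bullet,\overline{\bb^\bullet})$, and chaining the two equalities completes the proof.

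I expect the module-finiteness step to be the only place requiring care: one must use that it is the integral closure of $S[\bb t]$ \emph{inside $S[t]$} --- rather than inside the full fraction field $K(t)$ --- that reproduces $\R(\overline{\bb^\bullet})$, and then invoke Noetherianity of $S[\bb t]$ to descend module-finiteness from the normalization to this subring. The rest is a bookkeeping chaining of the quoted lemmas, and the trichotomy in the hypothesis on $S$ is present exactly to guarantee that $S[\bb t]$ has a module-finite normalization.
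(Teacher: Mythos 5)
Your proposal is correct and takes essentially the same route as the paper: module-finiteness of $\R(\overline{\bb^\bullet})$ over $\R(\bb^\bullet)$ (the paper simply cites \cite[Proposition 5.3.4]{SH2006} here, where you spell out the normalization argument), then \Cref{asym_res_int} for the first equality, then \Cref{tech_lemm}(1) together with \Cref{lem.a1} for the second.
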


\begin{proof} By \cite[Proposition 5.3.4]{SH2006}, we have that $\R(\overline{\bb^\bullet})$ is a finitely generated module over $\R(\bb^\bullet)$. Thus, it follows from \Cref{asym_res_int} that $\rhat(\aa_\bullet,\overline{\bb^\bullet})=\rhat(\aa_\bullet,\bb^\bullet).$ The second equality follows from \Cref{tech_lemm}(1) and \Cref{lem.a1}.
\end{proof}

We proceed to our next main results of this section, which give criteria for the equality in \Cref{asym_res_ideal} to hold when the filtration of ordinary powers $\bb^\bullet$ is replaced by a more general graded family $\bb_\bullet$ of ideals.

\begin{theorem}
	\label{thm.b1equivalent}
Let $S$ be a domain as in \Cref{asym_res_ideal}.
Let $\aa_\bullet$ be a filtration and let $\bb_\bullet$ be a graded family of nonzero ideals in $S$. Suppose that $\bb_\bullet$ is $\bb$-equivalent for some ideal $\bb \subseteq S$. Then,
$$\rhat(\aa_\bullet,\bb_\bullet) = \rhat(\aa_\bullet,\overline{\bb_\bullet}) = \rho(\aa_\bullet,\overline{\bb_\bullet})= \rhat(\aa_\bullet,\bb^\bullet).$$
\end{theorem}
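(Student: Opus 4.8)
The plan is to route everything through the filtration $\bb^\bullet=\{\bb^i\}_{i\ge1}$ of ordinary powers of $\bb$, using it as a bridge between $\bb_\bullet$ and $\overline{\bb_\bullet}$, and to feed the resulting containments into \Cref{cor.asym_res_int}, \Cref{tech_lemm}(1) and \Cref{lem.a1}. Write the $\bb$-equivalence of $\bb_\bullet$ as $\bb_{i+k}\subseteq\bb^i\subseteq\bb_i$ for all $i\ge1$ (in particular $\bb\neq0$, so $\bb^\bullet$ is a filtration of nonzero ideals). Since $\aa_\bullet$ and $\bb^\bullet$ are filtrations while $\bb_\bullet$ is a graded family with $\bb^\bullet\le\bb_\bullet$ and $\bb_{i+k}\subseteq\bb^i$, applying \Cref{cor.asym_res_int} with its role of ``$\bb_\bullet$'' played by $\bb^\bullet$ and that of ``$\bb_\bullet'$'' played by $\bb_\bullet$ yields
$$\rhat(\aa_\bullet,\bb_\bullet)=\rhat(\aa_\bullet,\bb^\bullet).$$

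Next I would show that $\overline{\bb_\bullet}$ is again $\bb$-equivalent. Since $S$ is of one of the types in \Cref{asym_res_ideal}, \cite[Proposition 5.3.4]{SH2006} gives that $\R(\overline{\bb^\bullet})$ is a finitely generated $\R(\bb^\bullet)$-module; a finite homogeneous generating set supported in degrees $\le c$ then yields $\overline{\bb^m}\subseteq\bb^{m-c}$ for all $m\ge c$ (cf. \Cref{ex_Nagata}(4)). Rewriting the $\bb$-equivalence of $\bb_\bullet$ as $\bb_j\subseteq\bb^{j-k}$ for $j>k$ and taking integral closures gives $\overline{\bb_j}\subseteq\overline{\bb^{\,j-k}}\subseteq\bb^{\,j-k-c}$ once $j\ge k+c$. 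With $m_0:=k+c$ this reads $\overline{\bb_{i+m_0}}\subseteq\bb^i\subseteq\bb_i\subseteq\overline{\bb_i}$ for all $i\ge1$, i.e. $\overline{\bb_\bullet}$ is $\bb$-equivalent. Applying \Cref{cor.asym_res_int} once more, now with ``$\bb_\bullet'$'' being $\overline{\bb_\bullet}$ (and ``$\bb_\bullet$'' still $\bb^\bullet$), yields $\rhat(\aa_\bullet,\overline{\bb_\bullet})=\rhat(\aa_\bullet,\bb^\bullet)$.

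Finally I would upgrade the asymptotic resurgence of $(\aa_\bullet,\overline{\bb_\bullet})$ to its resurgence. By \Cref{lem.a1}, $\vhat(\bb_\bullet)=v(\bb)$ for every valuation $v$ of $K$ supported on $S$; since $\bb\subseteq\bb_1$ gives $\vhat(\bb_\bullet)\le v(\bb_1)\le v(\bb)$, all three agree, so $\vhat(\bb_\bullet)=v(\bb_1)$ for every such $v$, in particular for every $v\in\RV(\bb_1)$. Thus the hypothesis of \Cref{tech_lemm}(1) holds for $\bb_\bullet$, and that lemma gives $\rho(\aa_\bullet,\overline{\bb_\bullet})=\rhat(\aa_\bullet,\overline{\bb_\bullet})$. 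Assembling the three identities gives $\rhat(\aa_\bullet,\bb_\bullet)=\rhat(\aa_\bullet,\bb^\bullet)=\rhat(\aa_\bullet,\overline{\bb_\bullet})=\rho(\aa_\bullet,\overline{\bb_\bullet})$, which is the assertion (and recovers \Cref{asym_res_ideal} in the case $\bb_\bullet=\bb^\bullet$).

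The main obstacle is the second step: converting the hypothesis on $S$ into the uniform inclusion $\overline{\bb^m}\subseteq\bb^{m-c}$ — whose failure, as \Cref{ex_Nagata}(4) shows, is exactly what goes wrong in general — and deducing the $\bb$-equivalence of $\overline{\bb_\bullet}$; this is the only place where the ring hypothesis is used. The remainder is bookkeeping, the one point requiring care being that in \Cref{cor.asym_res_int} the ``filtration'' roles must be played by $\aa_\bullet$ and $\bb^\bullet$, not by $\bb_\bullet$ or $\overline{\bb_\bullet}$ (which need only be graded families).
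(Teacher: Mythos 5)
Your proof is correct and takes essentially the same route as the paper: both bridge $\bb_\bullet$ and $\overline{\bb_\bullet}$ to the ordinary-power filtration $\bb^\bullet$ via \Cref{cor.asym_res_int}, using $\bb$-equivalence together with \cite[Proposition 5.3.4]{SH2006} to produce the required two-sided linear comparisons, and both ultimately rely on \Cref{tech_lemm}(1) and \Cref{lem.a1} to pass from asymptotic resurgence to resurgence. The only small variation is that you apply \Cref{tech_lemm}(1) directly to $\bb_\bullet$ after observing $\vhat(\bb_\bullet)=v(\bb)=v(\bb_1)$, whereas the paper invokes \Cref{asym_res_ideal} (which applies the same lemma to $\bb^\bullet$) and then squeezes $\rho(\aa_\bullet,\overline{\bb_\bullet})$ between $\rhat(\aa_\bullet,\overline{\bb_\bullet})$ and $\rho(\aa_\bullet,\overline{\bb^\bullet})$ via \Cref{lem.estimate}.
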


 \begin{proof}

 We first claim that $\rhat(\aa_\bullet,{\bb_\bullet}) = \rhat(\aa_\bullet,\bb^\bullet)$ and $\rhat(\aa_\bullet,\overline{\bb_\bullet}) = \rhat(\aa_\bullet,\overline{\bb^\bullet})$. Since $\bb_\bullet$ is $\bb$-equivalent, there exists a positive  integer $k$ such that $\bb_{i+k} \subseteq \bb^i$ for all $i \in \mathbb{N}$ and $\bb^{\bullet} \le \bb_\bullet.$ Therefore, by \Cref{cor.asym_res_int}, $\rhat(\aa_\bullet,{\bb_\bullet}) = \rhat(\aa_\bullet,\bb^\bullet)$.  Next, by \cite[Proposition 5.3.4]{SH2006},  there exists a positive integer $l$ such that $\overline{\bb^{i+l}} \subseteq \bb^i$ for all $i \in \NN$. Thus, $\overline{\bb_{i+l+k}} \subseteq \overline{\bb^{i+l}} \subseteq \bb^i$ for all $i \in \mathbb{N}$ and $\bb^{\bullet} \le \overline{\bb_\bullet}.$ Again, by \Cref{cor.asym_res_int}, $\rhat(\aa_\bullet,\overline{\bb_\bullet}) = \rhat(\aa_\bullet,{\bb^\bullet})$.

 Now, applying \Cref{asym_res_ideal}, we get $$\rhat(\aa_\bullet, \bb_\bullet) =\rhat(\aa_\bullet, \bb^\bullet)= \rho(\aa_\bullet, \overline{\bb^\bullet})=\rhat(\aa_\bullet,\overline{\bb^\bullet}) = \rhat(\aa_\bullet, \overline{\bb_\bullet}).$$
Moreover, since $\overline{\bb^n} \subseteq  \overline{\bb_n}$ for $n \ge 1$, it follows from \Cref{lem.estimate} again that $\rho(\aa_\bullet,\overline{\bb_\bullet}) \le \rho(\aa_\bullet,\overline{\bb^\bullet})$. Particularly,
$$\rhat(\aa_\bullet,\overline{\bb_\bullet}) \le \rho(\aa_\bullet,\overline{\bb_\bullet}) \le \rho(\aa_\bullet,\overline{\bb^\bullet})=\rhat(\aa_\bullet,\overline{\bb_\bullet}).$$
This proves the second desired equality.
 \end{proof}

In general, we do not expect the equalities in \Cref{asym_res_int}, \Cref{asym_res_ideal}, and \Cref{thm.b1equivalent} to hold even  when $S$ is a Noetherian local domain of dimension 1 and $\bb_\bullet$ is the filtration of ordinary powers of a principal ideal, as exhibited by the following example.

\begin{example}\label{ex.rhatNOTbar}
Use the notations of \Cref{ex_Nagata}. Let $\aa_\bullet,\bb_\bullet$ be  filtration given by $\aa_n=\overline{\bb^n}$ and $\bb_n=\bb^n$. We claim that $\rhat(\aa_\bullet,\bb_\bullet)=\infty > \rhat(\aa_\bullet,\overline{\bb_\bullet})=1$.

Indeed, since $\bb_\bullet$ is a filtration and by part (4) in \Cref{ex_Nagata},  $\aa_{nt}=\overline{\bb^{nt}}\not\subseteq \bb^t$ for all $n, t\ge 1$,  we get $\rhat(\aa_\bullet,\bb_\bullet)=\infty$. Note that $\aa_\bullet=\overline{\bb_\bullet}$. We claim that $\aa_n\subseteq \aa_m$ if and only if $n\ge m$. The ``if'' part is clear. Assume that $n<m$, we show that
$$
f-\sum\limits_{i=1}^{n-1} t_ix^i \in \aa_n \setminus \aa_m.
$$
By part (4) in \Cref{ex_Nagata}, it remains to show that $f-\sum\limits_{i=1}^{n-1} t_ix^i \notin \overline{\bb^m}=\overline{(x^m)S}$. Assume the contrary. Since $S\subseteq B=k[[x]], \overline{(x^m)S} \subseteq \overline{(x^m)B}=(x^m)B$. Hence
$f-\sum\limits_{i=1}^{n-1} t_ix^i \in (x^m)B$, a contradiction. Hence $\aa_n\subseteq \aa_m$ if and only if $n\ge m$. It then follows that given $s,r\ge 1$, we have $\aa_{sn}\not\subseteq \aa_{rn}$ for all $n\gg 1$ if and only if $s<r$. Thus $\rhat(\aa_\bullet,\overline{\bb_\bullet})=\rhat(\aa_\bullet,\aa_\bullet)=1$.
\end{example}

As remarked in \Cref{ex.rhatNOTbar}, we do not expect the equality $\rhat(\aa_\bullet,\bb_\bullet) = \rhat(\aa_\bullet,\overline{\bb_\bullet})$ in general. Furthermore, even if $\rhat(\aa_\bullet,\bb_\bullet) = \rhat(\aa_\bullet,\overline{\bb_\bullet})$ holds, it is not necessarily the case that this value is equal to $\rho(\aa_\bullet,\overline{\bb_\bullet})$, as seen in the following example.

 \begin{example}
 \label{ex.rhoNOTequalBAR}
 Let $S=\kk[x,y]$, and let $\aa_n=(x,y)^n$ for $n \ge 1$. Clearly, $\aa_\bullet $ is a filtration of ideals in $S$ and $\ahat(\aa_\bullet) = 1 = \ahat(\overline{\aa_\bullet})$.

 Let $\cc=(x)$. Let $\bb_1=\cc+(y^2)$ and more generally for each $n \geq 1$, set $$\bb_{n} = \cc^{\left\lceil n/2\right\rceil}+\sum_{i=1}^{\left\lceil n/2\right\rceil} \cc^{\left\lceil n/2\right\rceil-i}y^{i+1}.$$
 Thanks to the inequality $\left\lceil m/2\right\rceil+\left\lceil n/2\right\rceil \ge \left\lceil (m+n)/2\right\rceil$, it can be seen that $\bb_\bullet $ is a filtration of ideals in $S$. Note that $\alpha(\bb_{2n-1})=\alpha(\bb_{2n}) = \alpha(\cc^n) =n$ for all $n \in \mathbb{N}.$  Since $\alpha(I) =\alpha(\overline{I})$ for any homogeneous ideal $I$, we conclude that $\ahat(\bb_\bullet) = \dfrac{1}{2} = \ahat(\overline{\bb_\bullet})$.

 For any integer $s \ge 1$,
 \begin{equation}
  \label{eq_containment_ab}
  \aa_{s+1} =(x,y)^{s+1}= \sum_{i=0}^{s+1} \cc^{s+1-i}y^i \subseteq \cc^s+\sum_{i=2}^{s+1} \cc^{s+1-i}y^{i}=\bb_{2s}.
 \end{equation}
Observe also that $\aa_{s+1} \not\subseteq \bb_{2s+1}$ as $y^{s+1} \in \aa_{s+1} \setminus \bb_{2s+1}.$ Moreover,  $\aa_1 =(x,y) \not\subseteq \bb_1 =\overline{\bb_1}$. Therefore, $1 \le \rho(\aa_\bullet, \overline{\bb_\bullet}) \le \rho(\aa_\bullet,\bb_{\bullet}) = 1.$ Specifically, we have
$$ \rho(\aa_\bullet, \overline{\bb_\bullet}) = \rho(\aa_\bullet,\bb_{\bullet}) = 1.$$

On the other hand, it follows from \Cref{cor.boundsW} and \Cref{lem.estimate} that
$$\dfrac{1}{2} \leq \rhat(\aa_\bullet,\overline{\bb_\bullet}) \le \rhat(\aa_\bullet,{\bb_\bullet}).$$
Furthermore, using \eqref{eq_containment_ab} and \Cref{asp_res_bound}, we get
\[
 \rhat(\aa_\bullet,{\bb_\bullet}) \le \lim\limits_{s\to \infty} \frac{s+1}{2s}=\dfrac{1}{2}.
\]
Thus, $\rhat(\aa_\bullet,\overline{\bb_\bullet})=\rhat(\aa_\bullet,\bb_\bullet)=\dfrac{1}{2}.$
Particularly, $\rhat(\aa_\bullet, \bb_\bullet) = \rhat(\aa_\bullet, \overline{\bb_\bullet}) \not= \rho(\aa_\bullet, \overline{\bb_\bullet}).$
 \end{example}

By putting \Cref{thm.b1equivalent} and results in \Cref{sec.viaIntegralClosure} together, we obtain the following corollary, which generalizes \cite[Corollary 4.14]{DFMS}.

\begin{corollary}
	\label{cor.IntRes}
Let $S$ be a domain as in \Cref{asym_res_ideal}.
	Let $\aa_\bullet$ be a filtration of nonzero ideals and let $\bb_\bullet$ be a graded family of nonzero ideals in $S$. Suppose that $\bb_\bullet$ is $\bb$-equivalent for some ideal $\bb \subseteq S$. Then,
	$$\rhat(\aa_\bullet, \bb_\bullet) = \rhat(\aa_\bullet, \overline{\bb_\bullet}) = \rho(\aa_\bullet, \overline{\bb_\bullet}) = \max_{v \in \RV(\bb)} \left\{ \dfrac{v(\bb)}{\vhat(\aa_\bullet)} \right\} =\sup_{v(\bb) > 0} \left\{ \dfrac{v(\bb)}{\vhat(\aa_\bullet)} \right\} ,$$
	where the last supremum is taken over all valuations of $K$ supported on $S$ that take positive values in $\bb$.
\end{corollary}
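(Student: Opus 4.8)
The plan is to deduce this from \Cref{thm.b1equivalent}, \Cref{asym_res_ideal} and \Cref{thm.ReesVal}, using the filtration $\bb^\bullet$ of ordinary powers of $\bb$ as the bridge between the ``$\bb$-equivalent'' hypothesis on $\bb_\bullet$ and the ``standard graded Veronese'' hypothesis needed to invoke Rees valuations. Note first that $\bb$-equivalence forces $\bb \neq (0)$ (since $(0) \neq \bb_{1+k} \subseteq \bb$), so $\RV(\bb)$ is defined.

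First I would apply \Cref{thm.b1equivalent} directly to obtain
$$\rhat(\aa_\bullet,\bb_\bullet) = \rhat(\aa_\bullet,\overline{\bb_\bullet}) = \rho(\aa_\bullet,\overline{\bb_\bullet}) = \rhat(\aa_\bullet,\bb^\bullet),$$
and then \Cref{asym_res_ideal} to get $\rhat(\aa_\bullet,\bb^\bullet) = \rhat(\aa_\bullet,\overline{\bb^\bullet})$. This reduces everything to computing $\rhat(\aa_\bullet,\overline{\bb^\bullet})$. Next I would invoke \Cref{thm.ReesVal} for the pair $(\aa_\bullet,\bb^\bullet)$: the Rees algebra $\R(\bb^\bullet)$ is the ordinary Rees algebra of $\bb$, hence a standard graded $S$-algebra, so the hypothesis of \Cref{thm.ReesVal} holds with $k=1$ and with $(\bb^\bullet)_1 = \bb$. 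This gives
$$\rhat(\aa_\bullet,\overline{\bb^\bullet}) = \max_{v \in \RV(\bb)} \left\{ \frac{\vhat(\bb^\bullet)}{\vhat(\aa_\bullet)} \right\} = \sup_{v(\bb) > 0} \left\{ \frac{\vhat(\bb^\bullet)}{\vhat(\aa_\bullet)} \right\}.$$
Finally I would identify $\vhat(\bb^\bullet) = v(\bb)$ — immediate from \Cref{supported} with $k = 1$, or from \Cref{lem.a1} since $\bb^\bullet$ is itself $\bb$-equivalent — and substitute, chaining all the equalities to obtain the claimed formula.

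The argument is essentially an assembly of results already proved, so I do not expect a genuine obstacle; the two points that require a little care are: (i) one cannot apply \Cref{thm.ReesVal} to $(\aa_\bullet,\bb_\bullet)$ directly, because a $\bb$-equivalent family $\bb_\bullet$ need not possess any standard graded Veronese subalgebra (this is precisely why the detour through $\bb^\bullet$ is necessary, and it also explains why the conclusion is stated in terms of $\RV(\bb)$ rather than $\RV(\bb_k)$ for some $k$); and (ii) the degenerate case $\vhat(\aa_\bullet) = 0$, in which every quantity in the chain equals $\infty$ by the ``moreover'' clauses of \Cref{low_bou} and \Cref{thm.ReesVal}, so that the displayed equalities are to be read with the convention $c/0 = \infty$.
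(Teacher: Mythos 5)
Your proposal is correct and follows essentially the same route as the paper: invoke \Cref{thm.ReesVal} for the pair $(\aa_\bullet,\bb^\bullet)$ with $k=1$ to compute $\rhat(\aa_\bullet,\overline{\bb^\bullet})$ via $\RV(\bb)$, then use \Cref{thm.b1equivalent} (together with $\rhat(\aa_\bullet,\bb^\bullet)=\rho(\aa_\bullet,\overline{\bb^\bullet})$ from \Cref{asym_res_ideal}) to identify this with the other quantities. The only cosmetic difference is that you make explicit the identification $\vhat(\bb^\bullet)=v(\bb)$ and spell out the degenerate cases, which the paper leaves implicit.
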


\begin{proof} From \Cref{thm.ReesVal} (noting that the Rees algebra of the ideal $\bb$ is \emph{always} standard graded), we get
\[
\rhat(\aa_\bullet, \overline{\bb^\bullet}) = \max_{v \in \RV(\bb)} \left\{ \dfrac{v(\bb)}{\vhat(\aa_\bullet)} \right\} =\sup_{v(\bb) > 0} \left\{ \dfrac{v(\bb)}{\vhat(\aa_\bullet)} \right\}.
\]
Using \Cref{thm.b1equivalent}, we get
\[
\rhat(\aa_\bullet,\bb_\bullet) = \rhat(\aa_\bullet,\overline{\bb_\bullet}) = \rho(\aa_\bullet,\overline{\bb_\bullet})= \rhat(\aa_\bullet,\bb^\bullet)=\rho(\aa_\bullet, \overline{\bb^\bullet}).
\]
This gives the desired conclusion.
\end{proof}


In the remaining of this section, we shall present another instance where the conclusion of \Cref{thm.b1equivalent} holds for families that are not necessarily $\bb$-equivalent. We shall need a couple of auxiliary results.

\begin{lemma}\label{res_as_prod}
	Let $\aa_\bullet$ and $\bb_\bullet$ be graded families of ideals in $S$. Suppose that $\R^{[k]}(\bb_\bullet)$ is a standard graded $S$-algebra. Then, with $\bb_k^\bullet =\{ \bb_k^i\}_{i\geq 1}$, we have
	$$\rho(\aa_\bullet,\bb_k^\bullet) \le k\rho(\aa_\bullet,\bb_\bullet) \text{ and } \rho(\aa_\bullet,\overline{\bb_k^\bullet}) \le k\rho(\aa_\bullet, \overline{\bb_\bullet}).$$
\end{lemma}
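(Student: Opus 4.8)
The plan is to unwind the definitions, using the single structural fact provided by the hypothesis. Since $\R^{[k]}(\bb_\bullet)$ is a standard graded $S$-algebra, by the same observation used repeatedly in this section (e.g. in the proof of \Cref{thm.ReesVal}) we have $\bb_{kn}=\bb_k^{\,n}$ for all $n\ge 1$; consequently also $\overline{\bb_k^{\,n}}=\overline{\bb_{kn}}$ for all $n\ge 1$. Note first that $\bb_k^\bullet$ and $\overline{\bb_k^\bullet}$ are genuine graded families, so the resurgence numbers appearing in the statement are well defined.

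For the first inequality, take any $s,r\in\NN$ with $\aa_s\not\subseteq\bb_k^{\,r}$. Rewriting $\bb_k^{\,r}=\bb_{kr}$, this says $\aa_s\not\subseteq\bb_{kr}$, so the ratio $s/(kr)$ belongs to the set
$$\left\{\tfrac{u}{w}\;\Big|\;u,w\in\NN,\ \aa_u\not\subseteq\bb_w\right\}$$
whose supremum is $\rho(\aa_\bullet,\bb_\bullet)$. Hence $\dfrac{s}{kr}\le\rho(\aa_\bullet,\bb_\bullet)$, i.e. $\dfrac{s}{r}\le k\,\rho(\aa_\bullet,\bb_\bullet)$. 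Taking the supremum over all such pairs $(s,r)$ gives $\rho(\aa_\bullet,\bb_k^\bullet)\le k\,\rho(\aa_\bullet,\bb_\bullet)$. (If there is no such pair, then $\rho(\aa_\bullet,\bb_k^\bullet)=\sup\varnothing=-\infty$ and the inequality is trivial.)

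For the second inequality the argument is identical: if $\aa_s\not\subseteq\overline{\bb_k^{\,r}}$, then using $\overline{\bb_k^{\,r}}=\overline{\bb_{kr}}$ we get $\aa_s\not\subseteq\overline{\bb_{kr}}$, so $\dfrac{s}{kr}\le\rho(\aa_\bullet,\overline{\bb_\bullet})$ and thus $\dfrac{s}{r}\le k\,\rho(\aa_\bullet,\overline{\bb_\bullet})$; taking the supremum yields $\rho(\aa_\bullet,\overline{\bb_k^\bullet})\le k\,\rho(\aa_\bullet,\overline{\bb_\bullet})$. There is no real obstacle here beyond correctly matching indices through the identity $\bb_{kn}=\bb_k^{\,n}$; the only point worth flagging is the degenerate empty-supremum case, handled as above.
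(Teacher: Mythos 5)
Your proof is correct and takes essentially the same approach as the paper: both hinge on the identity $\bb_{kn}=\bb_k^{\,n}$ coming from the standard-graded hypothesis. The only difference is cosmetic — you argue directly that non-containment $\aa_s\not\subseteq\bb_k^{\,r}$ forces $s/r\le k\,\rho(\aa_\bullet,\bb_\bullet)$, while the paper runs the contrapositive (assuming $s/r>k\,\rho(\aa_\bullet,\bb_\bullet)$ and deducing containment); these are logically equivalent and of equal weight.
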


\begin{proof}
We shall prove 	$\rho(\aa_\bullet,\bb_k^\bullet) \le k\rho(\aa_\bullet,\bb_\bullet)$; the other inequality follows by a similar line of arguments. Let $s,r \in \mathbb{N}$ be such that $\dfrac{s}{r} > k \rho(\aa_\bullet,\bb_\bullet).$ Then, $\dfrac{s}{kr} > \rho(\aa_\bullet,\bb_\bullet)$, which implies that  $\aa_s \subseteq \bb_{kr}$.  Since $\bb_{kr}= \bb_k^r$, we get $\aa_s \subseteq \bb_k^r$. Thus, $k\rho(\aa_\bullet,\bb_\bullet)$ is an upper bound of the set
	$$\left\{ \dfrac{s}{r} ~\Big|~ s,r \in \mathbb{N}, \text{ and } \aa_s \not\subseteq \bb_k^r \right\}.$$
	Hence, $\rho(\aa_\bullet,\bb_k^\bullet)\leq k\rho(\aa_\bullet,\bb_\bullet).$
\end{proof}

The inequality in \Cref{res_as_prod} can be strict in general, as depicted in the following example.

\begin{example} \label{ex.320strict}
	Let $S$ be a Noetherian polynomial ring over the field $\kk$, and let $\mm$ be its maximal homogeneous ideal. Let $I$ be a nonzero proper homogeneous ideal of $S$, and let $\nn$ be an ideal of $S$ such that $I^2\subseteq \nn \subseteq \mm I$.
	
	Consider graded families $\aa_\bullet$ and $\bb_\bullet$ of ideals in $S$, which are given by $\aa_i=I^i$ for $i \ge 1$, and
	\[
	\bb_i= \begin{cases}
		I^i &\text{if $i$ is even},\\
		\nn I^i, &\text{if $i$ is odd}.
	\end{cases}
	\]
	
	Note that $\R(\bb_\bullet)$ is Noetherian and generated in degree $1$ and $2$, and that $\bb_{2t}=\bb_2^t$ for all $t\ge 1$. We claim that  $\rho(\aa_\bullet,\bb_\bullet)=\rho(\aa_\bullet,\bb_2^\bullet)=2$. Particularly,
	\[
	\rho(\aa_\bullet,\bb_2^\bullet)=2 < 4= 2\rho(\aa_\bullet,\bb_\bullet).
	\]
	
	Indeed, for $s \in \NN$, set (with the convention that $\inf \varnothing = \infty$)
$$\beta_s(\aa_\bullet, \bb_\bullet) = \inf\{r ~\big|~ \aa_s \not\subseteq \bb_r\}.$$
Let $\NC(\aa_\bullet,\bb_\bullet)=\{(s, \beta_s(\aa_\bullet, \bb_\bullet)) ~\big|~ s \in \NN \text{ and } \beta_s(\aa_\bullet, \bb_\bullet) \not= \infty\}$. Clearly,
	\[
	\rho(\aa_\bullet,\bb_\bullet)=\sup \left\{\dfrac{s}{r} ~\Big|~ (s,r)\in \NC(\aa_\bullet,\bb_\bullet) \right\}.
	\]
	We shall first show that
	\[
	\NC(\aa_\bullet,\bb_2^\bullet)=\left\{ \left(s,\left\lceil \frac{s+1}{2} \right \rceil\right) ~\Big|~ s\ge 1\right\}.
	\]
	
	Take $(s,r)\in \NC(\aa_\bullet,\bb_2^\bullet)$. Then, $\aa_s=I^s\not\subseteq \bb_2^r=I^{2r}$, and so, $2r \ge s+1$, i.e., $r \ge \left\lceil \frac{s+1}{2} \right \rceil$. Since $\aa_s\subseteq \bb_2^j=I^{2j}$ for all $1\le j \le r-1$, we deduce that $r= \left\lceil \frac{s+1}{2} \right \rceil$, as desired.
	
	We shall next check that $(2,1) \in \NC(\aa_\bullet,\bb_\bullet)$. This holds since $\aa_2=I^2\not\subseteq \bb_1=\nn I$; otherwise, using $\mathfrak{n}\subseteq \mm I$, we get $I^2 \subseteq  \nn I \subseteq \mm I^2$, which is a contradiction. In fact, one can also easily show that
	\begin{gather*}
		\NC(\aa_\bullet,\bb_\bullet)= \{(s,s) ~\big|~ s\ge 1, s \, \text{is odd}\}  \,   \bigcup \, \{(s,s-1) ~\big|~ s\ge 2, s \, \text{is even}\}.
	\end{gather*}
	
	Now, we get
	\[
	\rho(\aa_\bullet,\bb_\bullet)=\max \left\{\sup \left\{\frac{s}{s} ~\big|~ s \ge 1, s \, \text{is odd}\right \}, \sup \left\{\frac{s}{s-1} ~\big|~ s \ge 1, s \, \text{is even}\right \} \right \}=2,
	\]
	and
	\[
	\rho(\aa_\bullet,\bb_2^\bullet)=\sup \left \{\frac{s}{\left\lceil \frac{s+1}{2} \right \rceil} ~\Big|~ s \ge 1 \right \}=2.
	\]
\end{example}

When the resurgence number $\rho(\bullet, \bullet)$ is replaced by the asymptotic resurgence number $\rhat(\bullet, \bullet)$, the inequality in \Cref{res_as_prod} becomes an equality.

\begin{lemma}
	\label{asym_res_as_prod}
	Let $\aa_\bullet$ and $\bb_\bullet$ be graded families of ideals in $S$. Suppose that $\R^{[k]}(\bb_\bullet)$ is a standard graded $S$-algebra. Then, with $\bb_k^\bullet =\{ \bb_k^i\}_{i\geq 1}$, we have
	$$\rhat(\aa_\bullet,\bb_k^\bullet)=k\rhat(\aa_\bullet,\bb_\bullet) \text{ and } \rhat(\aa_\bullet, \overline{\bb_k^\bullet})=k\rhat(\aa_\bullet, \overline{\bb_\bullet}).$$
\end{lemma}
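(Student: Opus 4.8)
The plan is to reduce both equalities to a single bookkeeping statement about the index set over which the supremum defining $\rhat$ is taken. The starting point is that, since $\R^{[k]}(\bb_\bullet)$ is standard graded, $\bb_k^{j}=\bb_{kj}$ for all $j\ge 1$, and taking integral closures also gives $\overline{\bb_k^{j}}=\overline{\bb_{kj}}$. Hence $\bb_k^\bullet$ is exactly the ``$k$-Veronese reindexing'' $\{\bb_{kj}\}_{j\ge 1}$ of $\bb_\bullet$, and $\overline{\bb_k^\bullet}=\{\overline{\bb_{kj}}\}_{j\ge 1}$ is the $k$-Veronese reindexing of the graded family $\overline{\bb_\bullet}$ (note $\overline{\bb_\bullet}$ is itself a graded family, since $\overline{\bb_p}\,\overline{\bb_q}\subseteq \overline{\bb_p\bb_q}\subseteq \overline{\bb_{p+q}}$). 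So it suffices to prove the following general fact and apply it twice, with $\cc_\bullet=\bb_\bullet$ and with $\cc_\bullet=\overline{\bb_\bullet}$: for any graded family $\cc_\bullet$, setting $\cc_\bullet^{(k)}:=\{\cc_{kj}\}_{j\ge 1}$ (again a graded family), one has $\rhat(\aa_\bullet,\cc_\bullet^{(k)})=k\,\rhat(\aa_\bullet,\cc_\bullet)$.

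To prove this, I would work at the level of ``witnessing pairs''. For a graded family $\cc_\bullet$, call $(s,r)\in\NN^2$ a \emph{witness} for $(\aa_\bullet,\cc_\bullet)$ if $\aa_{st}\not\subseteq\cc_{rt}$ for $t\gg 1$, so that $\rhat(\aa_\bullet,\cc_\bullet)=\sup\{\,s/r : (s,r)\text{ is a witness}\,\}$. First, unwinding definitions, $(s,r)$ is a witness for $(\aa_\bullet,\cc_\bullet^{(k)})$ precisely when $\aa_{st}\not\subseteq\cc_{krt}$ for $t\gg 1$, i.e. precisely when $(s,kr)$ is a witness for $(\aa_\bullet,\cc_\bullet)$; since $s/r=k\cdot\tfrac{s}{kr}$ this gives
$$\rhat(\aa_\bullet,\cc_\bullet^{(k)})=k\cdot\sup\Big\{\tfrac{s}{r'} : r'\in k\NN,\ (s,r')\text{ is a witness for }(\aa_\bullet,\cc_\bullet)\Big\}.$$
Second, the supremum on the right — taken over witnesses whose second coordinate is a multiple of $k$ — equals the supremum over all witnesses: if $(s,r')$ is a witness for $(\aa_\bullet,\cc_\bullet)$, then replacing $t$ by $kt$ in $\aa_{st}\not\subseteq\cc_{r't}$ shows $\aa_{(ks)t}\not\subseteq\cc_{(kr')t}$ for $t\gg1$, so $(ks,kr')$ is also a witness, with the same ratio $s/r'$ and with $kr'\in k\NN$. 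Combining the two facts yields $\rhat(\aa_\bullet,\cc_\bullet^{(k)})=k\,\rhat(\aa_\bullet,\cc_\bullet)$, with the usual conventions when the supremum is over the empty set or equals $+\infty$.

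I do not expect a serious obstacle: the entire content sits in the reindexing identity $\bb_k^{j}=\bb_{kj}$ (which is literally the standard-gradedness hypothesis) together with the invariance of the ``$t\gg1$'' quantifier under $t\mapsto kt$. The one point that genuinely needs care is that the supremum defining $\rhat$ ranges over \emph{pairs} of positive integers rather than over rationals, so one must verify — as the scaling $(s,r')\mapsto(ks,kr')$ does — that restricting to pairs whose second coordinate is divisible by $k$ does not decrease the supremum. It is worth remarking that this last step is exactly what fails for the ordinary resurgence $\rho$, which is why \Cref{res_as_prod} only produces an inequality there.
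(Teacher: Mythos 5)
Your proof is correct. For the first equality $\rhat(\aa_\bullet,\bb_k^\bullet)=k\rhat(\aa_\bullet,\bb_\bullet)$ you argue in essentially the same way as the paper, though you are more explicit about the crucial rescaling step $(s,r')\mapsto(ks,kr')$: the paper's direction $\rhat(\aa_\bullet,\bb_\bullet)\le\rhat(\aa_\bullet,\bb_k^\bullet)/k$, as written, only bounds witnesses whose second coordinate is a multiple of $k$, and your observation that every witness can be rescaled to one of this form without changing the ratio is exactly what is needed to bound them all. Where you take a genuinely different route is the second equality: the paper derives $\rhat(\aa_\bullet,\overline{\bb_k^\bullet})=k\rhat(\aa_\bullet,\overline{\bb_\bullet})$ by citing \Cref{thm.ReesVal} (its Rees-valuation formula for $\rhat(\aa_\bullet,\overline{\bb_\bullet})$ combined with $\vhat(\bb_\bullet)=v(\bb_k)/k$ from \Cref{supported}), whereas you note that since $\overline{\bb_k^{\,j}}=\overline{\bb_{kj}}$, the family $\overline{\bb_k^\bullet}$ is literally the $k$-th Veronese reindexing of the graded family $\overline{\bb_\bullet}$, so the same purely combinatorial reindexing argument applies verbatim. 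Your version is cleaner and more self-contained: it avoids the valuation-theoretic machinery of Section 2 entirely, and in particular does not lean on the domain and nonzero-ideal hypotheses that underlie \Cref{thm.ReesVal}. Your closing remark, that the rescaling exploits the ``$t\gg 1$'' quantifier built into $\rhat$ and thereby explains why \Cref{res_as_prod} yields only an inequality for the ordinary resurgence $\rho$, is an accurate way to locate the asymmetry between the two invariants.
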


\begin{proof} We shall prove $\rhat(\aa_\bullet,\bb_k^\bullet)=k\rhat(\aa_\bullet,\bb_\bullet)$; the other equality follows by \Cref{thm.ReesVal}. Let $s,r \in \mathbb{N}$ be such that $\dfrac{s}{r} > k \rhat(\aa_\bullet,\bb_\bullet).$ Then, $\dfrac{s}{kr} > \rhat(\aa_\bullet,\bb_\bullet)$, which implies that  $\aa_{st} \subseteq \bb_{krt}$ for infinitely many values $t$.  Since $\bb_{krt}= \bb_k^{rt}$, we get that $\aa_{st} \subseteq \bb_k^{rt}$ for infinitely many values $t$. Thus, $k\rhat(\aa_\bullet,\bb_\bullet)$ is an upper bound of the set
	$$\left\{ \dfrac{s}{r} ~\Big|~ s,r\in \mathbb{N}, \text{ and } \aa_{st} \not\subseteq \bb_k^{rt} \text{ for } t \gg 1\right\}.$$
	Hence, $\rhat(\aa_\bullet,\bb_k^\bullet)\leq k\rhat(\aa_\bullet,\bb_\bullet).$
	
	Now, consider $s,r \in \mathbb{N}$ be such that $\dfrac{s}{r} >  \rhat(\aa_\bullet,\bb_k^\bullet).$ Then,   $\aa_{st} \subseteq \bb_{k}^{rt}=\bb_{krt}$ for infinitely many values $t$, as $\bb_{krt}= \bb_k^{rt}$.  	This implies that
	$$\inf\left\{ \dfrac{s}{kr} ~\Big|~ s,r \in \mathbb{N}, \text{ and } \dfrac{s}{r} > \rhat(\aa_\bullet,\bb_k^\bullet) \right\}$$
	is an upper bound of the set
	$$\left\{ \dfrac{s}{r} ~\Big|~ s,r \in \mathbb{N}, \text{ and } \aa_{st} \not\subseteq \bb_{rt} \text{ for } t \gg 1\right\}.$$
	Note that
	$$\inf\left\{ \dfrac{s}{kr} ~\Big|~ s,r \in \mathbb{N}, \text{ and } \dfrac{s}{r} > \rhat(\aa_\bullet,\bb_k^\bullet) \right\}= \dfrac{1}{k}\inf\left\{ \dfrac{s}{r} ~\Big|~ s,r \in \mathbb{N}, \text{ and } \dfrac{s}{r} > \rhat(\aa_\bullet,\bb_k^\bullet) \right\}=\dfrac{\rhat(\aa_\bullet,\bb_k^\bullet)}{k}.$$
	Thus, $\rhat(\aa_\bullet,\bb_\bullet)\leq \dfrac{\rhat(\aa_\bullet,\bb_k^\bullet)}{k}.$ Hence, the assertion follows.
\end{proof}

We now arrive at the next main result of this section.

\begin{theorem}
	\label{thm.rhoInt}
Let $S$ be a domain as in \Cref{asym_res_ideal}. Let $\aa_\bullet$ be a filtration and let $\bb_\bullet$ be a graded family of nonzero ideals in $S$.  Suppose  that $\R^{[k]}(\bb_\bullet)$ is a standard graded $S$-algebra, and that $\rho(\aa_\bullet, \overline{\bb_k^\bullet}) = k\rho(\aa_\bullet, \overline{\bb_1^\bullet}).$ Then,
	$$\rhat(\aa_\bullet,\bb_\bullet) = \rhat(\aa_\bullet,\overline{\bb_\bullet}) = \rho(\aa_\bullet,\overline{\bb_\bullet}).$$
\end{theorem}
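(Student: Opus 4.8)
The plan is to reduce the statement to the filtrations of ordinary powers of the two single ideals $\bb_1$ and $\bb_k$ --- for which all the required identities are already in hand from \Cref{asym_res_ideal} --- and then transport the conclusions back to $\bb_\bullet$ via the Veronese-comparison results \Cref{res_as_prod} and \Cref{asym_res_as_prod}. Throughout I would use that $\R^{[k]}(\bb_\bullet)$ being a standard graded $S$-algebra means $\bb_{kn}=\bb_k^n$ for all $n$, and that, since $\aa_\bullet$ is a filtration and $\bb_1,\bb_k$ are nonzero ideals, \Cref{asym_res_ideal} applies both with the ideal $\bb_1$ and with the ideal $\bb_k$.

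First I would prove $\rhat(\aa_\bullet,\bb_\bullet)=\rhat(\aa_\bullet,\overline{\bb_\bullet})$. By \Cref{asym_res_as_prod} (using $\bb_{kn}=\bb_k^n$) we have $\rhat(\aa_\bullet,\bb_\bullet)=\frac1k\rhat(\aa_\bullet,\bb_k^\bullet)$ and $\rhat(\aa_\bullet,\overline{\bb_\bullet})=\frac1k\rhat(\aa_\bullet,\overline{\bb_k^\bullet})$. Since $\bb_k^\bullet$ is the filtration of ordinary powers of the nonzero ideal $\bb_k$, \Cref{asym_res_ideal} gives $\rhat(\aa_\bullet,\bb_k^\bullet)=\rhat(\aa_\bullet,\overline{\bb_k^\bullet})$; chaining the three identities yields the first equality.

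Next I would prove $\rho(\aa_\bullet,\overline{\bb_\bullet})=\rhat(\aa_\bullet,\overline{\bb_\bullet})$. The inequality $\rhat(\aa_\bullet,\overline{\bb_\bullet})\le\rho(\aa_\bullet,\overline{\bb_\bullet})$ is immediate from the definitions, so it suffices to prove the reverse. Since $\bb_\bullet$ is a graded family we have $\bb_1^i\subseteq\bb_i$, hence $\overline{\bb_1^i}\subseteq\overline{\bb_i}$ for all $i$ by monotonicity of integral closure, i.e.\ $\overline{\bb_1^\bullet}\le\overline{\bb_\bullet}$; then \Cref{lem.estimate} gives $\rho(\aa_\bullet,\overline{\bb_\bullet})\le\rho(\aa_\bullet,\overline{\bb_1^\bullet})$. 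Now I would invoke the hypothesis $\rho(\aa_\bullet,\overline{\bb_k^\bullet})=k\,\rho(\aa_\bullet,\overline{\bb_1^\bullet})$, then \Cref{asym_res_ideal} applied to $\bb_k$ to replace $\rho(\aa_\bullet,\overline{\bb_k^\bullet})$ by $\rhat(\aa_\bullet,\overline{\bb_k^\bullet})$, and finally \Cref{asym_res_as_prod} to rewrite $\frac1k\rhat(\aa_\bullet,\overline{\bb_k^\bullet})=\rhat(\aa_\bullet,\overline{\bb_\bullet})$. Stringing these together,
\[
\rho(\aa_\bullet,\overline{\bb_\bullet})\le\rho(\aa_\bullet,\overline{\bb_1^\bullet})=\frac1k\,\rho(\aa_\bullet,\overline{\bb_k^\bullet})=\frac1k\,\rhat(\aa_\bullet,\overline{\bb_k^\bullet})=\rhat(\aa_\bullet,\overline{\bb_\bullet}),
\]
which forces equality throughout and completes the argument.

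I do not anticipate a genuine obstacle, as all the substance lives in the cited results; the only care needed is bookkeeping: verifying applicability of \Cref{asym_res_ideal} to $\bb_1$ and $\bb_k$, treating the extended-real arithmetic consistently should some of these invariants be $\pm\infty$, and using monotonicity of integral closure to get $\overline{\bb_1^\bullet}\le\overline{\bb_\bullet}$. It is worth remarking that the inequality $\rho(\aa_\bullet,\overline{\bb_k^\bullet})\le k\,\rho(\aa_\bullet,\overline{\bb_1^\bullet})$ already holds automatically by combining \Cref{res_as_prod} with the monotonicity step above, so the hypothesis is really only asserting the reverse inequality; moreover, via \Cref{asym_res_ideal}, the hypothesis is equivalent to the purely asymptotic statement $\rhat(\aa_\bullet,\overline{\bb_k^\bullet})=k\,\rhat(\aa_\bullet,\overline{\bb_1^\bullet})$.
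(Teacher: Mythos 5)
Your proof is correct and follows essentially the same route as the paper: both hinge on the Veronese-scaling identities from \Cref{asym_res_as_prod}, the monotonicity from \Cref{lem.estimate} together with $\overline{\bb_1^\bullet}\le\overline{\bb_\bullet}$, the hypothesis $\rho(\aa_\bullet,\overline{\bb_k^\bullet})=k\rho(\aa_\bullet,\overline{\bb_1^\bullet})$, and the single-ideal equalities $\rho(\aa_\bullet,\overline{\bb_k^\bullet})=\rhat(\aa_\bullet,\overline{\bb_k^\bullet})=\rhat(\aa_\bullet,\bb_k^\bullet)$ from \Cref{asym_res_ideal}. You reorganize the chain a bit differently — proving $\rhat(\aa_\bullet,\bb_\bullet)=\rhat(\aa_\bullet,\overline{\bb_\bullet})$ first and then sandwiching $\rho(\aa_\bullet,\overline{\bb_\bullet})$, thereby avoiding any explicit appeal to \Cref{res_as_prod} in the main argument, whereas the paper establishes $\rho(\aa_\bullet,\overline{\bb_\bullet})=\tfrac1k\rho(\aa_\bullet,\overline{\bb_k^\bullet})$ first via \Cref{res_as_prod} and \Cref{lem.estimate} — but this is a cosmetic difference; the content is the same, and your two closing remarks are correct.
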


\begin{proof}
	By \Cref{lem.estimate} and \Cref{res_as_prod}, we have
	$$\rho(\aa_\bullet, \overline{\bb_k^\bullet}) \le k\rho(\aa_\bullet, \overline{\bb_\bullet}) \le k\rho(\aa_\bullet, \overline{\bb_1^\bullet}) = \rho(\aa_\bullet, \overline{\bb_k^\bullet}).$$
	Thus,
	$$\rho(\aa_\bullet,\overline{\bb_\bullet}) = \dfrac{ \rho(\aa_\bullet,\overline{\bb_k^\bullet}) }{k}.$$
	On the other hand, by \Cref{asym_res_as_prod}, we have
	$$\rhat(\aa_\bullet,{\bb_\bullet}) = \dfrac{ \rhat(\aa_\bullet,{\bb_k^\bullet})}{k} \text{ and } \rhat(\aa_\bullet,\overline{\bb_\bullet}) = \dfrac{ \rhat(\aa_\bullet,\overline{\bb_k^\bullet})}{k}.$$
	Thus, it is enough to prove that $\rho(\aa_\bullet,\overline{\bb_k^\bullet})  =\rhat(\aa_\bullet,\overline{\bb_k^\bullet})=\rhat(\aa_\bullet,{\bb_k^\bullet}) .$ This is indeed true, by  \Cref{asym_res_ideal}. The theorem is established.
\end{proof}

We end this section with the following broad question.

\begin{question} Classify pairs $(\aa_\bullet, \bb_\bullet)$ of graded families of ideals for which
$$\rhat(\aa_\bullet, \bb_\bullet) = \rhat(\aa_\bullet,\overline{\bb_\bullet}) = \rho(\aa_\bullet, \overline{\bb_\bullet}).$$
\end{question}


\section{Resurgence numbers as limits}
\label{sec.compute}

In this section, we construct sequences whose limits realize the asymptotic resurgence numbers $\rhat(\aa_\bullet, \bb_\bullet)$ and $\rhat(\aa_\bullet,\overline{\bb_\bullet})$, avoiding the use of Rees valuations as in Section \ref{sec.viaIntegralClosure}. Our construction is inspired by a recent work of DiPasquale, the fourth author, and Seceleanu \cite{DNS23}.  We also discuss another version of the resurgence number, denoted by $\rlim(\aa_\bullet, \bb_\bullet)$, and show that in many practical situations,
$$\rhat(\aa_\bullet, \bb_\bullet) = \rlim(\aa_\bullet, \bb_\bullet).$$
Our main results in this section are Theorems \ref{thm.limlambdaIJ}, \ref{thm.limbetarho}, and \ref{thm.rhatrlimrho}.

\begin{definition}
\label{def:subsuper}
A sequence of real  numbers $\{\alpha_n\}_{n\geq n_0}$ for some $n_0\in\NN$ is called
\begin{itemize}
\item \emph{sub-additive} if $\alpha_{i+j} \leq \alpha_i+\alpha_j$ for all $i,j\geq n_0$; and
\item \emph{super-additive} if $\alpha_{i+j} \geq \alpha_i+\alpha_j$ for all $i,j\geq n_0$.
\end{itemize}
\end{definition}

 Fekete's Lemma guarantees the existence of the limit
 $\widehat{\alpha}:=\lim_{n\to\infty} \frac{\alpha_n}{n}$
 for any sub-additive  or super-additive sequence of real numbers $\{\alpha_n\}_{n\ge n_0}$, where the limit may be $\pm \infty$. For a sub-additive sequence $\{\alpha_n\}_{n \ge n_0}$, we have
 $$\widehat{\alpha} =  \lim_{n \to \infty} \dfrac{\alpha_n}{n} = \inf_{n\ge  n_0} \frac{\alpha_n}{n}.$$
 For a super-additive sequence $\{\alpha_n\}_{n \ge n_0}$, we have
 $$\widehat{\alpha} =  \lim_{n \to \infty} \dfrac{\alpha_n}{n} = \sup_{n\ge n_0} \frac{\alpha_n}{n}.$$

    \begin{definition}[{\cite[Definition 2.1]{DNS23}}]
    \label{def.dualseq}
        Given two sequences $\{\alpha_n\}_{n \ge n_0}$ and $\{\beta_n\}_{n \ge m_0}$ of real numbers, we define two new sequences $\{\overleftarrow{\alpha}^\beta_n\}_{n \ge m_0}$ and $\{\overrightarrow{\alpha}^\beta_n\}_{n \ge m_0}$, that are given by
        \begin{eqnarray*}
        \overleftarrow{\alpha}^\beta_n=\inf\{d \ge n_0 \mid \alpha_d\geq \beta_n\},\\
        \overrightarrow{\alpha}^\beta_n=\sup\{d \ge n_0 \mid \alpha_d\leq \beta_n\}.
        \end{eqnarray*}
    \end{definition}

It is known that the sequences $\{\alpha_n\}_{n \ge n_0}$, $\{\beta_n\}_{n \ge m_0}$, $\{\overleftarrow{\alpha}^\beta_n\}_{n \ge m_0}$, and $\{\overrightarrow{\alpha}^\beta_n\}_{n \ge m_0}$ have the following relationship. We modify the statement to fit our purpose but the proof is the same as that of \cite[Theorem 2.5]{DNS23}.

    \begin{theorem}[{\cite[Theorem 2.5]{DNS23}}]
    \label{thm.dualseq}
        Suppose that $\{\alpha_n\}_{n \ge n_0}$ and $\{\beta_n\}_{n \ge m_0}$ are sub-additive and super-additive sequences of positive real numbers, respectively.
        \begin{enumerate}
            \item Assume that, for $n\gg 0$, $\overrightarrow{\alpha}^\beta_n \in \NN$. Then, the sequence $\{\overrightarrow{\alpha}^\beta_n\}$ is super-additive and $\widehat{\overrightarrow{\alpha}^\beta}=\frac{\widehat{\beta}}{\widehat{\alpha}}$.
            \item Assume that, for $n\gg 0$, $\overleftarrow{\beta}^\alpha_n \in \NN$. Then, the sequence $\{\overleftarrow{\beta}^\alpha_n\}$ is sub-additive and $\widehat{\overleftarrow{\beta}^\alpha}=\frac{\widehat{\alpha}}{\widehat{\beta}}$.
        \end{enumerate}
    \end{theorem}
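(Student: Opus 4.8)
The two parts are mirror images of each other: swapping $\alpha \leftrightarrow \beta$, $\inf \leftrightarrow \sup$, and $\le \leftrightarrow \ge$ turns the statement and proof of (1) into those of (2), so the plan is to carry out (1) in full and then transcribe. Write $\gamma_n := \overrightarrow{\alpha}^\beta_n$ and fix $N$ with $\gamma_n \in \NN$ for all $n \ge N$. Everything hinges on one elementary remark: since $\gamma_n$ is the supremum of the set $T_n := \{ d \ge n_0 \mid \alpha_d \le \beta_n \}$ and this supremum is a natural number, $T_n$ is nonempty with maximum $\gamma_n$; hence for every $n \ge N$ one has $\alpha_{\gamma_n} \le \beta_n$, while $\alpha_{\gamma_n+1} > \beta_n$.

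\emph{Super-additivity of $\{\gamma_n\}$.} For $m, n \ge N$, sub-additivity of $\{\alpha_n\}$ and super-additivity of $\{\beta_n\}$ give $\alpha_{\gamma_m+\gamma_n} \le \alpha_{\gamma_m} + \alpha_{\gamma_n} \le \beta_m + \beta_n \le \beta_{m+n}$, so $\gamma_m + \gamma_n \in T_{m+n}$ and therefore $\gamma_{m+n} = \sup T_{m+n} \ge \gamma_m + \gamma_n$. Thus $\{\gamma_n\}_{n \ge N}$ is super-additive, and Fekete's Lemma gives that $\widehat{\gamma} = \lim_{n\to\infty}\gamma_n/n = \sup_{n\ge N}\gamma_n/n$ exists in $\RR\cup\{+\infty\}$.

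\emph{Identifying the limit.} The idea is to sandwich $\gamma_n/n$ between two expressions both tending to $\widehat{\beta}/\widehat{\alpha}$. For the upper bound: from $\alpha_{\gamma_n}\le\beta_n$ and the universal inequality $\alpha_d\ge\widehat{\alpha}\,d$ (valid for all $d$ because $\widehat{\alpha}=\inf_d \alpha_d/d$) one gets $\widehat{\alpha}\,\gamma_n\le\beta_n$, hence $\gamma_n/n\le \beta_n/(n\widehat{\alpha})\to\widehat{\beta}/\widehat{\alpha}$. For the lower bound, first note $\gamma_n\to\infty$: indeed $\beta_n\to\infty$ since $\beta_n/n\to\widehat{\beta}\ge\beta_{m_0}/m_0>0$, and $\alpha_{2^k n_0}\le 2^k\alpha_{n_0}$ shows that every sufficiently large integer eventually lies in $T_n$. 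Now fix $\epsilon>0$; since $\alpha_d/d\to\widehat{\alpha}$ we have $\alpha_d\le(\widehat{\alpha}+\epsilon)d$ for $d\gg 1$, so for $n\gg 1$, $\beta_n < \alpha_{\gamma_n+1}\le(\widehat{\alpha}+\epsilon)(\gamma_n+1)$, giving $\gamma_n/n > \beta_n/\bigl(n(\widehat{\alpha}+\epsilon)\bigr)-1/n$; letting $n\to\infty$ and then $\epsilon\to 0^+$ yields $\widehat{\gamma}\ge\widehat{\beta}/\widehat{\alpha}$. Combining the two bounds gives $\widehat{\overrightarrow{\alpha}^\beta}=\widehat{\beta}/\widehat{\alpha}$.

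\emph{Where the difficulty lies.} The argument is conceptually easy; the only care needed is bookkeeping. One must keep $\gamma_n+1\ge n_0$ and control that it is large (handled by $\gamma_n\to\infty$), and one should pin down the conventions in the degenerate cases: if $\widehat{\alpha}=0$ or $\widehat{\beta}=\infty$, the lower-bound estimate already forces $\widehat{\gamma}=+\infty$, matching the convention $c/0=\infty$ for $c>0$, so no separate treatment is required and the upper-bound step is then vacuous. For (2) the dual remark is $\beta_{\delta_n}\ge\alpha_n$ and $\beta_{\delta_n-1}<\alpha_n$ with $\delta_n:=\overleftarrow{\beta}^\alpha_n$; sub-additivity follows from $\beta_{\delta_m+\delta_n}\ge\beta_{\delta_m}+\beta_{\delta_n}\ge\alpha_m+\alpha_n\ge\alpha_{m+n}$, and the limit computation is verbatim the one above with $\widehat{\alpha}$ and $\widehat{\beta}$ interchanged, using $\beta_d\le\widehat{\beta}\,d$ and $\beta_d\ge(\widehat{\beta}-\epsilon)d$ for $d\gg1$.
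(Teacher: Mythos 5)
The paper itself offers no proof of this statement: it simply says the proof is the same as that of \cite[Theorem 2.5]{DNS23} and moves on, so there is no in-paper argument to compare against. Your proof is therefore a genuine addition, and the strategy is the natural one: from $\overrightarrow{\alpha}^\beta_n=\gamma_n\in\NN$ extract the two key facts $\alpha_{\gamma_n}\le\beta_n<\alpha_{\gamma_n+1}$, combine sub-additivity of $\alpha$, super-additivity of $\beta$, and the first fact to get super-additivity of $\gamma$, invoke Fekete, and then sandwich $\gamma_n/n$ between $\beta_n/(n\widehat{\alpha})$ and $\beta_n/\bigl(n(\widehat{\alpha}+\epsilon)\bigr)-1/n$. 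All of those steps are correct, including the handling of $\gamma_n\to\infty$ and the remark that $\widehat{\alpha}=0$ forces $\widehat{\gamma}=\infty$.

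The one real issue is the "verbatim by duality" claim for part (2). In part (1) the estimate $\beta_n<\alpha_{\gamma_n+1}$ is usable precisely because $\gamma_n\to\infty$, which you correctly derive from $\beta_n\to\infty$. But the dual of that fact is $\delta_n\to\infty$, and this can fail: if $\widehat{\alpha}=0$ the sequence $\alpha_n$ need not tend to infinity (e.g.\ $\alpha_n\equiv 1$), in which case $\delta_n$ is eventually constant and $\beta_{\delta_n-1}$ may not even be defined when $\delta_n=m_0$. Your parenthetical about degenerate cases covers only part (1), where $\widehat{\alpha}=0$ pushes the lower bound to $+\infty$; in part (2) the corresponding degenerate case $\widehat{\alpha}=0$ makes the lower bound $\widehat{\delta}\ge 0$ trivial, and the nontrivial content is the upper bound $\widehat{\delta}\le 0$, which your argument as written does not deliver. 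The fix is easy and actually cleaner than the $\delta_n-1$ route: for any $\epsilon>0$ and $d\gg 1$ one has $\beta_d\ge(\widehat{\beta}-\epsilon)d$, so $\delta_n\le\max\bigl(d_0,\lceil\alpha_n/(\widehat{\beta}-\epsilon)\rceil\bigr)$, hence $\widehat{\delta}\le\widehat{\alpha}/(\widehat{\beta}-\epsilon)$ for every $\epsilon$, and letting $\epsilon\to 0^+$ gives $\widehat{\delta}\le\widehat{\alpha}/\widehat{\beta}$ with no case split. With that one patch the proof is complete.
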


\begin{definition} \label{def.lambdabeta}
	Let $S$ be a domain and let $v$ be a valuation of $K = \text{QF}(S)$. For $n \in \NN$, set
$$\lambda_n(\aa_\bullet,\bb_\bullet):=\sup \{ d \mid \aa_d \not \subseteq \bb_n\} \text{ and } \lambda_n^v(\aa_\bullet,\bb_\bullet):= \sup \{d \mid v(\aa_d)<v(\bb_n) \}.$$
\end{definition}

Note that $\lambda_n^v(\aa_\bullet,\bb_\bullet)$ and $\lambda_n(\aa_\bullet,\bb_\bullet)$ in general can take infinite values. Before stating our first main result of this section, we have the following useful proposition.

\begin{proposition}
    \label{prop.lambdavalIJ}
    Let $S$ be a domain, and let $\aa_\bullet$ and $\bb_\bullet$ be graded families of nonzero ideals in $S$.  Suppose that $\R^{[k]}(\bb_\bullet)$ is a standard graded $S$-algebra, for some $k \in \NN$. For any valuation $v$ of $K$ with $v(\bb_k)>0$ and $\widehat{v}(\aa_\bullet) >0$, we have
    \begin{enumerate}
        \item $\{\lambda_{kn}^v(\aa_\bullet,\bb_\bullet)\}$ is a super-additive sequence.
        \item $\displaystyle \widehat{\lambda_{kn}^v(\aa_\bullet,\bb_\bullet)}=\lim_{n\rightarrow \infty} \frac{\lambda_{kn}^v(\aa_\bullet,\bb_\bullet)}{n}= \frac{v(\bb_k)}{\widehat{v}(\aa_\bullet)}$.
    \end{enumerate}
\end{proposition}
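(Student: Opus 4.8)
The plan is to deduce both assertions from \Cref{thm.dualseq}(1), applied to a carefully chosen pair of auxiliary sequences. The key preliminary observation is that, since $\R^{[k]}(\bb_\bullet)$ is a standard graded $S$-algebra, $\bb_{kn} = \bb_k^n$ for every $n \ge 1$, hence $v(\bb_{kn}) = n\,v(\bb_k)$; and since $v$ is $\ZZ$-valued with $v(\bb_k) > 0$, in fact $v(\bb_k) \ge 1$. I would then set $\alpha_d := v(\aa_d)$ for $d \ge 1$ and $\beta_n := n\,v(\bb_k) - 1$ for $n \ge 2$. For $\{\alpha_d\}$: picking $x \in \aa_p$ and $y \in \aa_q$ with $v(x) = v(\aa_p)$, $v(y) = v(\aa_q)$, the element $xy$ is a nonzero member of $\aa_p\aa_q \subseteq \aa_{p+q}$ (nonzero since $S$ is a domain), so $v(\aa_{p+q}) \le v(x) + v(y) = v(\aa_p) + v(\aa_q)$; thus $\{\alpha_d\}$ is sub-additive, its Fekete limit is $\widehat{\alpha} = \vhat(\aa_\bullet)$ by the definition of the skew Waldschmidt constant, and $\alpha_d \ge d\,\vhat(\aa_\bullet) > 0$ for all $d$ because $\vhat(\aa_\bullet) = \inf_d v(\aa_d)/d > 0$. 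For $\{\beta_n\}_{n \ge 2}$: one has $\beta_{m+n} = \beta_m + \beta_n + 1 \ge \beta_m + \beta_n$, so it is super-additive; its terms are positive since $v(\bb_k) \ge 1$; and $\widehat{\beta} = v(\bb_k)$.

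The heart of the argument is the identification $\overrightarrow{\alpha}^\beta_n = \lambda_{kn}^v(\aa_\bullet,\bb_\bullet)$. Since $\alpha_d, \beta_n \in \ZZ$, the inequality $\alpha_d \le \beta_n$ is equivalent to $v(\aa_d) < n\,v(\bb_k) = v(\bb_{kn})$, and therefore
$$\overrightarrow{\alpha}^\beta_n = \sup\{\,d \ge 1 \mid v(\aa_d) < v(\bb_{kn})\,\} = \lambda_{kn}^v(\aa_\bullet,\bb_\bullet).$$
Before invoking \Cref{thm.dualseq}(1) I would verify its standing hypothesis that $\overrightarrow{\alpha}^\beta_n \in \NN$ for $n \gg 0$: from $v(\aa_d) \ge d\,\vhat(\aa_\bullet)$ we get $\lambda_{kn}^v \le n\,v(\bb_k)/\vhat(\aa_\bullet) < \infty$ for all $n$, while for $n$ large enough that $v(\aa_1) < n\,v(\bb_k)$ the set defining $\lambda_{kn}^v$ contains $d = 1$; hence $\lambda_{kn}^v$ is a positive integer for $n \gg 0$. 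Then \Cref{thm.dualseq}(1) yields simultaneously that $\{\lambda_{kn}^v(\aa_\bullet,\bb_\bullet)\}$ is super-additive (for $n \gg 0$), which is part (1), and that $\widehat{\lambda_{kn}^v(\aa_\bullet,\bb_\bullet)} = \widehat{\beta}/\widehat{\alpha} = v(\bb_k)/\vhat(\aa_\bullet)$; since a super-additive sequence has $\widehat{(\cdot)} = \lim_{n\to\infty} (\cdot)/n$ by Fekete's Lemma, this is part (2).

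The only delicate point I foresee is matching the strict inequality appearing in \Cref{def.lambdabeta} against the non-strict inequality in \Cref{def.dualseq}; this is precisely why I take $\beta_n = n\,v(\bb_k) - 1$ rather than the obvious $n\,v(\bb_k)$, and I would have to check — as above — that this $-1$ shift preserves super-additivity and positivity (both hold because $v(\bb_k) \ge 1$) and leaves the Fekete limit $\widehat{\beta}$ unchanged. Apart from this bookkeeping, the proof is a routine verification of the hypotheses of \Cref{thm.dualseq}, so I do not anticipate a genuine obstacle.
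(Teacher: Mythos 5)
Your proof is correct and follows essentially the same route as the paper's: the same auxiliary sequences $\alpha_d = v(\aa_d)$ and $\delta_n = n\,v(\bb_k) - 1$, the same identification $\lambda_{kn}^v = \overrightarrow{\alpha}^\delta_n$ (with the $-1$ shift converting the strict inequality of \Cref{def.lambdabeta} into the non-strict one of \Cref{def.dualseq}), the same check that $\overrightarrow{\alpha}^\delta_n \in \NN$ for $n \gg 0$, and the same appeal to \Cref{thm.dualseq}(1). Your write-up is slightly more explicit about sub-additivity of $v(\aa_\bullet)$ and about the quantitative bound $\lambda_{kn}^v \le n\,v(\bb_k)/\vhat(\aa_\bullet)$, but these are minor elaborations of the paper's argument, not a different approach.
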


\begin{proof}
    We first give an alternate definition for $\lambda^v_{kn}(\aa_\bullet,\bb_\bullet)$ by means of Definition \ref{def.dualseq}. Set $\alpha_n=v(\aa_n)$ and $\delta_n=nv(\bb_k)-1$ for all $n$. Note that $\{\alpha_n\}_{n \in \NN}$ is sub-additive with $\widehat{\alpha}=\widehat{v}(\aa_\bullet)$ and $\{\delta_{n}\}_{n \in \NN}$ is super-additive with $\widehat{\delta}={v(\bb_k)}$, by \Cref{supported}. By definition, one see that $\lambda^v_{kn}(\aa_\bullet,\bb_\bullet)=\overrightarrow{\alpha}^\delta_{n}$ for every $n \in \NN$.

    Also, as $\widehat{v}(\aa_\bullet) >0$, we have $v(a_1)>0$. Since $v(\bb_k)>0$, there exists $n_0 \in \NN$ such that $nv(b_k) \ge n_0 v(\bb_k) > v(\aa_1)$ for all $n \ge n_0$. Therefore, for $n \ge n_0$, the set $\{d \mid v(\aa_d)<v(\bb_{kn}) \}$ is a nonempty set.  Also,  for each $n \ge n_0$, the set $\{d \mid v(\aa_d)<v(\bb_{kn}) \}$ is finite, as if $v(\aa_d)< v(\bb_{kn})$ for infinitely many $d$, then $\vhat(\aa_\bullet)=0$. Hence, $\overrightarrow{\alpha}_n^{\delta} \in \NN$ for $n\ge n_0$, and the result follows from Theorem \ref{thm.dualseq}.
\end{proof}

It would be desirable to know when the sequence ${\displaystyle \left\{\lambda_n^v(\aa_\bullet,\bb_\bullet)/n \right\}_{n \in \NN}}$ has a limit.

\begin{question}
For which graded families $\aa_\bullet, \bb_\bullet$, does  $\displaystyle \lim_{n\rightarrow \infty} \frac{\lambda_{n}^v(\aa_\bullet,\bb_\bullet)}{n}$ exist?
\end{question}

The following example shows that, in general, the sequence $\left\{\lambda_n^v(\aa_\bullet, \bb_\bullet)/n \right\}_{n \in \NN}$ may have distinct subsequences converging to different limits, even when $\aa_\bullet = \bb_\bullet$ are graded filtration.

\begin{example} \label{ex.lambdaLim}
	Let $S=\kk[x]$, $\mm=(x)$, and let $v$ be the $\mm$-adic valuation. Consider the sequence  $\aa_\bullet =\{\aa_n\}_{n \in \NN}$ of ideals in $S$, given by
	\[
	\aa_n=\mm^{\lceil \log_2(n+1) \rceil}.
	\]
It can be seen that $\aa_\bullet$ is a filtration of ideals. This is because $n \mapsto \lceil \log_2(n+1) \rceil$ is a sub-additive function. We shall show that $$\lambda_n^v(\aa_\bullet, \aa_\bullet) = 2^{\lceil \log_2(n+1)\rceil -1}-1.$$
Indeed, by definition, we have
	\[
	\lambda^v_n(\aa_\bullet,\aa_\bullet)=\sup\{d \mid v(\aa_d)<v(\aa_n)\} = \sup\{d \mid \lceil \log_2(d+1) \rceil < \lceil \log_2(n+1) \rceil\}.
	\]
Set $t= \lceil \log_2(n+1) \rceil$. Then $d =\lambda^v_n(\aa_\bullet,\aa_\bullet)$ is the largest integer satisfying
	  $\lceil \log_2(d+1) \rceil < t;$
that is, $\log_2(d+1) \le t-1$ or, equivalently, $d \le 2^{t-1}-1$.  Therefore, $\lambda_n^v(\aa_\bullet, \aa_\bullet) = 2^{t-1}-1$, as claimed.

For $n=2^s-1$, where $s\in \ZZ_{> 0}$, we have $\lceil \log_2(n+1) \rceil=s$. In this case,
	\[
	\frac{\lambda^v_n(\aa_\bullet,\aa_\bullet)}{n} = \frac{2^{s-1}-1}{2^s-1} \xrightarrow{s\to \infty} \frac{1}{2}.
	\]
On the other hand, for $n=2^s$, where $s\in \ZZ_{>0}$, we have $\lceil \log_2(n+1) \rceil=s+1$. In this case,
	\[
	\frac{\lambda^v_n(\aa_\bullet,\aa_\bullet)}{n} = \frac{2^{s}-1}{2^s} \xrightarrow{s \rightarrow \infty} 1.
	\]
Hence, $\left\{\lambda_n^v(\aa_\bullet, \aa_\bullet)/n \right\}_{n \in \NN}$ has two subsequences with limits $\frac{1}{2}$ and 1, respectively. Therefore, $\lim\limits_{n\rightarrow \infty} \lambda_n^v(\aa_\bullet, \aa_\bullet)/n$ does not exist.
\end{example}
We proceed to our first main result of this section, which shows that in certain situation, $\widehat{\rho}(\aa_\bullet,\overline{\bb_\bullet})$ can be computed as limits of the (subsequence of) $\lambda_n/n$ sequence.
\begin{theorem}
    \label{thm.limlambdaIJ}
      Let $S$ be a domain, and let $\aa_\bullet$ and $\bb_\bullet$ be graded families of nonzero ideals in $S$. Assume that  $\R^{[k]}(\bb_\bullet)$ is a standard graded $S$-algebra, for some $k \in \NN$.  For $n \ge 1$, set $\lambda_n=\lambda_n(\aa_\bullet,\bb_\bullet)$, $\overline{\lambda_n}=\lambda_n(\aa_\bullet,\overline{\bb_\bullet})$, and for a valuation $v$ of $K$, set  $\lambda_n^v=\lambda_n^v(\aa_\bullet,\bb_\bullet)$. If $\rhat(\aa_\bullet, \overline{\bb_\bullet}) < \infty$, then there exists a valuation $v_0$ (which can be chosen as a Rees valuation of $\bb_k$) such that
    $$\widehat{\rho}(\aa_\bullet,\overline{\bb_\bullet}) =\lim_{n\rightarrow \infty}  \frac{\lambda_{kn}^{v_0}}{kn}.$$
Furthermore, if $\R(\overline{\bb_k^\bullet})$ is a finitely generated $\R(\bb_k^\bullet)$-module, then
    $$\widehat{\rho}(\aa_\bullet,\overline{\bb_\bullet}) = \lim_{n\rightarrow \infty} \frac{\lambda_{kn}}{kn}  = \lim_{n\rightarrow \infty} \frac{\overline{\lambda_{kn}}}{kn}  = \lim_{n\rightarrow \infty}  \frac{\lambda_{kn}^{v_0}}{kn}.$$
\end{theorem}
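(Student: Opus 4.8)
The plan is to derive everything from \Cref{thm.ReesVal} and \Cref{prop.lambdavalIJ}, using the valuative description of the integral closure as the only extra ingredient. First I would record the consequences of the standing hypotheses. Since $\R^{[k]}(\bb_\bullet)$ is a standard graded $S$-algebra and $\rhat(\aa_\bullet,\overline{\bb_\bullet})<\infty$, \Cref{thm.ReesVal} gives
$$\rhat(\aa_\bullet,\overline{\bb_\bullet})=\max_{v\in\RV(\bb_k)}\frac{\vhat(\bb_\bullet)}{\vhat(\aa_\bullet)},$$
and in particular $\vhat(\aa_\bullet)>0$ for every $v\in\RV(\bb_k)$; moreover $v(\bb_k)>0$ for each such $v$. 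Hence \Cref{prop.lambdavalIJ} applies to every Rees valuation $w$ of $\bb_k$, and together with \Cref{supported} (which gives $\widehat{w}(\bb_\bullet)=w(\bb_k)/k$) it yields
$$\lim_{n\to\infty}\frac{\lambda_{kn}^{w}}{kn}=\frac1k\cdot\frac{w(\bb_k)}{\widehat{w}(\aa_\bullet)}=\frac{\widehat{w}(\bb_\bullet)}{\widehat{w}(\aa_\bullet)}\qquad\text{for all }w\in\RV(\bb_k).$$
Choosing $v_0\in\RV(\bb_k)$ to be a valuation realizing the maximum above then proves the first assertion.

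For the ``furthermore'' part I would begin with $\overline{\lambda_{kn}}$. Because $\R^{[k]}(\bb_\bullet)$ is standard graded we have $\bb_{kn}=\bb_k^{n}$, so $\overline{\bb_{kn}}=\overline{\bb_k^{n}}$ and $\RV(\bb_{kn})=\RV(\bb_k)$. By \cite[Theorem 6.8.3]{SH2006}, $\aa_d\not\subseteq\overline{\bb_{kn}}$ if and only if $w(\aa_d)<w(\bb_{kn})$ for some $w\in\RV(\bb_k)$; taking the supremum over $d$ and using that the supremum of a finite union is the maximum of the suprema gives the key identity
$$\overline{\lambda_{kn}}=\max_{w\in\RV(\bb_k)}\lambda_{kn}^{w}.$$
Since the maximum of finitely many convergent sequences converges to the maximum of the limits (all finite here), this and the previous display yield $\lim_{n\to\infty}\overline{\lambda_{kn}}/(kn)=\max_{w\in\RV(\bb_k)}\widehat{w}(\bb_\bullet)/\widehat{w}(\aa_\bullet)=\rhat(\aa_\bullet,\overline{\bb_\bullet})$; in particular each $\overline{\lambda_{kn}}$ is finite.

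It remains to control $\lambda_{kn}$ by squeezing it between translates of $\overline{\lambda_{kn}}$. From $\bb_{kn}\subseteq\overline{\bb_{kn}}$ we get $\overline{\lambda_{kn}}\le\lambda_{kn}$. For the opposite inequality I would use the hypothesis that $\R(\overline{\bb_k^\bullet})$ is a finitely generated $\R(\bb_k^\bullet)$-module: taking homogeneous module generators of degrees at most $c$, the standard computation gives $\overline{\bb_k^{m}}\subseteq\bb_k^{m-c}$ for all $m\ge c$, that is, $\overline{\bb_{k(n+c)}}\subseteq\bb_{kn}$ for all $n\ge1$. Then $\aa_d\not\subseteq\bb_{kn}$ forces $\aa_d\not\subseteq\overline{\bb_{k(n+c)}}$, so $\lambda_{kn}\le\overline{\lambda_{k(n+c)}}$, and therefore
$$\frac{\overline{\lambda_{kn}}}{kn}\le\frac{\lambda_{kn}}{kn}\le\frac{\overline{\lambda_{k(n+c)}}}{k(n+c)}\cdot\frac{n+c}{n}.$$
Letting $n\to\infty$, both outer terms tend to $\rhat(\aa_\bullet,\overline{\bb_\bullet})$, so $\lim_{n\to\infty}\lambda_{kn}/(kn)=\rhat(\aa_\bullet,\overline{\bb_\bullet})$ as well (and each $\lambda_{kn}$ is finite). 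Combined with the first paragraph, the three displayed equalities follow.

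The step that needs the most care is the identity $\overline{\lambda_{kn}}=\max_{w\in\RV(\bb_k)}\lambda_{kn}^{w}$: one must apply the valuative criterion for non-containment in $\overline{\bb_{kn}}$ uniformly in $n$ and verify that the relevant suprema are genuinely attained (equivalently, finite), so that the ``maximum of limits'' reasoning is legitimate. The finiteness of the $\lambda_{kn}^{w}$ and of $\overline{\lambda_{kn}}$ is exactly where $\rhat(\aa_\bullet,\overline{\bb_\bullet})<\infty$ enters, while the module-finiteness hypothesis is what forces $\lambda_{kn}$ itself to be finite and closes the squeeze.
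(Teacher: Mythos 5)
Your proof is correct. Both you and the paper open by combining \Cref{thm.ReesVal} with \Cref{prop.lambdavalIJ} (and \Cref{supported} or \Cref{asym_res_as_prod} for the $k$-rescaling) to produce a Rees valuation $v_0$ with $\lim_n \lambda_{kn}^{v_0}/(kn) = \rhat(\aa_\bullet,\overline{\bb_\bullet})$, so the first half matches the paper essentially step for step.

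The ``furthermore'' part is where you genuinely diverge, and it is worth recording. The paper only uses the one-sided containments $\lambda_{kn}^{v_0}\le\overline{\lambda_{kn}}\le\lambda_{kn}$, and then closes the gap by invoking \Cref{tech_lemm}(1) together with the finite-generation hypothesis to get the upper bound $\lambda_{kn}/(n+m)<\rhat(\aa_\bullet,\overline{\bb_k^\bullet})$, after which a $\liminf$/$\limsup$ sandwich finishes. You instead establish an \emph{exact} identity: because $\bb_{kn}=\bb_k^n$, $\RV(\bb_{kn})=\RV(\bb_k)$, and by the valuative criterion \cite[Theorem 6.8.3]{SH2006} the set $\{d:\aa_d\not\subseteq\overline{\bb_{kn}}\}$ is the finite union over $w\in\RV(\bb_k)$ of $\{d:w(\aa_d)<w(\bb_{kn})\}$, whence $\overline{\lambda_{kn}}=\max_{w\in\RV(\bb_k)}\lambda_{kn}^w$; this computes $\lim_n\overline{\lambda_{kn}}/(kn)$ on the nose since a finite max of convergent sequences converges to the max of the limits, with all quantities finite thanks to $\rhat<\infty$ forcing $\widehat{w}(\aa_\bullet)>0$. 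You then control $\lambda_{kn}$ by the squeeze $\overline{\lambda_{kn}}\le\lambda_{kn}\le\overline{\lambda_{k(n+c)}}$ coming from $\overline{\bb_{k(n+c)}}\subseteq\bb_{kn}$, rather than via \Cref{tech_lemm}. Your route dispenses with \Cref{tech_lemm} entirely and is slightly more transparent about which valuation realizes each $\overline{\lambda_{kn}}$; the paper's route is shorter because it outsources the valuative work to an already-proved lemma. Both ultimately rest on the same valuative characterization of integral closure, so the difference is one of packaging, but it is a clean alternative packaging.
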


\begin{proof}
By Lemma \ref{asym_res_as_prod}, with $\bb_k^\bullet =\{ \bb_k^i\}_{i\geq 1}$, we have
	$ \rhat(\aa_\bullet, \overline{\bb_k^\bullet})=k\rhat(\aa_\bullet, \overline{\bb_\bullet}).$ Since $\widehat{\rho}(\aa_\bullet,\overline{\bb_\bullet})$ is finite, we have $\rhat(\aa_\bullet, \overline{\bb_k^\bullet})<\infty$, and therefore,  by Theorem \ref{thm.ReesVal} and Proposition \ref{prop.lambdavalIJ}, there exists a valuation $v_0 \in \RV(\bb_k)$  such that $\widehat{v_0}(\aa_\bullet) > 0$ and
    $$k\widehat{\rho}(\aa_\bullet,\overline{\bb_\bullet}) =\rhat(\aa_\bullet, \overline{\bb_k^\bullet}) =  \lim_{n\rightarrow \infty}  \frac{\lambda_{kn}^{v_0}}{n} .$$
  For the latter claim, note that  for any valuation $v$ of $K$ with $v(\bb_k)>0$ and for every $n \ge 1$,
    \[
    \{d \mid  v(\aa_d)<v(\bb_n) \} \subseteq \{d \mid \aa_d \not \subseteq \overline{\bb_n} \} \subseteq \{d \mid \aa_d \not \subseteq \bb_n \}.
    \]
  Therefore, $\lambda_n^v\le \overline{\lambda_n} \le \lambda_n$ for every $n$. We shall show that the with the valuation $v_0$ above, we have
    $$\lim_{n\rightarrow \infty} \frac{\lambda_{kn}}{n}  = \lim_{n\rightarrow \infty} \frac{\overline{\lambda_{kn}}}{n}  = \lim_{n\rightarrow \infty}  \frac{\lambda_{kn}^{v_0}}{n} = {\rhat(\aa_\bullet, \overline{\bb_k^\bullet})}.$$

Indeed, since $\R(\overline{\bb_k^\bullet})$ is a finitely generated $\R(\bb_k^\bullet)$-module, there exists a positive integer $m$ such that  $ \overline{\bb_k^n} \subseteq \bb_k^{n-m}$ for all $n \ge m$.
  Therefore, $\aa_{\lambda_{kn}} \not \subseteq \bb_{kn}=\bb_k^n$ implies that $\aa_{\lambda_{kn}} \not \subseteq \overline{\bb_k^{n+m}}$ for every $n \in \NN$. By Lemma \ref{tech_lemm} (1), we have $\dfrac{\lambda_{kn}}{n+m}<\rhat(\aa_\bullet, \overline{\bb_k^\bullet})$.

   Thus,
    \[
    \rhat(\aa_\bullet, \overline{\bb_k^\bullet}) = \liminf_{n\rightarrow \infty} \frac{\lambda_{kn}^{v_0}}{n+m} \le  \liminf_{n\rightarrow \infty} \frac{\overline{\lambda_{kn}}}{n+m}  \le \liminf_{n\rightarrow \infty} \frac{\lambda_{kn}}{n+m}
    \]
    and
    \[
    \rhat(\aa_\bullet, \overline{\bb_k^\bullet}) = \limsup_{n\rightarrow \infty} \frac{\lambda_{kn}^{v_0}}{n+m} \le  \limsup_{n\rightarrow \infty} \frac{\overline{\lambda_{kn}}}{n+m}  \le \limsup_{n\rightarrow \infty} \frac{\lambda_{kn}}{n+m}.
    \]
    The fact that $\dfrac{\lambda_{kn}}{n+m}<\rhat(\aa_\bullet, \overline{\bb_k^\bullet})$ for every $n \in \NN$ implies that $\displaystyle \limsup_{n\rightarrow \infty} \frac{\lambda_{kn}}{n+m} \le \rhat(\aa_\bullet, \overline{\bb_k^\bullet})$, which shows that all limit supremum and limit infimum above exist and equal. Hence,
    $$\lim_{n\rightarrow \infty} \frac{\lambda_{kn}}{n}  = \lim_{n\rightarrow \infty} \frac{\overline{\lambda_{kn}}}{n}  = \lim_{n\rightarrow \infty}  \frac{\lambda_{kn}^{v_0}}{n} = \rhat(\aa_\bullet, \overline{\bb_k^\bullet})$$
    as desired.
\end{proof}

Dual to $\{\lambda_n(\aa_\bullet,\bb_\bullet)\}_{n \in \NN}$ is the sequence $\{\beta_n(\aa_\bullet, \bb_\bullet)\}_{n \in \NN}$, which was already used in \Cref{ex.320strict}, namely,
\[
    \beta_n(\aa_\bullet,\bb_\bullet):=\inf \{ d \mid \aa_n \not \subseteq \bb_d\}.
\]
Also, for a valuation $v$ of $K$, set
$$\beta_n^v(\aa_\bullet,\bb_\bullet):= \inf \{d \mid v(\aa_n)<v(\bb_d) \}.$$

In the case where $\bb_\bullet = \bb^\bullet$, the following result is a dual version of \Cref{prop.lambdavalIJ}.

\begin{proposition}
    \label{prop.betavalIJ}
    Let $S$ be a domain and let $\aa_\bullet$ be a graded family of nonzero ideals in $S$. Let $\bb \subseteq S$ be an ideal and, as usual, set $\bb^\bullet = \{\bb^i\}_{i \ge 1}$. For any valuation $v$ of $K$ with $v(\bb)>0$, we have
    \begin{enumerate}
        \item $\beta_{n}^v(\aa_\bullet,\bb^\bullet)$ is a sub-additive sequence; and
        \item $\displaystyle \widehat{\beta_{n}^v(\aa_\bullet,\bb^\bullet)}=\lim_{n\rightarrow \infty} \frac{\beta_{n}^v(\aa_\bullet,\bb^\bullet)}{n}=\inf_{n \in \NN} \left\{ \frac{\beta_{n}^v(\aa_\bullet,\bb^\bullet)}{n}\right\} = \frac{\widehat{v}(\aa_\bullet)}{\widehat{v}(\bb^\bullet)}=\frac{\widehat{v}(\aa_\bullet)}{v(\bb)}$.
    \end{enumerate}
\end{proposition}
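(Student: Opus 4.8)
The plan is to obtain this as the ``dual'' of \Cref{prop.lambdavalIJ} via the dual-sequence machinery of \Cref{thm.dualseq}, specialised to $\bb_\bullet=\bb^\bullet$ --- whose Rees algebra is automatically a standard graded $S$-algebra, so the relevant Veronese index is $k=1$. First I would record the two elementary inputs. Since $\aa_p\aa_q\subseteq\aa_{p+q}$, one has $v(\aa_{p+q})\le v(\aa_p\aa_q)=v(\aa_p)+v(\aa_q)$, so $\alpha_n:=v(\aa_n)$ defines a sub-additive sequence, and $\lim_n\alpha_n/n=\inf_n\alpha_n/n=\vhat(\aa_\bullet)$ by Fekete's lemma. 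On the other side $v(\bb^n)=nv(\bb)$ exactly, so $\vhat(\bb^\bullet)=v(\bb)$, and the shifted sequence $\delta_n:=v(\bb^n)-1=nv(\bb)-1$ is super-additive with Fekete limit $v(\bb)$; since $v(\bb)>0$ we have $\delta_n>0$ for $n$ large and $\delta_n\to\infty$.

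The crux is the identification $\beta_n^v(\aa_\bullet,\bb^\bullet)=\overleftarrow{\delta}^\alpha_n$ in the notation of \Cref{def.dualseq}. Indeed, all quantities involved are integers, so $v(\aa_n)<v(\bb^d)$ is equivalent to $\alpha_n\le v(\bb^d)-1=\delta_d$; hence $\inf\{d\mid v(\aa_n)<v(\bb^d)\}=\inf\{d\mid\delta_d\ge\alpha_n\}$ --- this is precisely why $\delta_n$ is shifted down by $1$. Since $\delta_d\to\infty$, this infimum is attained and lies in $\NN$ for every $n$, so \Cref{thm.dualseq}(2), applied with ``$\alpha$'' there being $\{\alpha_n\}$ and ``$\beta$'' there being $\{\delta_n\}$, shows at once that $\{\beta_n^v(\aa_\bullet,\bb^\bullet)\}$ is sub-additive --- this is assertion (1) --- and that its Fekete limit equals $\widehat{\alpha}/\widehat{\delta}=\vhat(\aa_\bullet)/v(\bb)$. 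Because $\{\beta_n^v\}$ is sub-additive, Fekete's lemma also gives $\lim_n\beta_n^v/n=\inf_n\beta_n^v/n$, and recalling $\vhat(\bb^\bullet)=v(\bb)$ produces the full chain of equalities in (2).

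I do not anticipate a real obstacle: once the sequences are matched up, (2) is a direct quotation of \Cref{thm.dualseq}. The only delicate point --- the ``main obstacle'', such as it is --- is bookkeeping the positivity and integrality hypotheses of \Cref{thm.dualseq} (that $\delta_n$ and $\alpha_n$ be eventually positive and that $\overleftarrow{\delta}^\alpha_n$ be finite), which is handled exactly as in the proof of \Cref{prop.lambdavalIJ}: $v(\bb)>0$ makes $\overleftarrow{\delta}^\alpha_n$ finite, and one simply discards finitely many initial terms. As an alternative that avoids this fuss, one can argue directly from the two-sided estimate $(\beta_n^v-1)v(\bb)\le v(\aa_n)<\beta_n^v\,v(\bb)$ (immediate from the definition of $\beta_n^v$ and $v(\bb^d)=dv(\bb)$): dividing by $nv(\bb)$ and letting $n\to\infty$ forces $\lim_n\beta_n^v/n=\vhat(\aa_\bullet)/v(\bb)$, while sub-additivity follows since $\beta_i^v\,v(\bb)+\beta_j^v\,v(\bb)>v(\aa_i)+v(\aa_j)\ge v(\aa_{i+j})$ exhibits $\beta_i^v+\beta_j^v$ as a member of the set whose infimum defines $\beta_{i+j}^v$.
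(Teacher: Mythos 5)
Your proof follows the paper's approach exactly: set $\alpha_n=v(\aa_n)$, $\delta_n=nv(\bb)-1$, identify $\beta_n^v(\aa_\bullet,\bb^\bullet)=\overleftarrow{\delta}^\alpha_n$, and invoke \Cref{thm.dualseq}(2). You are in fact slightly more explicit than the paper about why the shift by $1$ in $\delta_n$ converts the strict inequality $v(\aa_n)<v(\bb^d)$ into the non-strict one needed for \Cref{def.dualseq}, and the two-sided estimate you append is a correct alternative.
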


\begin{proof}
    Set $\alpha_n=v(\aa_{n})$ and $\delta_n=nv(\bb)-1$ for all $n \in \NN.$ Note that $\{\alpha_n\}_{n \in \NN}$ is sub-additive with $\widehat{\alpha}=\widehat{v}(\aa_\bullet)$ and $\{\delta_{n}\}_{n \in \NN}$ is super-additive with $\widehat{\delta}=v(\bb)=\widehat{v}(\bb^\bullet)$.
    By definition, it can be seen that $\beta^v_{n}(\aa_\bullet,\bb_\bullet)=\overleftarrow{\delta}^\alpha_{n}$ for every $n \in \NN$.

    Since $v(\bb)>0$, the set $\{d \mid v(\aa_n)<v(\bb^{d}) \}$ is non-empty for all $n \in \NN$. Thus, $\overleftarrow{\delta}_n^{\alpha} \in \NN$ for $n \in \NN$, and the result follows from Theorem \ref{thm.dualseq}.
\end{proof}

\begin{question}
For which graded families $\aa_\bullet, \bb_\bullet$, does $\displaystyle \lim_{n\rightarrow \infty} \frac{\beta_{n}^v(\aa_\bullet,\bb_\bullet)}{n}$ exist?
\end{question}

Example \ref{ex.lambdaLim} also gives an instance where the sequence $\left\{\beta^v_n(\aa_\bullet,\bb_\bullet)/n \right\}_{n \in \NN}$ may have distinct subsequences converging to different limits.

\begin{example}
Let $S$ and $\aa_\bullet = \{\mm^{\lceil \log_2(n+1)\rceil}\}_{n \in \NN}$ be as in \Cref{ex.lambdaLim}.
As we have seen in \Cref{ex.lambdaLim}, $\aa_\bullet$ is a graded filtration of ideals.
Moreover, by a similar argument, it can also be shown that
$$\beta^v_n(\aa_\bullet,\aa_\bullet)=2^{\lceil \log_2(n+1) \rceil}.$$

Consider $n=2^s-1$, where $s\in \ZZ_{>0}$. Then,  $\lceil \log_2(n+1) \rceil=s$, and so
	\[
	\frac{\beta^v_n(\aa_\bullet,\aa_\bullet)}{n} = \frac{2^s}{2^s-1} \xrightarrow{s\to \infty} 1.
	\]
On the other hand, consider $n=2^s$, where $s\in \mathbb{Z}_{>0}$. Then,  $\lceil \log_2(n+1) \rceil=s+1$, and so
	\[
	\frac{\beta^v_n(\aa_\bullet,\aa_\bullet)}{n} = \frac{2^{s+1}}{2^s} = 2.
	\]
Hence, $\left\{\beta^v_n(\aa_\bullet,\bb_\bullet)/n \right\}_{n \in \NN}$ has two subsequences converging to 1 and 2, respectively. Therefore, $\lim\limits_{n\rightarrow \infty} \beta_n^v(\aa_\bullet, \aa_\bullet)/n$ does not exist.
\end{example}

We are ready to prove our next main result, where $\rhat(\aa_\bullet, \overline{\bb_\bullet})$ can be realized as the reciprocal of the limit of the $\beta_n(\aa_\bullet, \bb_\bullet)/n$ sequence. Using this $\beta_n$ sequence, we can also slightly improve \Cref{thm.limlambdaIJ} by not having to require that $\rhat(\aa_\bullet, \overline{\bb_\bullet}) < \infty$.

\begin{theorem} \label{thm.limbetarho}
Let $S$ be a domain, let $\aa_\bullet$ be a graded family of ideals, and let $\bb_\bullet$ be a filtration of ideals in $S$. For $n \ge 1$, set $\beta_n=\beta_n(\aa_\bullet,\bb_\bullet)$, $\overline{\beta_n}=\beta_n(\aa_\bullet,\overline{\bb_\bullet})$, and for any valuation $v$ of $K$, set $\beta_n^v=\beta_n^v(\aa_\bullet,\bb_\bullet)$. Suppose that $\R^{[k]}(\bb_\bullet)$ is a standard graded $S$-algebra and $\R(\overline{\bb_k^{\bullet}})$ is a finitely generated $\R(\bb_k^{\bullet})$-module, for some $k \in \NN$. Then, there exists a valuation $v_0$ (which can be chosen as a Rees valuation of $\bb_k$) such that
    $$\frac{1}{\widehat{\rho}(\aa_\bullet,\overline{\bb_\bullet})} = \lim_{n\rightarrow \infty} \frac{\beta_{n}}{n}  = \lim_{n\rightarrow \infty} \frac{\overline{\beta_{n}}}{n}  = \lim_{n\rightarrow \infty}  \frac{\beta_{n}^{v_0}}{n}.$$
\end{theorem}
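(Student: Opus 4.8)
The plan is to reduce the whole statement to the ordinary powers of the single ideal $\bb:=\bb_k$, for which the dual-sequence machinery of \Cref{prop.betavalIJ} is available and the hypothesis of \Cref{tech_lemm}(1) is automatic, and then to transfer back using that $\bb_\bullet$ is a \emph{filtration} with $\bb_{kn}=\bb^n$ for all $n$ (since $\R^{[k]}(\bb_\bullet)$ is standard graded). Write $\rhat:=\rhat(\aa_\bullet,\overline{\bb_\bullet})$; one may assume $\bb$ is a nonzero proper ideal, as otherwise every term in the statement is $\pm\infty$. From the finite-generation hypothesis, fix $m\in\NN$ with $\overline{\bb^i}\subseteq\bb^{i-m}$ for all $i\ge m$, i.e. $\overline{\bb_{ki}}\subseteq\bb_{k(i-m)}$. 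I then choose $v_0\in\RV(\bb)$ as follows. Since $\widehat v(\bb_\bullet)=v(\bb)/k>0$ for every $v\in\RV(\bb)$ by \Cref{supported}, \Cref{thm.ReesVal} gives $\rhat=\max_{v\in\RV(\bb)}\widehat v(\bb_\bullet)/\widehat v(\aa_\bullet)$; if $\rhat<\infty$ I take $v_0$ attaining this maximum (so $\widehat{v_0}(\aa_\bullet)>0$), and if $\rhat=\infty$ I take $v_0$ with $\widehat{v_0}(\aa_\bullet)=0$ (such $v_0$ must exist, the numerators $\widehat v(\bb_\bullet)$ all being positive). In both cases $v_0(\bb)>0$, so $v_0(\bb_d)$ is nondecreasing in $d$ with $v_0(\bb_{kq})=q\,v_0(\bb)\to\infty$; hence $\beta_n^{v_0}:=\beta_n^{v_0}(\aa_\bullet,\bb_\bullet)$, and therefore also $\overline{\beta_n}$ and $\beta_n$, are finite for every $n$ (by the comparisons below).

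Next I would record three elementary comparisons, all using that $\bb_\bullet$ is a filtration and $\bb_{kq}=\bb^q$; set $\mu_n:=\beta_n^{v_0}(\aa_\bullet,\bb^\bullet)$. First, $k\mu_n-k\le\beta_n^{v_0}\le k\mu_n$: the upper bound since $v_0(\bb_{k\mu_n})=\mu_n v_0(\bb)>v_0(\aa_n)$, and the lower bound since for $d=\beta_n^{v_0}$ and $q=\lfloor d/k\rfloor$ one has $v_0(\aa_n)<v_0(\bb_d)\le v_0(\bb_{k(q+1)})=(q+1)v_0(\bb)$, forcing $\mu_n\le q+1\le d/k+1$. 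Second, $\beta_n\le\overline{\beta_n}\le\beta_n^{v_0}+k$: the left inequality is immediate from $\bb_d\subseteq\overline{\bb_d}$, and for the right one, with $d=\beta_n^{v_0}$ and $d'=k\lceil d/k\rceil$ we have $v_0(\bb_{d'})\ge v_0(\bb_d)>v_0(\aa_n)$ while $v_0\in\RV(\bb^{\lceil d/k\rceil})=\RV(\bb_{d'})$ (Rees valuations being invariant under powers), so $\aa_n\not\subseteq\overline{\bb_{d'}}$ by \cite[Theorem 6.8.3]{SH2006}, giving $\overline{\beta_n}\le d'\le d+k$. Third, if $\aa_n\not\subseteq\bb_{\beta_n}$, then for $j:=\lceil\beta_n/k\rceil+m$ we get $\overline{\bb^j}=\overline{\bb_{kj}}\subseteq\bb_{k(j-m)}=\bb_{k\lceil\beta_n/k\rceil}\subseteq\bb_{\beta_n}$, hence $\aa_n\not\subseteq\overline{\bb^j}$.

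Then I would conclude as follows. By \Cref{prop.betavalIJ} applied to the ideal $\bb$ and the valuation $v_0$, $\lim_n\mu_n/n=\widehat{v_0}(\aa_\bullet)/v_0(\bb)$, so by the first comparison $\lim_n\beta_n^{v_0}/n=k\,\widehat{v_0}(\aa_\bullet)/v_0(\bb)=\widehat{v_0}(\aa_\bullet)/\widehat{v_0}(\bb_\bullet)$, which equals $1/\rhat$ if $\rhat<\infty$ and equals $0$ if $\rhat=\infty$. The second comparison gives $\limsup_n\overline{\beta_n}/n\le\lim_n\beta_n^{v_0}/n$. For the reverse bound when $\rhat<\infty$, the third comparison yields $\aa_n\not\subseteq\overline{\bb^j}$ with $j=\lceil\beta_n/k\rceil+m$, so \Cref{tech_lemm}(1) applied to the family $\bb^\bullet$ — whose hypothesis holds since $\widehat v(\bb^\bullet)=v(\bb)$ for every valuation $v$ — gives $n/j<\rhat(\aa_\bullet,\overline{\bb^\bullet})=k\,\rhat$ (the last equality by \Cref{asym_res_as_prod}), whence $\beta_n/n>1/\rhat-k(m+1)/n$ and thus $\liminf_n\beta_n/n\ge1/\rhat$. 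Combining with $\beta_n\le\overline{\beta_n}\le\beta_n^{v_0}+k$ shows all of $\beta_n/n$, $\overline{\beta_n}/n$, $\beta_n^{v_0}/n$ converge to $1/\rhat$. When $\rhat=\infty$, the chain $0\le\beta_n/n\le\overline{\beta_n}/n\le\beta_n^{v_0}/n+k/n\to0$ already forces all three limits to be $0$, matching $1/\infty=0$.

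The step I expect to be the main obstacle is the Veronese reduction in the second paragraph, together with the observation that it is genuinely necessary: for a general filtration $\bb_\bullet$ the sequence $\{v(\bb_n)\}$ is only sub-additive and $\bb_\bullet$ need not satisfy $\widehat v(\bb_\bullet)=v(\bb_1)$, so neither \Cref{prop.betavalIJ} nor \Cref{tech_lemm}(1) can be applied to $\bb_\bullet$ directly — everything must be routed through $\bb^\bullet=\bb_k^\bullet$, and the price is the bookkeeping of the bounded additive discrepancies in the three comparisons, which fortunately disappear after dividing by $n$. A secondary point is the identification $\RV(\bb_{kq})=\RV(\bb)$, needed so that $v_0$ stays a Rees valuation of the ideals that show up.
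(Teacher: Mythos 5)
Your proof is correct and follows essentially the same route as the paper's: reduce to the ordinary-powers family $\bb_k^\bullet$ via the Veronese hypothesis, get the limit of $\beta_n^{v_0}(\aa_\bullet,\bb_k^\bullet)/n$ from \Cref{prop.betavalIJ} together with the choice of a Rees valuation $v_0$ that realizes the maximum in \Cref{thm.ReesVal}, obtain the lower bound on $\liminf\beta_n/n$ via the finite-generation constant $m$ and \Cref{tech_lemm}(1), and then sandwich all three sequences using bounded discrepancies that vanish after dividing by $n$. The paper organizes the argument as a clean two-phase proof — first establishing the limits for $\beta_n(\aa_\bullet,\bb_k^\bullet),\ \overline{\beta_n}(\aa_\bullet,\bb_k^\bullet),\ \beta_n^{v_0}(\aa_\bullet,\bb_k^\bullet)$ and only then transferring to $\bb_\bullet$ via the three inequalities \eqref{ineq_1stchain}--\eqref{ineq_3rdchain}, finishing with \Cref{asym_res_as_prod} — whereas you interleave the two passes; the content is the same. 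Two minor cosmetic points: your second comparison $\overline{\beta_n}\le\beta_n^{v_0}+k$ is weaker than the immediate $\overline{\beta_n}\le\beta_n^{v_0}$ (which the paper records directly from $\{d:v(\aa_n)<v(\bb_d)\}\subseteq\{d:\aa_n\not\subseteq\overline{\bb_d}\}$), so the roundabout $d'$-argument with Rees valuations of $\bb_{d'}$ is unnecessary; and your first comparison is precisely the paper's \eqref{ineq_1stchain}.
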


\begin{proof}
    For any valuation $v$ of $K$ with $v(\bb_k)>0$ and for every $n \in \NN$, we have
    \[
    \{d \mid  v(\aa_n)<v(\bb_d) \} \subseteq \{d \mid \aa_n \not \subseteq \overline{\bb_d} \} \subseteq \{d \mid \aa_n \not \subseteq \bb_d \}.
    \]
   This implies that $\beta_n^v\ge \overline{\beta_n} \ge \beta_n$ for $n \in \NN$.

   Since $\R(\overline{\bb_k^{\bullet}})$ is a finitely generated $\R(\bb_k^{\bullet})$-module, there exists a positive integer $m$ such that  $\overline{\bb_k^n} \subseteq \bb_k^{n-m}$ for all $n \ge m.$
   By \Cref{thm.ReesVal} and Proposition \ref{prop.betavalIJ}, there exists a Rees valuation $v_0$ of $\bb_k$ such that
    $$\frac{1}{\rhat(\aa_\bullet, \overline{\bb_k^\bullet})} =  \frac{\vhat_{0}(\aa_\bullet)}{v_0(\bb_k)}= \lim_{n \to \infty}  \frac{\beta_{n}^{v_0}(\aa_\bullet,\bb_k^{\bullet})}{n} = \lim_{n \to \infty}  \frac{\beta_{n}^{v_0}(\aa_\bullet,\bb_k^{\bullet})+m}{n}.$$
    Therefore,
    \[
    \frac{1}{\widehat{\rho}(\aa_\bullet,\overline{\bb_k^\bullet})} = \liminf_{n\rightarrow \infty} \frac{\beta_{n}^{v_0}(\aa_\bullet,\bb_k^{\bullet})+m}{n} \ge  \liminf_{n\rightarrow \infty} \frac{\overline{\beta_{n}}(\aa_\bullet,\bb_k^{\bullet})+m}{n}  \ge \liminf_{n\rightarrow \infty} \frac{\beta_{n}(\aa_\bullet,\bb_k^{\bullet})+m}{n}
    \]
    and
    \[
    \frac{1}{\widehat{\rho}(\aa_\bullet,\overline{\bb_k^\bullet})} = \limsup_{n\rightarrow \infty} \frac{\beta_{n}^{v_0}(\aa_\bullet,\bb_k^{\bullet})+m}{n} \ge  \limsup_{n\rightarrow \infty} \frac{\overline{\beta_{n}}(\aa_\bullet,\bb_k^{\bullet})+m}{n}  \ge \limsup_{n\rightarrow \infty} \frac{\beta_{n}(\aa_\bullet,\bb_k^{\bullet})+m}{n}.
    \]

    As $\aa_n\not \subseteq \bb_k^{\beta_n(\aa_\bullet,\bb_k^{\bullet})}$, we have $\aa_n \not \subseteq \overline{\bb_k^{\beta_n(\aa_\bullet,\bb_k^{\bullet})+m}}$ for all $n \in \NN$, and so, $\displaystyle \frac{n}{\beta_n(\aa_\bullet,\bb_k^{\bullet})+m}<\widehat{\rho}(\aa_\bullet,\overline{\bb_k^\bullet})$ by Lemma \ref{tech_lemm} (1).
     Consequently, $\displaystyle \frac{1}{\widehat{\rho}(\aa_\bullet, \overline{\bb_k^\bullet})}\le \liminf_{n \to \infty} \frac{\beta_n(\aa_\bullet,\bb_k^{\bullet})+m}{n}$. It follows that
     $$\lim_{n\rightarrow \infty} \frac{\beta_{n}(\aa_\bullet,\bb_k^{\bullet})}{n}  = \lim_{n\rightarrow \infty} \frac{\overline{\beta_{n}}(\aa_\bullet,\bb_k^{\bullet})}{n}  = \lim_{n\rightarrow \infty}  \frac{\beta_{n}^{v_0}(\aa_\bullet,\bb_k^{\bullet})}{n} = \frac{1}{\widehat{\rho}(\aa_\bullet,\overline{\bb_k^\bullet})}.$$

     Next, we claim that for $n \in \NN$,
     \begin{align}
    k\left( \beta_{n}^{v_0}(\aa_\bullet,\bb_k^{\bullet})-1\right) &\le \beta_{n}^{v_0}(\aa_\bullet,\bb_{\bullet}) \le k \beta_{n}^{v_0}(\aa_\bullet,\bb_k^{\bullet}), \label{ineq_1stchain}\\
     k\left( \overline{\beta_{n}}(\aa_\bullet,\bb_k^{\bullet})-1\right) &\le \overline{\beta_{n}}(\aa_\bullet,\bb_{\bullet}) \le k \overline{\beta_{n}}(\aa_\bullet,\bb_k^{\bullet}), \label{ineq_2ndchain}\\
     k\left( \beta_{n}(\aa_\bullet,\bb_k^{\bullet})-1\right) &\le \beta_{n}(\aa_\bullet,\bb_{\bullet}) \le k \beta_{n}(\aa_\bullet,\bb_k^{\bullet}). \label{ineq_3rdchain}
     \end{align}

For each of the inequalities on the left of the last three chains, we need the hypothesis that $\bb_\bullet$ is a filtration. For clarity, we prove \eqref{ineq_2ndchain}, similar arguments work for the remaining chains.

For the inequality on the left of \eqref{ineq_2ndchain}, it is harmless to assume that $\overline{\beta_n}(\aa_\bullet,\bb_\bullet)<\infty$. Per definition, $\overline{\beta_n}(\aa_\bullet,\bb_\bullet)=\inf\{d: \aa_n\not\subseteq \overline{\bb_d}\} <\infty$, so as $\bb_\bullet$ is a filtration, we get the finiteness of
\[
\overline{\beta_n}(\aa_\bullet,\bb_k^\bullet)=\beta_n(\aa_\bullet,\overline{\bb_k^\bullet})=\inf\{d: \aa_n\not\subseteq \overline{\bb_k^d}\}=\inf\{d: \aa_n\not\subseteq \overline{\bb_{kd}}\}.
\]
The last display yields
$$
\aa_n \subseteq \overline{\bb_{k({\overline{\beta_n}(\aa_\bullet,\bb_k^\bullet)-1)}}}.
$$
Since $\bb_\bullet$ is a filtration, $\aa_n\subseteq \overline{\bb_d}$ for all $d\le k({\overline{\beta_n}(\aa_\bullet,\bb_k^\bullet)-1)}$. Hence $k({\overline{\beta_n}(\aa_\bullet,\bb_k^\bullet)-1)} <\overline{\beta_n}(\aa_\bullet,\bb_\bullet)$. This proves the inequality on the left of \eqref{ineq_2ndchain}.

For the inequality on the right, again it is harmless to assume that $\overline{\beta_n}(\aa_\bullet,\bb_k^\bullet)<\infty$. By definition, $\aa_n \not\subseteq \overline{\bb_k^{\overline{\beta_n}(\aa_\bullet,\bb_k^\bullet)}}=\overline{\bb_{k{\overline{\beta_n}(\aa_\bullet,\bb_k^\bullet)}}}$. This yields $\overline{\beta_n}(\aa_\bullet,\bb_\bullet)\le k{\overline{\beta_n}(\aa_\bullet,\bb_k^\bullet)}$, as claimed.

Now, applying the Sandwich theorem for limits for \eqref{ineq_1stchain} --  \eqref{ineq_3rdchain}, we get $$\lim_{n\rightarrow \infty} \frac{\beta_{n}}{n}  = \lim_{n\rightarrow \infty} \frac{\overline{\beta_{n}}}{n}  = \lim_{n\rightarrow \infty}  \frac{\beta_{n}^{v_0}}{n} = \frac{k}{\widehat{\rho}(\aa_\bullet,\overline{\bb_k^\bullet})}=\frac{1}{\widehat{\rho}(\aa_\bullet,\overline{\bb_\bullet})},$$
     where the last equality holds by \Cref{asym_res_as_prod}. Hence, the assertion follows.
     \end{proof}

In the remaining of this section, we will focus on yet another version of resurgence, whose definition is motivated by \cite[Theorem 2.1 and Lemma 2.2]{BHJT} and \cite[Theorem 2.1]{HKZ}. This new version of resurgence arises as an actual limit of a well-constructed sequence, and is equal to the asymptotic resurgence number in practical situations; see \Cref{thm.rhatrlimrho}.

\begin{definition} \label{def.rlim}
	Let $\aa_\bullet$ and $\bb_\bullet$ be graded families of ideals in $S$.
	\begin{enumerate}
		\item Define a sequence $\{\rho^n(\aa_\bullet, \bb_\bullet)\}_{n \in \NN}$ as follows:
$$\rho^n(\aa_\bullet, \bb_\bullet) = \sup \left\{ \dfrac{s}{\beta_s(\aa_\bullet, \bb_\bullet)} ~\Big|~  \beta_s(\aa_\bullet, \bb_\bullet)< \infty \text{ and } s \ge n \right\}.$$
		\item Set
		$$\rlim(\aa_\bullet, \bb_\bullet) = \lim_{n \rightarrow \infty} \rho^n(\aa_\bullet, \bb_\bullet).$$
	\end{enumerate}
\end{definition}

Note that, in general, $\rho^n(\aa_\bullet, \bb_\bullet)$ can take infinite values.
Clearly, $\left\{\rho^n(\aa_\bullet, \bb_\bullet)\right\}_{n \ge 1}$ is a nonincreasing sequence, so it has a limit. That is, $\rlim(\aa_\bullet, \bb_\bullet)$ is well-defined.
Observe further that $\rlim(\aa_\bullet, \bb_\bullet) \le \rho^n(\aa_\bullet, \bb_\bullet)$ for all $n \ge 1$ and, by definition,
\begin{align}
\rho(\aa_\bullet, \bb_\bullet) = \sup_{s \in \NN} \left\{ \dfrac{s}{\beta_s(\aa_\bullet, \bb_\bullet)}~\Big|~ \beta_s(\aa_\bullet, \bb_\bullet)< \infty \text{ and } s \in \NN \right\} = \sup_{n \in \NN} \left\{ \rho^n(\aa_\bullet, \bb_\bullet)\right\}.\label{eq.rhosup}
\end{align}

It is easy to see that $\rhat(\aa_\bullet, \bb_\bullet) \le \rho^n(\aa_\bullet, \bb_\bullet) \le \rho(\aa_\bullet, \bb_\bullet)$ for all $n \ge 1$. Therefore,
$$\rhat(\aa_\bullet, \bb_\bullet) \le \rho^{\lim}(\aa_\bullet, \bb_\bullet) \le \rho(\aa_\bullet, \bb_\bullet).$$
In general, these inequalities can be strict as demonstrated in the following example.

\begin{example}
Let $I$ be a nonzero proper normal ideal in a Noetherian domain $S$. Consider  $\aa_\bullet$ and $\bb_\bullet $ with
$$\aa_i =I^i \text{ and } \bb_i = I^{\lceil{\sqrt i}\rceil} \text{ for all } i \ge 1.$$
As we have seen in \Cref{ex.low}.(3), $\rhat(\aa_\bullet, \bb_\bullet) = - \infty$ and $\rho(\aa_\bullet, \bb_\bullet) =\dfrac{1}{2}.$

We now compute $\rho^{\lim}(\aa_\bullet, \bb_\bullet)$. Observe that, if $r < s^2+1$ then $r \le s^2$, which implies that $\lceil \sqrt{r} \rceil \le s$, and so $\aa_s \subseteq \bb_r$. On the other hand, if $r = s^2+1$, then $\lceil \sqrt{r} \rceil = s+1$ and, therefore, $\aa_s \not\subseteq \bb_r$. Thus, for all $s \in \NN$, $\beta_s(\aa_\bullet,\bb_\bullet)=s^2+1.$

It is easily seen that $\left \{ \dfrac{s}{s^2+1} \right \}$ is a nonincreasing sequence. Thus, for every $n \in \NN$, $\rho^n(\aa_\bullet, \bb_\bullet) =\dfrac{n}{n^2+1}.$ Hence, $\rho^{\lim}(\aa_\bullet, \bb_\bullet) =0.$ Particularly, $\rhat(\aa_\bullet, \bb_\bullet) < \rho^{\lim}(\aa_\bullet, \bb_\bullet) < \rho(\aa_\bullet, \bb_\bullet)$.
\end{example}

The following results provide equalities involving $\rlim$ and are also useful for investigating the rationality of resurgence numbers as we will see in the last section.
\begin{theorem}\label{lem.rlim}
Let $\aa_\bullet, \bb_\bullet$ and $\bb'_\bullet$ be graded families of ideals in $S$. Suppose that $\bb_\bullet \le \bb'_\bullet$ and for some positive integer $k$, $\bb'_{i+k} \subseteq \bb_i$ for all $i \in \NN$. Then,
$$\rlim(\aa_\bullet, {\bb'_\bullet}) = \rlim(\aa_\bullet, \bb_\bullet).$$
\end{theorem}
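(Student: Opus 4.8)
The plan is to compare the two integer sequences $\beta_s:=\beta_s(\aa_\bullet,\bb_\bullet)$ and $\beta_s':=\beta_s(\aa_\bullet,\bb'_\bullet)$ directly, show they are finite on exactly the same set of indices and differ there by at most the additive constant $k$, and then deduce the equality of the two $\rlim$ quantities from an elementary fact about $\limsup$. This is the analogue, for $\rlim$, of what \Cref{cor.asym_res_int} does for $\rhat$.

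First I would record two observations. Since $\bb_\bullet\le\bb'_\bullet$, we have $\{d\mid \aa_s\not\subseteq\bb'_d\}\subseteq\{d\mid \aa_s\not\subseteq\bb_d\}$, so $\beta_s'\ge\beta_s$ for every $s$. Conversely, if $\beta_s<\infty$ then (an infimum of a nonempty subset of $\NN$ is attained) $\aa_s\not\subseteq\bb_{\beta_s}$, and the hypothesis $\bb'_{\beta_s+k}\subseteq\bb_{\beta_s}$ forces $\aa_s\not\subseteq\bb'_{\beta_s+k}$, hence $\beta_s'\le\beta_s+k$. Thus $\beta_s\le\beta_s'\le\beta_s+k$ whenever $\beta_s<\infty$, and in particular $\beta_s<\infty$ if and only if $\beta_s'<\infty$; let $T=\{s\mid \beta_s<\infty\}$ denote this common index set. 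Using the conventions $\bb_0=\bb_0'=S$ one has $\beta_s,\beta_s'\ge 1$ for all $s\in T$. If $T$ is finite then $\rho^n(\aa_\bullet,\bb_\bullet)=\rho^n(\aa_\bullet,\bb'_\bullet)=-\infty$ for $n\gg 0$ and the statement is trivial, so assume $T$ is infinite.

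For $s\in T$ we then have the chain
\[
\frac{1}{1+k}\cdot\frac{s}{\beta_s}\le\frac{s}{\beta_s+k}\le\frac{s}{\beta_s'}\le\frac{s}{\beta_s},
\]
the leftmost inequality using $\beta_s\ge 1$ (so $\beta_s+k\le\beta_s(1+k)$). Recall that $\{\rho^n\}_n$ is nonincreasing, so $\rlim(\aa_\bullet,\cdot)=\inf_n\rho^n(\aa_\bullet,\cdot)=\limsup_{s\in T,\,s\to\infty}\frac{s}{\beta_\bullet(\cdot)}$. Taking suprema over $\{s\in T\mid s\ge n\}$ and letting $n\to\infty$, the right half of the chain gives $\rlim(\aa_\bullet,\bb'_\bullet)\le\rlim(\aa_\bullet,\bb_\bullet)$ at once. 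For the reverse inequality it suffices, by the left half of the chain, to prove the purely numerical fact
\[
L:=\limsup_{s\in T,\,s\to\infty}\frac{s}{\beta_s}=\limsup_{s\in T,\,s\to\infty}\frac{s}{\beta_s+k}.
\]
The inequality $\ge$ here is clear; for $\le$ one argues by cases on $L$: if $L=\infty$ it follows from $\frac{s}{\beta_s+k}\ge\frac1{1+k}\frac{s}{\beta_s}$; if $L=0$ then $\frac{s}{\beta_s}\to 0$ (all terms positive) and $\frac{s}{\beta_s+k}$ is squeezed to $0$; if $0<L<\infty$, choose $s_j\in T$ with $\frac{s_j}{\beta_{s_j}}\to L$, note $\beta_{s_j}\to\infty$, so $\frac{\beta_{s_j}}{\beta_{s_j}+k}\to 1$ and $\frac{s_j}{\beta_{s_j}+k}=\frac{s_j}{\beta_{s_j}}\cdot\frac{\beta_{s_j}}{\beta_{s_j}+k}\to L$. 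Feeding this back yields $\rlim(\aa_\bullet,\bb'_\bullet)\ge L=\rlim(\aa_\bullet,\bb_\bullet)$, completing the argument.

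The only real subtlety — the ``hard part'', such as it is — is the bookkeeping around the possibly infinite values of $\beta_s$ and the degenerate cases $L\in\{-\infty,0,\infty\}$; once one sees that $\beta_s$ and $\beta_s'$ are simultaneously finite and differ by a bounded amount, the statement collapses to the shift-invariance of $\limsup\frac{s}{\beta_s}$ under $\beta_s\mapsto\beta_s+k$, which is elementary.
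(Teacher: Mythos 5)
Your proof is correct, and it takes a genuinely different and noticeably cleaner route than the paper's. The paper proves this theorem by splitting into two cases according to whether the supremum defining $\rho^r(\aa_\bullet,\bb_\bullet)$ is attained (eventually never, or infinitely often), and in each case constructs a sequence $s_n$ by hand to compare the two $\rlim$ values; the case analysis is the bulk of the argument and the bookkeeping is intricate. You instead observe at the outset that, once you discard the trivially finite index set, $\rlim(\aa_\bullet,\bb_\bullet)$ is just $\limsup_{s\in T}\frac{s}{\beta_s}$, where $T$ is the common set of indices with $\beta_s<\infty$, and that $\beta_s\le\beta_s'\le\beta_s+k$ on $T$. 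This reduces the entire theorem to the elementary analytic fact that $\limsup\frac{s}{\beta_s}$ is invariant under replacing $\beta_s$ by $\beta_s+k$ when $\beta_s\ge 1$, which you verify by a short three-way case split on the value of the limsup. The gains are conceptual clarity (the algebraic content is isolated in the bounds on $\beta_s$, and the rest is real analysis) and the elimination of the paper's attained/non-attained dichotomy. One small point worth making explicit in a final write-up: the identification $\inf_n\rho^n=\limsup_{s\in T, s\to\infty}\frac{s}{\beta_s}$ uses that $T$ is infinite so that the sets $\{s\ge n\}\cap T$ are always nonempty, which you did assume; and $\beta_{s_j}\to\infty$ in the finite-$L$ case requires $s_j\to\infty$, which holds because the limsup is taken over $s\to\infty$.
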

\begin{proof}
Since $\bb_\bullet \le \bb_\bullet'$, for every $r$, we have $\rho^r(\aa_\bullet,\bb_\bullet') \le \rho^r(\aa_\bullet,\bb_\bullet).$ Therefore, we have $\rho^{\lim}(\aa_\bullet,\bb_\bullet')\le \rho^{\lim}(\aa_\bullet,\bb_\bullet).$ If $\rho^{\lim}(\aa_\bullet,\bb_\bullet')=\infty$, then we are done. So we assume that $\rho^{\lim}(\aa_\bullet,\bb_\bullet')< \infty.$ This implies that for $r\gg 1$, $\rho^{r}(\aa_\bullet,\bb_\bullet')< \infty.$

Since  $\bb_\bullet \le \bb'_\bullet$, we have $\beta_s(\aa_\bullet,\bb_\bullet) \le \beta_s(\aa_\bullet,\bb'_\bullet)$ for all $s.$ Also, we have  $\beta_s(\aa_\bullet,\bb'_\bullet) \le \beta_s(\aa_\bullet,\bb_\bullet)+k$ for all $s$ as $\bb'_{i+k} \subseteq \bb_i$ for all $i \in \NN$. Now, we have the following cases:

\textsf{Case 1.} Suppose that there is a positive integer $r_0$ such that for all $r \ge r_0$, $$\rho^{r}(\aa_\bullet,\bb_\bullet) \not\in \left\{ \dfrac{s}{\beta_s(\aa_\bullet,\bb_\bullet)} ~\Big|~ \beta_s(\aa_\bullet, \bb_\bullet)< \infty \text{ and } s \ge r\right\}.$$
We claim that $\rho^{r}(\aa_\bullet,\bb_\bullet')=\rho^{r}(\aa_\bullet,\bb_\bullet)$ for all $r \ge \max\{ k,r_0\}$. Suppose that $\rho^{r}(\aa_\bullet,\bb_\bullet)=-\infty$, then $\aa_s \subseteq \bb_t$ for all $s \ge r$ and $t \in \NN$. Therefore, $\aa_s \subseteq \bb'_t$ for all $s \ge r$ and $t \in \NN$, and hence, $\rho^{r}(\aa_\bullet,\bb'_\bullet)=-\infty.$	So, assume that $\rho^{r}(\aa_\bullet,\bb_\bullet) \ge 0.$ Since $\rho^{r}(\aa_\bullet,\bb_\bullet) =\sup\left\{ \dfrac{s}{\beta_s(\aa_\bullet,\bb_\bullet)} ~\Big|~ \beta_{s}(\aa_\bullet,\bb_\bullet)< \infty \text{ and } s \ge r \right\}$, there exists a non-decreasing sequence $s_n$ of positive integers with $s_1 \ge r$ and $\beta_{s_n}(\aa_\bullet,\bb_\bullet)< \infty$ for all $n$ such that $$\displaystyle\lim_{n \to \infty} \dfrac{s_n}{\beta_{s_n}(\aa_\bullet,\bb_\bullet)} =\rho^{r}(\aa_\bullet,\bb_\bullet).$$ Consequently, we have $$\displaystyle\lim_{n \to \infty} \dfrac{s_n}{\beta_{s_n}(\aa_\bullet,\bb_\bullet)+k} =\rho^{r}(\aa_\bullet,\bb_\bullet).$$
Consider
\begin{align*} \rho^{r}(\aa_\bullet,\bb_\bullet') & = \sup\left\{ \dfrac{s}{\beta_s(\aa_\bullet,\bb_\bullet')} ~\Big|~ \beta_s(\aa_\bullet,\bb_\bullet')< \infty \text{ and } s \ge r \right\} \\ & \ge  \sup\left\{ \dfrac{s}{\beta_s(\aa_\bullet,\bb_\bullet)+k} ~\Big|~ \beta_s(\aa_\bullet,\bb_\bullet) < \infty \text{ and } s \ge r \right\} \\ & \ge \sup\left\{ \dfrac{s_n}{\beta_{s_n}(\aa_\bullet,\bb_\bullet)+k} ~\Big|~ n \in \NN \right\}  = \lim_{n \to \infty}\dfrac{s_n}{\beta_{s_n}(\aa_\bullet,\bb_\bullet)} =\rho^{r}(\aa_\bullet,\bb_\bullet).\end{align*} Therefore,  $\rho^{\lim}(\aa_\bullet,\bb_\bullet')=\rho^{\lim}(\aa_\bullet,\bb_\bullet)$.

\textsf{Case 2.} Suppose that $\rho^{r}(\aa_\bullet,\bb_\bullet) \in \left\{ \dfrac{s}{\beta_s(\aa_\bullet,\bb_\bullet)} ~\Big|~ \beta_s(\aa_\bullet,\bb_\bullet)< \infty \text{ and }s \ge r \right\}$ for infinitely many $r.$ First note that if for some $r$, $\rho^{r}(\aa_\bullet,\bb_\bullet) \in \left\{ \dfrac{s}{\beta_s(\aa_\bullet,\bb_\bullet)} ~\Big|~ \beta_s(\aa_\bullet,\bb_\bullet)< \infty \text{ and } s \ge r \right\}$, then there exists a positive integer $s$ such that $\beta_s(\aa_\bullet,\bb_\bullet)< \infty$ and $\rho^{r}(\aa_\bullet,\bb_\bullet) =\dfrac{s}{\beta_s(\aa_\bullet,\bb_\bullet)} $. Since  $\{\rho^{r}(\aa_\bullet,\bb_\bullet)\}$ is  non-increasing sequence of positive real numbers, we have  $\rho^{k}(\aa_\bullet,\bb_\bullet) =\dfrac{s}{\beta_s(\aa_\bullet,\bb_\bullet)} $ for all $r \le k \le s$. In particular, $\rho^{s}(\aa_\bullet,\bb_\bullet) =\dfrac{s}{\beta_s(\aa_\bullet,\bb_\bullet)} $. Therefore, there is a  non-decreasing sequence of positive integers $s_n$  such that for each $n$, $\beta_{s_n}(\aa_\bullet,\bb_\bullet)< \infty$ and  $\rho^{s_n}(\aa_\bullet,\bb_\bullet) = \dfrac{s_n}{\beta_{s_n}(\aa_\bullet,\bb_\bullet)}. $ Now, since $\{\rho^{r}(\aa_\bullet,\bb_\bullet)\}$ is  non-increasing sequence of positive real numbers and $\{\rho^{s_n}(\aa_\bullet,\bb_\bullet) \} $ is a non-increasing sub-sequence of $\{\rho^{r}(\aa_\bullet,\bb_\bullet)\}$, we have $$\rho^{\lim}(\aa_\bullet,\bb_\bullet)= \lim_{r \to \infty} \rho^r(\aa_\bullet,\bb_\bullet) = \lim_{n \to \infty} \rho^{s_n}(\aa_\bullet,\bb_\bullet) = \lim_{n \to \infty} \dfrac{s_n}{\beta_{s_n}(\aa_\bullet,\bb_\bullet)+k} \le \lim_{n \to \infty} \dfrac{s_n}{\beta_{s_n}(\aa_\bullet,\bb_\bullet')}.$$ Thus, $$\rho^{\lim}(\aa_\bullet,\bb_\bullet) \le \lim_{n \to \infty} \dfrac{s_n}{\beta_{s_n}(\aa_\bullet,\bb_\bullet')} \le \lim_{n \to \infty} \rho^{s_n}(\aa_\bullet,\bb_\bullet')= \lim_{r \to \infty} \rho^r(\aa_\bullet,\bb_\bullet')=\rho^{\lim}(\aa_\bullet,\bb_\bullet').$$ Hence, in both cases, we have $\rho^{\lim}(\aa_\bullet,\bb_\bullet')=\rho^{\lim}(\aa_\bullet,\bb_\bullet).$
\end{proof}

\begin{corollary}\label{cor.rholimb1equi}
Let $\aa_\bullet$ be a graded family of ideals in $S$ and $\bb_\bullet$ be a filtration of ideals in $S$ such that $\R(\overline{\bb_\bullet})$ is a finitely generated $\R(\bb_\bullet)$-module. Then,
$$\rlim(\aa_\bullet, \overline{\bb_\bullet}) = \rlim(\aa_\bullet, \bb_\bullet).$$
\end{corollary}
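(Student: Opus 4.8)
The plan is to deduce this corollary directly from \Cref{lem.rlim}, applied with $\bb_\bullet' = \overline{\bb_\bullet}$. First I would record the two order-theoretic inputs that \Cref{lem.rlim} requires. One is that $\overline{\bb_\bullet} = \{\overline{\bb_i}\}_{i \ge 1}$ is again a graded family, which holds because $\overline{\bb_p}\,\overline{\bb_q} \subseteq \overline{\bb_p\bb_q} \subseteq \overline{\bb_{p+q}}$ for all $p,q \ge 1$; the other is that $\bb_\bullet \le \overline{\bb_\bullet}$, which is immediate since $\bb_i \subseteq \overline{\bb_i}$ for every $i$. Both of these are automatic, so the only point that needs an argument is the existence of a positive integer $k$ with $\overline{\bb_{i+k}} \subseteq \bb_i$ for all $i \ge 1$.

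For that step I would reuse the module-finiteness argument already carried out inside the proof of \Cref{thm.asym_res_int}. Since $\R(\overline{\bb_\bullet})$ is a finitely generated $\R(\bb_\bullet)$-module, choose a homogeneous set of generators $\{u_1,\ldots,u_m\}$, put $d_j = \deg u_j$ and $k = \max_{1 \le j \le m} d_j$. Then for every $n \ge k$ one obtains $\overline{\bb_n} = \sum_{j=1}^m \bb_{n-d_j}\,\overline{\bb_{d_j}} \subseteq \bb_{n-k}$, where the inclusion uses that $\bb_\bullet$ is a filtration, so $\bb_{n-d_j} \subseteq \bb_{n-k}$ (and $\overline{\bb_{d_j}} \subseteq S$); taking $n = i+k$ gives $\overline{\bb_{i+k}} \subseteq \bb_i$ for all $i \ge 1$. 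Note that the hypothesis that $\bb_\bullet$ is a filtration is genuinely used here, exactly as in \Cref{thm.asym_res_int}.

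With these facts in hand, \Cref{lem.rlim} applies with $\bb_\bullet' = \overline{\bb_\bullet}$ and yields $\rlim(\aa_\bullet, \overline{\bb_\bullet}) = \rlim(\aa_\bullet, \bb_\bullet)$, completing the proof. I expect no real obstacle: the corollary is essentially a specialization of \Cref{lem.rlim}, the sole nontrivial ingredient being the existence of $k$, which is handled verbatim as above (or, alternatively, one could simply observe that the pair $(\bb_\bullet, \overline{\bb_\bullet})$ satisfies the hypotheses of \Cref{cor.asym_res_int} and extract the same inclusion from there).
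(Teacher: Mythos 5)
Your proposal is correct and matches the paper's argument: both deduce the result from \Cref{lem.rlim} after extracting, from the module-finiteness of $\R(\overline{\bb_\bullet})$ over $\R(\bb_\bullet)$ (exactly as in the proof of \Cref{thm.asym_res_int}), a positive integer $k$ with $\overline{\bb_{i+k}} \subseteq \bb_i$ for all $i$, and noting $\bb_\bullet \le \overline{\bb_\bullet}$. The extra detail you supply on the generator-degree bound is just an unfolding of the same step the paper cites by reference.
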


\begin{proof}
Since $\R(\overline{\bb_\bullet})$ is a finitely generated $\R(\bb_\bullet)$-module, it follows from the proof of \Cref{thm.asym_res_int} that there exists a positive integer $k$ such that $\overline{\bb_{i+k}} \subseteq \bb_i$ for all $i \in \NN$. Also,  $\bb_\bullet \le \overline{\bb_\bullet}$. Hence, the assertion follows from \Cref{lem.rlim}.
\end{proof}

\begin{corollary}
    \label{lem.rholimb1equi}
    Let $\aa_\bullet$ and $\bb_\bullet$ be graded families of ideals in $S$ such that $\bb_\bullet$ is $\bb$-equivalent, for some ideal $\bb \subseteq S$. Then $$\rlim(\aa_\bullet, \bb_\bullet) = \rlim(\aa_\bullet, \bb^\bullet).$$
\end{corollary}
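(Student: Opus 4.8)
The plan is to deduce this immediately from \Cref{lem.rlim}. Recall that, by definition, $\bb_\bullet$ being $\bb$-equivalent means there is a positive integer $k$ with $\bb_{i+k}\subseteq \bb^i\subseteq \bb_i$ for all $i\ge 1$. Note first that $\bb^\bullet=\{\bb^i\}_{i\ge 1}$ is itself a graded family. The inclusions $\bb^i\subseteq \bb_i$ say precisely that $\bb^\bullet\le \bb_\bullet$, while the inclusions $\bb_{i+k}\subseteq \bb^i$ say that the larger of the two families, $\bb_\bullet$, satisfies $(\bb_\bullet)_{i+k}\subseteq (\bb^\bullet)_i$ for all $i\in\NN$. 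Thus the hypotheses of \Cref{lem.rlim} are satisfied once we take the family called $\bb_\bullet$ in the statement of \Cref{lem.rlim} to be $\bb^\bullet$, and the family called $\bb'_\bullet$ there to be $\bb_\bullet$. Applying that result yields $\rlim(\aa_\bullet,\bb_\bullet)=\rlim(\aa_\bullet,\bb^\bullet)$, as desired.

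Since the entire content is already packaged in \Cref{lem.rlim}, there is essentially no obstacle here; the only point requiring a moment of care is to match the direction of the inclusions in the definition of $\bb$-equivalence with the asymmetric hypotheses $\bb_\bullet\le \bb'_\bullet$ and $\bb'_{i+k}\subseteq \bb_i$ of that theorem, i.e.\ to remember that it is the family of ordinary powers $\bb^\bullet$ that plays the role of the "smaller" family $\bb_\bullet$ in \Cref{lem.rlim}.
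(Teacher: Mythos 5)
Your proof is correct and matches the paper's own argument, which likewise reduces the corollary to \Cref{lem.rlim} via the containments $\bb_{i+k}\subseteq \bb^i\subseteq \bb_i$. The only difference is that you spell out the role substitution ($\bb^\bullet$ playing the smaller family) explicitly, which the paper leaves implicit.
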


\begin{proof}
  There is a positive integer $k$ such that $\bb_{i+k} \subseteq \bb^i \subseteq \bb_i$ for all $i\in \NN$ as $\bb_\bullet$ is $\bb$-equivalent graded family. The assertion now follows from \Cref{lem.rlim}.
\end{proof}
We now arrive at our next main result of this section.
\begin{theorem}\label{thm.rhatrlimrho}
Let $S$ be a domain as in \Cref{asym_res_ideal}. Let $\aa_\bullet$ be filtration of nonzero ideals in $S$, and $\bb_\bullet$ be a $\bb$-equivalent graded family, for some ideal $\bb \subseteq S$. Then,
$$\rhat(\aa_\bullet, \overline{\bb_\bullet})= \rlim(\aa_\bullet, \overline{\bb_\bullet})=\rhat(\aa_\bullet, \bb_\bullet) = \rlim(\aa_\bullet, \bb_\bullet).$$
\end{theorem}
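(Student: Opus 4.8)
The plan is to deduce everything from what is already known about the filtration $\bb^\bullet=\{\bb^i\}_{i\ge 1}$ of ordinary powers of $\bb$, using that the four families $\bb_\bullet$, $\overline{\bb_\bullet}$, $\bb^\bullet$ and $\overline{\bb^\bullet}$ are all $\bb$-equivalent. Once this is in place, \Cref{thm.b1equivalent} together with \Cref{asym_res_ideal} will identify all four asymptotic resurgence numbers $\rhat(\aa_\bullet,\bb_\bullet)$, $\rhat(\aa_\bullet,\overline{\bb_\bullet})$, $\rhat(\aa_\bullet,\bb^\bullet)$, $\rhat(\aa_\bullet,\overline{\bb^\bullet})$ with the single value $\rho(\aa_\bullet,\overline{\bb^\bullet})$, while \Cref{lem.rholimb1equi} will identify all four numbers $\rlim(\aa_\bullet,\bb_\bullet)$, $\rlim(\aa_\bullet,\overline{\bb_\bullet})$, $\rlim(\aa_\bullet,\bb^\bullet)$, $\rlim(\aa_\bullet,\overline{\bb^\bullet})$ with $\rlim(\aa_\bullet,\bb^\bullet)$. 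It then remains only to match these two common values, which I will do at the level of $\overline{\bb^\bullet}$.

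The one point that is not purely formal is to check that $\overline{\bb_\bullet}$ is again $\bb$-equivalent. One inclusion is immediate: $\bb^i\subseteq \bb_i\subseteq \overline{\bb_i}$ for all $i\ge 1$. For the other, I will use that, for a domain $S$ of one of the types allowed in \Cref{asym_res_ideal}, \cite[Proposition 5.3.4]{SH2006} yields an integer $l\ge 1$ with $\overline{\bb^{j+l}}\subseteq \bb^j$ for all $j\ge 1$. Taking integral closures in $\bb_{i+k}\subseteq \bb^i$ gives $\overline{\bb_{i+k}}\subseteq \overline{\bb^i}$, so $\overline{\bb_{j+k+l}}\subseteq \overline{\bb^{j+l}}\subseteq \bb^j$ for all $j\ge 1$; hence $\overline{\bb_\bullet}$ is $\bb$-equivalent with shift $k+l$. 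The same computation applied to $\bb_\bullet=\bb^\bullet$ shows $\overline{\bb^\bullet}$ is $\bb$-equivalent, and $\bb^\bullet$ itself is $\bb$-equivalent with shift $0$.

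With this in hand I would assemble the proof as follows. By \Cref{thm.b1equivalent} applied to $\bb_\bullet$, $\rhat(\aa_\bullet,\bb_\bullet)=\rhat(\aa_\bullet,\overline{\bb_\bullet})=\rhat(\aa_\bullet,\bb^\bullet)$, and by \Cref{asym_res_ideal}, $\rhat(\aa_\bullet,\bb^\bullet)=\rhat(\aa_\bullet,\overline{\bb^\bullet})=\rho(\aa_\bullet,\overline{\bb^\bullet})$; denote this common value by $\rho_0$. Applying \Cref{lem.rholimb1equi} to each of the $\bb$-equivalent families $\bb_\bullet$, $\overline{\bb_\bullet}$ and $\overline{\bb^\bullet}$ yields $\rlim(\aa_\bullet,\bb_\bullet)=\rlim(\aa_\bullet,\overline{\bb_\bullet})=\rlim(\aa_\bullet,\overline{\bb^\bullet})=\rlim(\aa_\bullet,\bb^\bullet)$. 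Finally, the inequalities $\rhat(\aa_\bullet,\overline{\bb^\bullet})\le \rlim(\aa_\bullet,\overline{\bb^\bullet})\le \rho(\aa_\bullet,\overline{\bb^\bullet})$ recorded earlier in this section, together with $\rhat(\aa_\bullet,\overline{\bb^\bullet})=\rho(\aa_\bullet,\overline{\bb^\bullet})=\rho_0$, force $\rlim(\aa_\bullet,\overline{\bb^\bullet})=\rho_0$. Combining the last three assertions, all four quantities appearing in the statement equal $\rho_0$, which is the claim.

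I do not expect a genuine obstacle here: the argument is essentially bookkeeping once the $\bb$-equivalence of $\overline{\bb_\bullet}$ is available. The only mild subtleties are (i) the appeal to the module-finiteness of $\R(\overline{\bb^\bullet})$ over $\R(\bb^\bullet)$, which holds precisely because $S$ is of one of the types in \Cref{asym_res_ideal} (and fails in general, as Nagata's example shows), and (ii) the degenerate cases for the ideal $\bb$. If $\bb=S$, then $\bb_\bullet$ and $\overline{\bb_\bullet}$ are the unit family and all four invariants equal $-\infty$; if $\bb=0$ the families are eventually zero and, since every $\aa_s$ is a nonzero ideal, all four invariants equal $+\infty$. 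In both cases the asserted equalities hold trivially, so one may assume $\bb$ is a nonzero proper ideal throughout the main argument, which is in any event forced once $\bb_\bullet$ is required to consist of nonzero ideals.
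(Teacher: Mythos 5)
Your proof is correct and follows essentially the same route as the paper's. The key ingredients are identical: the squeeze $\rhat \le \rlim \le \rho$ together with the equality $\rhat = \rho$ from \Cref{thm.b1equivalent}, the verification (via \cite[Proposition 5.3.4]{SH2006}) that $\overline{\bb_\bullet}$ is again $\bb$-equivalent, the comparison \Cref{lem.rholimb1equi}, and \Cref{thm.b1equivalent} for identifying the asymptotic resurgences; the only cosmetic difference is that you perform the squeeze at the level of $\overline{\bb^\bullet}$ and propagate, whereas the paper performs it directly at $\overline{\bb_\bullet}$.
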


\begin{proof} It follows from the definition that
$\rhat(\aa_\bullet, \overline{\bb_\bullet}) \le \rlim(\aa_\bullet, \overline{\bb_\bullet}) \le \rho(\aa_\bullet, \overline{\bb_\bullet}).$
Moreover, by \Cref{thm.b1equivalent}, we have $\rhat(\aa_\bullet, \overline{\bb_\bullet}) = \rho(\aa_\bullet, \overline{\bb_\bullet})$. Thus, we must have
$$\rhat(\aa_\bullet, \overline{\bb_\bullet}) = \rlim(\aa_\bullet, \overline{\bb_\bullet}) = \rho(\aa_\bullet, \overline{\bb_\bullet}).$$

On the other hand, since $\bb_\bullet$ is $\bb$-equivalent, there exists an integer $k \in \NN$ such that, for all $n \ge \NN$, $\bb_{n+k} \subseteq \bb^n$, whence
$$\overline{\bb_{n+k}} \subseteq \overline{\bb^n}.$$
By \cite[Proposition 5.3.4]{SH2006}, there exists an integer $k' \in \NN$ such that, for all $n > k'$,
$$\overline{\bb^{n}} \subseteq \bb^{n-k'}.$$
 Therefore, for all $n$, $$\overline{\bb_{n+k+k'}} \subseteq {\bb^n} \subseteq \bb_n \subseteq \overline{\bb_n},$$
 and hence, $\overline{\bb_\bullet}$ is also $\bb$-equivalent.  \Cref{lem.rholimb1equi} then implies that $\rlim(\aa_\bullet, {\bb_\bullet}) = \rlim(\aa_\bullet,{\bb}^{\bullet})$ and $\rlim(\aa_\bullet, \overline{\bb_\bullet}) = \rlim(\aa_\bullet,{\bb}^{\bullet})$.
The assertion now follows from \Cref{thm.b1equivalent} as $\rhat(\aa_\bullet, \overline{\bb_\bullet}) = \rhat(\aa_\bullet, \bb_\bullet)$.
\end{proof}

\begin{remark}
    \label{rmk.lowerlim}
Let $\aa_\bullet$ and $\bb_\bullet$ be graded families of ideals in $S$. For $n \in \NN$, set $$\rho_n(\aa_\bullet,\bb_\bullet) := \sup\left\{ \frac{s}{r} \Big|~ \aa_s \not\subseteq \bb_r \text{ and } s,r \ge n \right\}.$$
The following limit can be shown to exist and, thus, we can define $$\rho_{\lim}(\aa_\bullet,\bb_\bullet) :=\lim_{n \to \infty} \rho_n(\aa_\bullet,\bb_\bullet).$$
The notion of $\rho_{\lim}$ is a direct generalization of similar constructions that were investigated in \cite{BHJT,HKZ}.

Observe that $ \rho_{\text{lim}}(\aa_\bullet,\bb_\bullet) \le \rlim(\aa_\bullet,\bb_\bullet)$ and the equality holds when $\rlim(\aa_\bullet,\bb_\bullet) < \infty$. Therefore, a direct generalization of \cite[Theorem 2.1 and Lemma 2.2]{BHJT} and \cite[Theorem 2.1]{HKZ} in terms of $\rho_{\lim}(\aa_\bullet, \bb_\bullet)$, that is similar to \Cref{thm.rhatrlimrho}, can be obtained. We leave the details to the interested reader.

\end{remark}

Finally, as a direct consequence of \Cref{lem.rholimb1equi}, we recover another version of  \Cref{thm.rhatrlimrho}, that is, when $\bb_\bullet = \bb^\bullet$, but without the filtration assumption on $\aa_\bullet$.

\begin{corollary}
    \label{cor.rholimrhohat}
    Let $S$ be a domain as in \Cref{asym_res_ideal}. Suppose that $\aa_\bullet$ is a graded family and $\bb$ be a nonzero ideal in $S$. Then,
    $$\widehat{\rho}(\aa_\bullet,\bb^\bullet) = \widehat{\rho}(\aa_\bullet,\overline{\bb^\bullet}) = \rlim(\aa_\bullet,\overline{\bb^\bullet}) = \rlim(\aa_\bullet,\bb^\bullet).$$
\end{corollary}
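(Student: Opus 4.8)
The plan is to deduce the fourfold equality by routing everything through the limit resurgence $\rlim$. The reason for this detour is that $\aa_\bullet$ is here assumed only to be a graded family, not a filtration, so the ``usual'' comparison \Cref{asym_res_int} (whose proof passes through \Cref{asp_res_bound} and hence uses the monotonicity $\aa_{st}\subseteq\aa_{s_n}$) is unavailable; by contrast, \Cref{cor.rholimb1equi}, \Cref{tech_lemm}(1), and the elementary inequalities $\rhat(\aa_\bullet,\bb_\bullet)\le\rlim(\aa_\bullet,\bb_\bullet)\le\rho(\aa_\bullet,\bb_\bullet)$ recorded after \Cref{def.rlim} all hold for an arbitrary graded family $\aa_\bullet$. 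First I would record the two facts about $\bb^\bullet$ that make these tools applicable: $\R(\bb^\bullet)$ is standard graded, and since $S$ is of one of the types in \Cref{asym_res_ideal}, \cite[Proposition 5.3.4]{SH2006} gives that $\R(\overline{\bb^\bullet})$ is a finitely generated $\R(\bb^\bullet)$-module (equivalently, $\overline{\bb^{n+k}}\subseteq\bb^n$ for all $n$ and some $k$); moreover, for every valuation $v$ of $K$ supported on $S$ one has $v(\bb^n)=nv(\bb)$, so $\vhat(\bb^\bullet)=v(\bb)=v(\bb_1)$ for all $v\in\RV(\bb)$, which is exactly the hypothesis needed to apply \Cref{tech_lemm}(1) with $\bb_\bullet=\bb^\bullet$.

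With these in place the argument is short. \emph{Step 1.} Apply \Cref{cor.rholimb1equi} to the filtration $\bb_\bullet=\bb^\bullet$ to get $\rlim(\aa_\bullet,\overline{\bb^\bullet})=\rlim(\aa_\bullet,\bb^\bullet)$; this is the identification of the third quantity with the fourth. \emph{Step 2.} Apply \Cref{tech_lemm}(1) to $\bb^\bullet$ to obtain $\rho(\aa_\bullet,\overline{\bb^\bullet})=\rhat(\aa_\bullet,\overline{\bb^\bullet})$; since $\rhat(\aa_\bullet,\overline{\bb^\bullet})\le\rlim(\aa_\bullet,\overline{\bb^\bullet})\le\rho(\aa_\bullet,\overline{\bb^\bullet})$, all three collapse to one value, giving $\rhat(\aa_\bullet,\overline{\bb^\bullet})=\rlim(\aa_\bullet,\overline{\bb^\bullet})$. \emph{Step 3.} From $\bb^\bullet\le\overline{\bb^\bullet}$ and \Cref{lem.estimate} we get $\rhat(\aa_\bullet,\overline{\bb^\bullet})\le\rhat(\aa_\bullet,\bb^\bullet)$; conversely $\rhat(\aa_\bullet,\bb^\bullet)\le\rlim(\aa_\bullet,\bb^\bullet)=\rlim(\aa_\bullet,\overline{\bb^\bullet})=\rhat(\aa_\bullet,\overline{\bb^\bullet})$ by the elementary inequalities together with Steps 1 and 2, so $\rhat(\aa_\bullet,\bb^\bullet)=\rhat(\aa_\bullet,\overline{\bb^\bullet})$. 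Concatenating Steps 1--3 yields the stated chain of equalities.

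The main obstacle is conceptual rather than computational: one must recognize that, although $\rhat(\aa_\bullet,\bb^\bullet)=\rhat(\aa_\bullet,\overline{\bb^\bullet})$ is precisely the kind of statement that normally requires $\aa_\bullet$ to be a filtration, the corresponding statement for $\rlim$ in \Cref{lem.rlim}/\Cref{cor.rholimb1equi} does not, so $\rlim$ can serve as the bridge even when $\aa_\bullet$ is only a graded family. The hypothesis on $S$ enters, exactly as in \Cref{asym_res_ideal}, solely to guarantee that $\R(\overline{\bb^\bullet})$ is module-finite over $\R(\bb^\bullet)$ — a property that can genuinely fail for a general one-dimensional Noetherian local domain, as \Cref{ex_Nagata} shows.
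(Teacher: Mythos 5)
Your argument is correct and follows essentially the same route as the paper: collapse the chain $\rhat \le \rlim \le \rho$ on $\overline{\bb^\bullet}$ using \Cref{tech_lemm}(1), carry $\rlim$ across to $\bb^\bullet$, and then squeeze $\rhat(\aa_\bullet,\bb^\bullet)$ between $\rhat(\aa_\bullet,\overline{\bb^\bullet})$ and $\rlim$. The only superficial divergence is in Step 1: you invoke \Cref{cor.rholimb1equi} (applied to $\bb_\bullet=\bb^\bullet$, using that $\R(\overline{\bb^\bullet})$ is module-finite over $\R(\bb^\bullet)$), whereas the paper invokes \Cref{lem.rholimb1equi} (applied to $\bb_\bullet=\overline{\bb^\bullet}$, noting it is $\bb$-equivalent); both are immediate consequences of \Cref{lem.rlim} and rely on the same input from \cite[Proposition 5.3.4]{SH2006} under the hypothesis on $S$, so this is a cosmetic, not substantive, difference. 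Your framing of \emph{why} the detour through $\rlim$ is forced — namely, that $\aa_\bullet$ is not assumed to be a filtration, so \Cref{asym_res_int} is unavailable — correctly identifies the one conceptual point in the argument.
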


\begin{proof}
Note that $\overline{\bb^{\bullet}}$ is a $\bb$-equivalent family. Therefore, by \Cref{lem.rholimb1equi},  $ \rlim(\aa_\bullet,\overline{\bb^\bullet}) = \rlim(\aa_\bullet,\bb^\bullet).$  Using  \Cref{lem.estimate}, the discussion after \Cref{def.rlim}, and \Cref{tech_lemm},  we have $\widehat{\rho}(\aa_\bullet,\overline{\bb^\bullet}) \le \rhat(\aa_\bullet, \bb^\bullet) \le \rho^{\lim}(\aa_\bullet, \bb^\bullet)= \rlim(\aa_\bullet,\overline{\bb^\bullet}) \le \rho(\aa_\bullet,\overline{\bb^\bullet}) = \rhat(\aa_\bullet,\overline{\bb^\bullet})$. Hence, the assertion follows.
\end{proof}


\section{Finiteness and rationality of resurgence numbers} \label{sec.finite}

In this section, we shall discuss situations where the resurgence and asymptotic resurgence numbers are finite and rational.
Note that, by \Cref{ex.irrational}, any positive real number can be realized as the resurgence or asymptotic resurgence of a pair of graded families of ideals in $S$. Main results in this section are stated in Theorems \ref{linearly_smaller} and \ref{thm.rhoRat}; see also \Cref{cor.rhoRatLim}.

Observe that, for any graded families $\aa_\bullet$ and $\bb_\bullet$ of ideals in $S$, by \Cref{lem.estimate}, $\rho(\aa_\bullet,\bb_\bullet) \leq \rho(\aa_\bullet,\bb_1^\bullet)$. Thus, if $\rho(\aa_\bullet,\bb_1^\bullet)< \infty$, then $\rho(\aa_\bullet,\bb_\bullet)< \infty.$ We shall prove that when $\bb_\bullet$ is a Noetherian filtration, the converse also holds.

\begin{theorem}
Let $\aa_\bullet$ be a graded family and let $\bb_\bullet$ be a Noetherian filtration of ideals in $S$. Then $\rho(\aa_\bullet,\bb_1^\bullet)< \infty$ if and only if $\rho(\aa_\bullet,\bb_\bullet)< \infty.$
\end{theorem}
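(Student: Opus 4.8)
The plan is to prove the two implications separately; only the converse to the observation made just before the statement needs the Noetherian hypothesis.

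One direction is free and was essentially already noted: since $\bb_\bullet$ is a graded family, $\bb_1^i\subseteq\bb_i$ for all $i$, i.e. $\bb_1^\bullet\le\bb_\bullet$, so \Cref{lem.estimate} gives $\rho(\aa_\bullet,\bb_\bullet)\le\rho(\aa_\bullet,\bb_1^\bullet)$; hence $\rho(\aa_\bullet,\bb_1^\bullet)<\infty$ forces $\rho(\aa_\bullet,\bb_\bullet)<\infty$. No finiteness or Noetherian assumption is used here.

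For the converse I would assume $\rho(\aa_\bullet,\bb_\bullet)<\infty$ and exploit that $\bb_\bullet$ is a \emph{Noetherian} filtration. By the folklore fact recalled in \Cref{sec.viaIntegralClosure} (see \cite[Remark 2.4]{Rat} and \cite[Proposition 2.1]{Sch88}) there is an integer $k\ge 1$ with $\R^{[k]}(\bb_\bullet)$ a standard graded $S$-algebra, so $\bb_{kn}=\bb_k^n$ for all $n\ge 0$. Since $\bb_\bullet$ is a filtration, $\bb_k\subseteq\bb_1$, hence $\bb_k^i\subseteq\bb_1^i$ for all $i$, that is $\bb_k^\bullet\le\bb_1^\bullet$. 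Combining \Cref{lem.estimate} with \Cref{res_as_prod} then yields
\[
\rho(\aa_\bullet,\bb_1^\bullet)\ \le\ \rho(\aa_\bullet,\bb_k^\bullet)\ \le\ k\,\rho(\aa_\bullet,\bb_\bullet)\ <\ \infty .
\]
Equivalently, one can argue by hand: if $\aa_s\not\subseteq\bb_1^r$, then since $\bb_{kr}=\bb_k^r\subseteq\bb_1^r$ we also have $\aa_s\not\subseteq\bb_{kr}$, so $\tfrac{s}{kr}\le\rho(\aa_\bullet,\bb_\bullet)$, and taking the supremum over all such pairs $(s,r)$ gives the same bound. In passing this records the sharper estimate $\rho(\aa_\bullet,\bb_\bullet)\le\rho(\aa_\bullet,\bb_1^\bullet)\le k\,\rho(\aa_\bullet,\bb_\bullet)$, so the two resurgence numbers are finite simultaneously.

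I do not expect a genuine obstacle here: the only nontrivial input is the reduction of a Noetherian filtration to a family whose $k$-th Veronese algebra is standard graded, which is already invoked elsewhere in the paper, after which the containment $\bb_k^r\subseteq\bb_1^r$ and the two cited lemmas finish the argument. The one point worth a careful word is the degenerate situation in which $\aa_s\subseteq\bb_1^r$ (equivalently $\aa_s\subseteq\bb_r$) for all $s,r$: there the relevant resurgence number is $-\infty$, but the displayed chain of inequalities remains valid with the usual conventions, so the equivalence still holds.
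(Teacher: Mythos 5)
Your proof is correct and takes essentially the same route as the paper's: the forward direction is the monotonicity observation via \Cref{lem.estimate}, and for the converse both you and the paper reduce via Schenzel to a $k$ with $\R^{[k]}(\bb_\bullet)$ standard graded, apply \Cref{res_as_prod} to bound $\rho(\aa_\bullet,\bb_k^\bullet)$, and then use $\bb_k\subseteq\bb_1$ to conclude $\rho(\aa_\bullet,\bb_1^\bullet)\le\rho(\aa_\bullet,\bb_k^\bullet)<\infty$.
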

\begin{proof}
Assume that $\rho(\aa_\bullet,\bb_\bullet)< \infty.$ Since $\bb_\bullet$ is a Noetherian filtration, by \cite[Proposition 2.1]{Sch88},  there exists a positive integer $k$ such that $\R^{[k]}(\aa_\bullet)$ is a standard graded $S$-algebra. Now, by Lemma \ref{res_as_prod}, $\rho(\aa_\bullet,\bb_k^{\bullet}) \le k \rho(\aa_\bullet,\bb_\bullet)$. Therefore,  $\rho(\aa_\bullet,\bb_k^{\bullet})<\infty$. Let $s,r$ be positive integers such that $\aa_s \not\subseteq \bb_1^r$.  As $\bb_k \subseteq \bb_1$, we get that $\aa_s \not\subseteq \bb_k^r$. Hence, by definition, $\rho(\aa_\bullet,\bb_1^{\bullet}) \le \rho(\aa_\bullet,\bb_k^{\bullet}) < \infty.$
\end{proof}

The resurgence number takes $-\infty$ value in a very special case, as seen in the next lemma.

\begin{lemma}\label{res_-infty}
	Let $\aa_\bullet $ and $\bb_\bullet $ be filtration of ideals in $S$. Then, $\rho(\aa_\bullet, \bb_\bullet) = -\infty$ if and only if $\aa_1 \subseteq \bigcap_{i\ge 1} \bb_i$.
\end{lemma}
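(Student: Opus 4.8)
The plan is to unwind the definition of the resurgence number directly, so that the statement becomes essentially a tautology once the filtration hypotheses are used in the right place. Recall that
$$\rho(\aa_\bullet,\bb_\bullet) = \sup\left\{ \tfrac{s}{r} ~\Big|~ s,r\in\NN,\ \aa_s\not\subseteq\bb_r \right\},$$
with the convention $\sup\varnothing = -\infty$. Since every ratio $s/r$ appearing in this set is a positive rational number, the supremum equals $-\infty$ \emph{precisely} when the index set $\{(s,r)\in\NN^2 : \aa_s\not\subseteq\bb_r\}$ is empty; equivalently, $\rho(\aa_\bullet,\bb_\bullet)=-\infty$ if and only if $\aa_s\subseteq\bb_r$ for all $s,r\ge 1$. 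So the first step is simply to record this reformulation.

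For the ``if'' direction, I would assume $\aa_1\subseteq\bigcap_{i\ge 1}\bb_i$ and show the index set is empty. Fix $s,r\ge 1$. Because $\aa_\bullet$ is a filtration, $\aa_s\subseteq\aa_{s-1}\subseteq\cdots\subseteq\aa_1$, so $\aa_s\subseteq\aa_1\subseteq\bb_r$, using the hypothesis for the last containment. Hence $\aa_s\subseteq\bb_r$ for all $s,r$, and by the reformulation $\rho(\aa_\bullet,\bb_\bullet)=-\infty$.

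For the ``only if'' direction, assume $\rho(\aa_\bullet,\bb_\bullet)=-\infty$, so by the reformulation $\aa_s\subseteq\bb_r$ for every pair $(s,r)$. Specializing to $s=1$ gives $\aa_1\subseteq\bb_r$ for all $r\ge 1$, that is, $\aa_1\subseteq\bigcap_{i\ge 1}\bb_i$, as desired.

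There is essentially no genuine obstacle here: the only point requiring a moment of care is the bookkeeping around the convention $\sup\varnothing=-\infty$, together with the observation that positivity of the ratios $s/r$ forces the value $-\infty$ to occur exactly when the index set is empty, so that no ``boundary'' phenomenon (such as an infimum of positive numbers collapsing to $0$) can interfere. Note that the filtration hypothesis on $\aa_\bullet$ is used only in the ``if'' direction, to pass from $\aa_1$ down to $\aa_s$; the filtration hypothesis on $\bb_\bullet$ is not actually needed for either implication, though it costs nothing to keep it in the statement for uniformity with the rest of the section.
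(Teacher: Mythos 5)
Your proof is correct and follows essentially the same route as the paper's: reformulate $\rho(\aa_\bullet,\bb_\bullet)=-\infty$ as ``$\aa_s\subseteq\bb_r$ for all $s,r$'' via the $\sup\varnothing=-\infty$ convention, and then observe that this is equivalent to $\aa_1\subseteq\bigcap_{i\ge 1}\bb_i$ using the decreasing property of $\aa_\bullet$. Your remark that the filtration hypothesis on $\bb_\bullet$ is not actually used is a small but accurate observation beyond what the paper says explicitly.
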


\begin{proof}
	It can be seen that $\rho(\aa_\bullet, \bb_\bullet) = -\infty$ if and only if $\aa_s \subseteq \bb_r$ for all $s, r \in \NN$. This is the case if and only if $\aa_1 \subseteq \bigcap_{i \ge 1} \bb_i$.
\end{proof}

An example when the condition in \Cref{res_-infty} is satisfied is when $\bb_i = S$ for all $i \ge 0$. The following observations follow immediately from \Cref{res_-infty}.

\begin{remark}
	Let $\aa_\bullet $ and $\bb_\bullet $ be filtration of ideals in $S$.
	\begin{enumerate}
		\item If  $\bigcap_{i\geq 1}\bb_i =(0)$, then $\rho(\aa_\bullet,\bb_\bullet) =-\infty$ if and only if $\aa_i =0$ for all $i\geq 1$.
		\item If $\rho(\aa_\bullet,\bb_\bullet) \neq -\infty$, then $0< \rho(\aa_\bullet,\bb_\bullet).$
		 \item If  $\bigcap_{i\geq 1}\bb_i =(0)$ and $\aa_1 \neq (0)$, then $\rho(\aa_\bullet,\bb_\bullet) \neq -\infty$, and hence, $0< \rho(\aa_\bullet,\bb_\bullet).$
	 \end{enumerate}
\end{remark}

\begin{remark}\label{res_-infty-infty}
	Let $\aa_\bullet $ and $\bb_\bullet $ be filtration of ideals in $S$.  If $\rho(\aa_\bullet,\bb_\bullet) =-\infty$ and $\rho(\bb_\bullet,\aa_\bullet) < \infty$, then $\bb_j=\aa_i=\aa_1$ for all $i\geq 1$ and $j\gg 1$.
\end{remark}

\begin{proof} Since $\rho(\aa_\bullet,\bb_\bullet) =-\infty$, by \Cref{res_-infty}, $\aa_1 \subseteq \bigcap_{i \ge 1} \bb_i$. Let  $k \in \mathbb{N}$ be smallest positive integer such that $k > \rho(\bb_\bullet,\aa_\bullet)$. We have $\bb_{ki} \subseteq \aa_i$ for every $i \geq 1$.  Therefore, for every $i\geq 1$, $\bb_{ki} \subseteq \aa_i \subseteq \aa_1 \subseteq \cap_{j\geq 1}\bb_j \subseteq \bb_{ki}$. This  implies that $\bb_{ki} =\aa_i=\aa_1$ for all $i \ge 1$. It further follows that $\bb_j=\aa_i=\aa_1$ for all $i \geq 1$ and $j \geq k$.
\end{proof}

In studying the finiteness of resurgence numbers, we shall make use of the topology defined by a filtration of ideals.

\begin{definition}
A filtration $\aa_\bullet $, with $\aa_0=S$, defines a topology on the additive group $(S,+)$, which we shall denote by $\tau_\aa$. Particularly, the open neighborhoods of any $x \in S$ is given by $\{x+\aa_i\}_{i\ge 0}$. This makes $(S,+)$ a topological group. We say that the topology $\tau_{\aa}$ is \emph{separated} (or \emph{Hausdorff}) if $\bigcap_{i\ge 0} \aa_i =(0)$. Equivalently, $\tau_{\aa}$ is separated if and only if $\bigcap_{i\gg 1} \aa_i =(0).$
\end{definition}

In general, $\aa_\bullet$ defines a \emph{finer} topology than $\bb_\bullet$ does if, for all $i \ge 1$, there exists a non-negative integer $f_i$ such that $\aa_{f_i} \subseteq \bb_i$. We shall use a slightly stronger notion to compare the topology given by $\aa_\bullet$ and $\bb_\bullet$.

\begin{definition}
	\label{def.top}
	Let $\aa_\bullet $ and $\bb_\bullet $ be filtration of ideals in $S$.
	\begin{enumerate}
		\item The topology $\tau_\aa$ defined by $\aa_\bullet$ is said to be \emph{linearly finer} than the topology $\tau_\bb$ given by $\bb_\bullet$ if there exists a \emph{linear} function $f \in \ZZ_{\ge 0}[x]$ such that for all $i \ge 0$, we have $\aa_{f(i)} \subseteq \bb_i$. In this case, we also say that the topology $\tau_\bb$ is \emph{linearly coarser} than $\tau_\aa$.
		\item The topology $\tau_\aa$ and $\tau_\bb$ are said to be \emph{linearly equivalent} if $\tau_\aa$ is linearly finer than $\tau_\bb$ and $\tau_\bb$ is also linearly finer than $\tau_\aa$.
	\end{enumerate}
\end{definition}

The following result characterizes pairs of filtration whose resurgence number is a finite number.

\begin{theorem}\label{linearly_smaller}
	Let $\aa_\bullet $ and $\bb_\bullet $ be filtration of ideals in $S$. Then, $\tau_{\aa}$ is linearly finer than $\tau_{\bb}$ if and only if  $\rho(\aa_\bullet,\bb_\bullet) <\infty.$
\end{theorem}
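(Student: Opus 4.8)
The plan is to prove both implications directly from the definitions of $\rho(\aa_\bullet,\bb_\bullet)$ and of ``linearly finer''.

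\textbf{The easy direction ($\Leftarrow$).} Suppose $\rho(\aa_\bullet,\bb_\bullet) = c < \infty$. Pick any integer $N > c$. Then for every $s,r \in \NN$ with $s \ge Nr$ we have $\frac{s}{r} \ge N > c = \rho(\aa_\bullet,\bb_\bullet)$, so by definition of the supremum defining $\rho$ we must have $\aa_s \subseteq \bb_r$. In particular, taking $s = Ni$ and $r = i$ gives $\aa_{Ni} \subseteq \bb_i$ for all $i \ge 1$; since $\aa_\bullet$ is a filtration, $\aa_{Ni}\subseteq \aa_0 = S = \bb_0$ handles $i=0$ too (or we simply note $f(i)=Ni$ with $f(0)=0$ works if we set things up with $\aa_0=S\supseteq\bb_0$; since both equal $S$ this is fine). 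Thus the linear function $f(x) = Nx \in \ZZ_{\ge 0}[x]$ witnesses that $\tau_\aa$ is linearly finer than $\tau_\bb$.

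\textbf{The other direction ($\Rightarrow$).} Suppose $\tau_\aa$ is linearly finer than $\tau_\bb$, so there is a linear $f(x) = ax + b \in \ZZ_{\ge 0}[x]$ with $\aa_{f(i)} \subseteq \bb_i$ for all $i \ge 0$. The goal is to bound $\frac{s}{r}$ uniformly over all pairs with $\aa_s \not\subseteq \bb_r$. Fix such a pair $(s,r)$ with $r \ge 1$. If $s \ge f(r) = ar+b$, then since $\aa_\bullet$ is a filtration $\aa_s \subseteq \aa_{f(r)} \subseteq \bb_r$, contradicting $\aa_s \not\subseteq \bb_r$. Hence $s < ar + b \le ar + br = (a+b)r$, giving $\frac{s}{r} < a+b$. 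This bound is independent of $(s,r)$, so $\rho(\aa_\bullet,\bb_\bullet) \le a+b < \infty$. (One should also dispose of the degenerate case $r=0$: by convention $\bb_0 = S$ so $\aa_s \subseteq \bb_0$ always, and such pairs never occur in the defining set; if one insists on allowing $r=0$ the supremum is $-\infty$ or the set is empty, which is still $<\infty$.)

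\textbf{Remarks on obstacles.} There is essentially no serious obstacle here; the proof is a direct unwinding of definitions, and the only subtlety is the bookkeeping around indices $0$ and the conventions $\aa_0 = \bb_0 = S$, together with being slightly careful that the filtration property is what lets us pass from ``$\aa_{f(i)} \subseteq \bb_i$ for the specific values $f(i)$'' to a genuine uniform bound on all non-containing pairs. I would write the two implications as two short paragraphs, invoking only the definition of $\rho$, the definition of ``linearly finer'' (Definition~\ref{def.top}), and the fact that $\aa_\bullet$ is decreasing. If one wants the statement in the cleanest form, note the proof actually shows $\rho(\aa_\bullet,\bb_\bullet) \le a + b$ whenever $f(x) = ax+b$ is a linear bound, and conversely any finite $\rho$ yields the linear bound $f(x) = \lceil \rho \rceil' x$ for any integer strictly above $\rho$ — so finiteness of $\rho$ and linear comparability of the topologies are genuinely the same phenomenon, quantitatively.
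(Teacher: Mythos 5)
Your proposal is correct and follows essentially the same route as the paper: both directions are direct unwindings of the definition of $\rho$ and of ``linearly finer,'' using the filtration property of $\aa_\bullet$ to pass between $\aa_s$ and $\aa_{f(r)}$. The only cosmetic differences are that the paper uses the specific linear function $f(n)=\ceil{\rho}n+1$ where you use $f(n)=Nn$ for an arbitrary integer $N>\rho$, and the paper writes the upper bound in the other direction as $a+|b|$ rather than $a+b$ (identical here since $b\ge 0$); neither affects the substance.
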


\begin{proof}
	Suppose first that $\tau_{\aa}$ is linearly finer than $\tau_{\bb}$. Then, there exists a linear function $f: \mathbb{N} \to \mathbb{N}$, say $f(n)=an+b$, such that  $\aa_{f(i)} \subseteq \bb_i$ for every $i \ge 1$.  Let $s,r \in \NN$ be such that $\dfrac{s}{r}>a+|b|.$ Clearly, $s > r(a+|b|) \geq ar+b=f(r)$, which implies that $\aa_s \subseteq \aa_{f(r)} \subseteq \bb_r$. Thus, $\rho(\aa_\bullet,\bb_\bullet) \leq a+|b|<\infty.$
	
	Conversely, suppose that $\rho(\aa_\bullet,\bb_\bullet)<\infty.$ If $\rho(\aa_\bullet,\bb_\bullet) = -\infty,$ then $\aa_i \subseteq \bb_i$ for all $i \geq 1.$ Thus, $\tau_{\aa}$ is linearly finer than $\tau_{\bb}$. Assume that $\rho(\aa_\bullet, \bb_\bullet) > 0$. Define $f: \NN \to \NN$ by $f(n)=\ceil{\rho(\aa_\bullet,\bb_\bullet)}n+1$ for all $n \geq 1$. Then, for every $i\geq 1$, $\aa_{f(i)} =\aa_{\ceil{\rho(\aa_\bullet,\bb_\bullet)}i+1} \subseteq \bb_i$ as $\dfrac{f(i)}{i} > \rho(\aa_\bullet,\bb_\bullet)$. Hence, $\tau_{\aa}$ is linearly finer than $\tau_{\bb}$.
\end{proof}

As an immediate consequence of \Cref{linearly_smaller}, we obtain the following result on the resurgence number of pairs of filtration that define linearly equivalent topology.

\begin{corollary}\label{linearly_equi}
	Let $\aa_\bullet $ and $\bb_\bullet $ be filtration of ideals in $S$.  Then,  the topology $\tau_{\aa}$ and $\tau_\bb$ are linearly equivalent if and only if   $\rho(\aa_\bullet,\bb_\bullet) <\infty$ and $\rho(\bb_\bullet,\aa_\bullet) < \infty.$
\end{corollary}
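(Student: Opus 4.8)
The plan is to simply unwind the definition of linear equivalence and invoke \Cref{linearly_smaller} twice. By \Cref{def.top}(2), the topologies $\tau_\aa$ and $\tau_\bb$ are linearly equivalent precisely when $\tau_\aa$ is linearly finer than $\tau_\bb$ \emph{and} $\tau_\bb$ is linearly finer than $\tau_\aa$.

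First I would apply \Cref{linearly_smaller} to the pair $(\aa_\bullet, \bb_\bullet)$: it gives that $\tau_\aa$ is linearly finer than $\tau_\bb$ if and only if $\rho(\aa_\bullet, \bb_\bullet) < \infty$. Then I would apply \Cref{linearly_smaller} again, this time to the pair $(\bb_\bullet, \aa_\bullet)$ (which is legitimate since $\aa_\bullet$ and $\bb_\bullet$ are both filtration), to conclude that $\tau_\bb$ is linearly finer than $\tau_\aa$ if and only if $\rho(\bb_\bullet, \aa_\bullet) < \infty$.

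Conjoining these two equivalences yields exactly the claim: $\tau_\aa$ and $\tau_\bb$ are linearly equivalent if and only if $\rho(\aa_\bullet,\bb_\bullet) < \infty$ and $\rho(\bb_\bullet,\aa_\bullet) < \infty$. There is no real obstacle here — the statement is a formal corollary of \Cref{linearly_smaller}, and the only thing to be slightly careful about is that the roles of $\aa_\bullet$ and $\bb_\bullet$ are symmetric in the hypotheses of \Cref{linearly_smaller}, so the second application is valid verbatim.
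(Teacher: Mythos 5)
Your proof is correct and matches the paper's approach exactly: the paper's proof is the one-liner ``The conclusion follows from \Cref{linearly_smaller},'' and your write-up simply unpacks that, applying \Cref{linearly_smaller} to $(\aa_\bullet,\bb_\bullet)$ and to $(\bb_\bullet,\aa_\bullet)$ and conjoining the two equivalences.
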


\begin{proof}
	The conclusion follows from \Cref{linearly_smaller}.
\end{proof}

\begin{example}
Let $\mathfrak{p}$ be a prime ideal in $S$ and $I$ be a $\mathfrak{p}$-primary ideal. Let $k$ be the smallest positive integer such that $\mathfrak{p}^k \subseteq I$ and $l$ be the largest positive integer such that $I\subseteq \mathfrak{p}^l.$ Consider the graded families
$$\aa_\bullet = I^\bullet \text{ and } \bb_\bullet=\mathfrak{p}^\bullet.$$

Since $\bb_{ki}\subseteq \aa_i$ and $\aa_i \subseteq \bb_i$ for all $i$, we get that  the topology $\tau_{\aa}$ and $\tau_\bb$ are linearly equivalent. Therefore, by Corollary \ref{linearly_equi},  $\rho(\aa_\bullet,\bb_\bullet) <\infty$ and $\rho(\bb_\bullet,\aa_\bullet)< \infty$.

We shall see that $\dfrac{1}{l+1} \le \rho(\aa_\bullet,\bb_\bullet) \leq \dfrac{1}{l}$ and $k-1 \le \rho(\bb_\bullet,\aa_\bullet) \leq k.$ Indeed, since $I \not\subseteq \mathfrak{p}^{l+1}$ and $\mathfrak{p}^{k-1} \not \subseteq I$, we have $\dfrac{1}{l+1} \le \rho(\aa_\bullet,\bb_\bullet)$ and $k-1 \le \rho(\bb_\bullet,\aa_\bullet).$

On the other hand, let $s,r$ be positive integers such that $r \le sl$. Then, $I^s \subseteq \mathfrak{p}^{ls} \subseteq \mathfrak{p}^r,$ i.e., $\aa_s \subseteq \bb_r$ if $r \le sl$. Thus, the upper bound for $\rho(\aa_\bullet,\bb_\bullet)$ follows. Similarly, let $s,r$ be positive integers such that $rk \le s$. Then, $\mathfrak{p}^s \subseteq \mathfrak{p}^{kr} \subseteq I^r,$ i.e., $\bb_s \subseteq \aa_r$ if $kr \le s$. Thus, the upper bound for $\rho(\bb_\bullet,\aa_\bullet)$ holds.
\end{example}

\begin{remark} We will see, as a consequence of \Cref{cor.rhoRatLim} below, that when $S$ is a domain as in \Cref{asym_res_ideal}, $\aa_\bullet$ is a filtration and $\bb_\bullet$ is a $\bb$-equivalent graded family, $\rhat(\aa_\bullet, \bb_\bullet) < \infty$ if and only if $\rho(\aa_\bullet, \bb_\bullet) < \infty$.
\end{remark}

\begin{corollary}
	\label{cor.rationalR}
	Let $S$ be an analytically unramified local ring. Let $\aa_\bullet$ and $\bb_\bullet$ be as in \Cref{cor.rationalAR}. Suppose further that $\bb_\bullet$ is $\bb$-equivalent, for some ideal $\bb \subseteq S$. Then, $\rho(\aa_\bullet, \bb_\bullet)$ is either infinity or a rational number.
\end{corollary}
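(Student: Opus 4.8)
The plan is to deduce the statement from \Cref{cor.rationalAR} and \Cref{thm.rhoReesVal} by a dichotomy on whether $\rho(\aa_\bullet,\bb_\bullet)$ agrees with the asymptotic resurgence number. Since $\rhat(\aa_\bullet,\bb_\bullet)\le\rho(\aa_\bullet,\bb_\bullet)$ always holds, exactly one of the following occurs: either $\rho(\aa_\bullet,\bb_\bullet)=\rhat(\aa_\bullet,\bb_\bullet)$, or $\rho(\aa_\bullet,\bb_\bullet)>\rhat(\aa_\bullet,\bb_\bullet)$.

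If $\rho(\aa_\bullet,\bb_\bullet)=\rhat(\aa_\bullet,\bb_\bullet)$, I invoke \Cref{cor.rationalAR} directly: its hypotheses --- the two Veronese subalgebras $\R^{[k]}(\aa_\bullet)$ and $\R^{[\ell]}(\bb_\bullet)$ being standard graded $S$-algebras, and $\R(\overline{\bb_\bullet})$ being module-finite over $\R(\bb_\bullet)$ --- are exactly what ``$\aa_\bullet$ and $\bb_\bullet$ as in \Cref{cor.rationalAR}'' provides, so that corollary shows $\rhat(\aa_\bullet,\bb_\bullet)$, hence $\rho(\aa_\bullet,\bb_\bullet)$, is either infinity or a rational number.

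If $\rho(\aa_\bullet,\bb_\bullet)>\rhat(\aa_\bullet,\bb_\bullet)$, I verify the three hypotheses of \Cref{thm.rhoReesVal} for the pair $(\aa_\bullet,\bb_\bullet)$ over the Noetherian domain $S$. Hypothesis (1), that $\rhat(\aa_\bullet,\overline{\bb_\bullet})<\rho(\aa_\bullet,\bb_\bullet)$, follows from \Cref{asym_res_int} (applicable since $\R(\overline{\bb_\bullet})$ is module-finite over $\R(\bb_\bullet)$), which gives $\rhat(\aa_\bullet,\overline{\bb_\bullet})=\rhat(\aa_\bullet,\bb_\bullet)<\rho(\aa_\bullet,\bb_\bullet)$. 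Hypotheses (2) and (3) --- that $\widehat{v}(\bb_\bullet)=v(\bb_1)$ for every $v\in\RV(\bb_1)$, and that $\overline{\bb_{i+k}}\subseteq\bb_i$ for all $i$ and some positive integer $k$ --- hold by \Cref{rmk.regularRing}, since $S$ is analytically unramified local and $\bb_\bullet$ is a $\bb$-equivalent filtration of ideals; alternatively, (3) can be extracted from the proof of \Cref{thm.asym_res_int} with $\bb_\bullet'=\overline{\bb_\bullet}$, and (2) from \Cref{lem.a1} together with the containment $\bb\subseteq\bb_1$ forced by $\bb$-equivalence, which yields $\widehat{v}(\bb_\bullet)\le v(\bb_1)\le v(\bb)=\widehat{v}(\bb_\bullet)$. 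With all three hypotheses in hand, \Cref{thm.rhoReesVal} concludes that $\rho(\aa_\bullet,\bb_\bullet)$ is a positive rational number, finishing the proof.

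Because the argument is essentially a matter of steering each regime to the appropriate previously established result, I do not foresee a genuine obstacle. The one point that deserves care is organizing the case split cleanly: the \emph{balanced} case $\rho=\rhat$ is precisely where \Cref{cor.rationalAR}, and the Rees-valuation machinery behind it, does all the work, whereas the \emph{unbalanced} case $\rho>\rhat$ is exactly the situation \Cref{thm.rhoReesVal} was designed for --- and there one must make sure the analytically-unramified-local hypothesis genuinely delivers conditions (2) and (3) of that theorem, which is the content of \Cref{rmk.regularRing}. A minor secondary check is that the standing assumptions carried over from \Cref{cor.rationalAR} are all available, which they are by the phrasing of the statement.
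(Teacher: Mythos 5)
Your proposal is correct and follows essentially the same route as the paper: a dichotomy on whether the resurgence number coincides with the asymptotic resurgence number, handled in the equality case by \Cref{cor.rationalAR} and in the strict-inequality case by verifying the hypotheses of \Cref{thm.rhoReesVal} via \Cref{rmk.regularRing} (equivalently \Cref{lem.a1} and \Cref{asym_res_int}/\Cref{thm.asym_res_int}). The only cosmetic difference is that you phrase the dichotomy in terms of $\rhat(\aa_\bullet,\bb_\bullet)$ while the paper uses $\rhat(\aa_\bullet,\overline{\bb_\bullet})$; since \Cref{asym_res_int} identifies these two quantities under the standing hypotheses, the two case splits are the same.
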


\begin{proof}
	If $\rhat(\aa_\bullet, \overline{\bb_\bullet}) = \rho(\aa_\bullet, \bb_\bullet)$ then the assertion follows from \Cref{cor.rationalAR}. If $\rhat(\aa_\bullet, \overline{\bb_\bullet}) < \rho(\aa_\bullet, \bb_\bullet)$ then the assertion follows from \Cref{thm.rhoReesVal} and \Cref{rmk.regularRing}.
\end{proof}


We continue to our final results on the rationality of resurgence number with a condition in terms of $\rlim$. This result is new even in the standard case of filtration of symbolic and ordinary powers.

\begin{theorem}\label{thm.rhoRat}
Let $\aa_\bullet$ and $\bb_\bullet$ be graded families of ideals in $S$. If $\rlim(\aa_\bullet, \bb_\bullet) \not= \rho(\aa_\bullet, \bb_\bullet)$, then $\rho(\aa_\bullet, \bb_\bullet)$ is a rational number.
\end{theorem}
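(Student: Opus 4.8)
The plan is to exploit the two identities recorded just before the statement, namely
$\rho(\aa_\bullet,\bb_\bullet)=\sup_{s}\{s/\beta_s(\aa_\bullet,\bb_\bullet)\mid \beta_s(\aa_\bullet,\bb_\bullet)<\infty\}=\sup_{n\in\NN}\rho^n(\aa_\bullet,\bb_\bullet)$,
together with the fact that $\{\rho^n(\aa_\bullet,\bb_\bullet)\}_{n\geq 1}$ is a nonincreasing sequence. These give $\rho(\aa_\bullet,\bb_\bullet)=\rho^1(\aa_\bullet,\bb_\bullet)$, while $\rlim(\aa_\bullet,\bb_\bullet)=\lim_n\rho^n(\aa_\bullet,\bb_\bullet)=\inf_n\rho^n(\aa_\bullet,\bb_\bullet)$. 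Since $\rlim(\aa_\bullet,\bb_\bullet)\le\rho(\aa_\bullet,\bb_\bullet)$ always, the hypothesis $\rlim\neq\rho$ sharpens to the strict inequality $\rlim(\aa_\bullet,\bb_\bullet)<\rho(\aa_\bullet,\bb_\bullet)$. As $\rho(\aa_\bullet,\bb_\bullet)=\rho^1(\aa_\bullet,\bb_\bullet)$ is not a lower bound of $\{\rho^n(\aa_\bullet,\bb_\bullet)\}$, there is an index $n_0$ — take the least such — with $\rho^{n_0}(\aa_\bullet,\bb_\bullet)<\rho(\aa_\bullet,\bb_\bullet)$.

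First I would rule out the infinite cases. If $\rho(\aa_\bullet,\bb_\bullet)=-\infty$, then $\beta_s(\aa_\bullet,\bb_\bullet)=\infty$ for every $s$, so every $\rho^n(\aa_\bullet,\bb_\bullet)=-\infty$, whence $\rlim(\aa_\bullet,\bb_\bullet)=-\infty=\rho(\aa_\bullet,\bb_\bullet)$, contradicting the hypothesis. If $\rho(\aa_\bullet,\bb_\bullet)=+\infty$, I would use that $\beta_s(\aa_\bullet,\bb_\bullet)\geq 1$ for all $s$ (since $\aa_s\subseteq S=\bb_0$), so $s/\beta_s\leq s$; thus for any bound $N$ the inequality $s/\beta_s>N$ can hold only for $s>N$, so there are arbitrarily large $s$ with $s/\beta_s>N$, forcing every tail supremum $\rho^n(\aa_\bullet,\bb_\bullet)$ to equal $+\infty$ and hence $\rlim(\aa_\bullet,\bb_\bullet)=+\infty=\rho(\aa_\bullet,\bb_\bullet)$, again a contradiction. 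Therefore $\rho(\aa_\bullet,\bb_\bullet)$ is a (finite) real number.

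With $\rho(\aa_\bullet,\bb_\bullet)\in\RR$ and $n_0$ as above, I would split the defining supremum for $\rho(\aa_\bullet,\bb_\bullet)=\rho^1(\aa_\bullet,\bb_\bullet)$ according to whether $s<n_0$ or $s\geq n_0$:
\[
\rho(\aa_\bullet,\bb_\bullet)=\max\Big(\max\{s/\beta_s(\aa_\bullet,\bb_\bullet)\mid 1\le s\le n_0-1,\ \beta_s(\aa_\bullet,\bb_\bullet)<\infty\},\ \rho^{n_0}(\aa_\bullet,\bb_\bullet)\Big).
\]
Because $\rho^{n_0}(\aa_\bullet,\bb_\bullet)<\rho(\aa_\bullet,\bb_\bullet)$, the maximum must be attained by the first set; in particular that set is nonempty (so $n_0\geq 2$), and $\rho(\aa_\bullet,\bb_\bullet)=\max\{s/\beta_s(\aa_\bullet,\bb_\bullet)\mid 1\le s\le n_0-1,\ \beta_s(\aa_\bullet,\bb_\bullet)<\infty\}$. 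This is a maximum of finitely many ratios $s/\beta_s(\aa_\bullet,\bb_\bullet)$ of positive integers, hence a rational number, completing the proof.

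There is no genuine obstacle here: the argument is bookkeeping with a nonincreasing sequence and its suprema. The only step that requires a little care is excluding $\rho(\aa_\bullet,\bb_\bullet)=\pm\infty$ — and within that, the observation (using $\beta_s\ge 1$) that an unbounded value of $s/\beta_s$ can only come from unbounded $s$, which is what prevents a "large early jump" from detaching $\rlim$ from $\rho$ when $\rho=+\infty$.
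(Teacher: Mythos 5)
Your proof is correct and follows essentially the same strategy as the paper's: rule out $\rho(\aa_\bullet,\bb_\bullet)=\pm\infty$, then use a cutoff index to split the defining supremum for $\rho(\aa_\bullet,\bb_\bullet)$ into a finite head and a tail, and argue the tail cannot achieve the supremum. The only cosmetic differences are that the paper locates the cutoff via an $\epsilon$-type argument (setting $\theta=\rho-\rho^{\lim}>0$ and choosing $n_1$ with $\rho^n<\rho-\theta/2$ for $n\ge n_1$), whereas you take $n_0$ minimal with $\rho^{n_0}<\rho$ and exploit that $\rho^1=\rho$ and the sequence $\{\rho^n\}$ is nonincreasing; and the paper packages the exclusion of $\rho=+\infty$ as a separate claim ($\rho^{\lim}<\infty\iff\rho<\infty$) rather than arguing, as you do, that $\rho^n=+\infty$ for every $n$ forces $\rho^{\lim}=+\infty$. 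Both routes are correct and equally rigorous.
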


\begin{proof} We first claim that $\rlim(\aa_\bullet, \bb_\bullet) < \infty$ if and only if $\rho(\aa_\bullet, \bb_\bullet) < \infty.$ One implication is obvious. Suppose that $\rlim(\aa_\bullet, \bb_\bullet) < \infty$. Take any $M \in \RR$ with $\rlim(\aa_\bullet, \bb_\bullet) < M$. By definition, there exists a positive integer $n_0$ such that $\rho^n(\aa_\bullet, \bb_\bullet)<M$ for all $n \ge n_0$.
Consider any $s,r \in \NN$ such that $\aa_s \not\subseteq \bb_r$. If $s \ge n_0$ then, by the definition of $\rho^{n_0}(\aa_\bullet, \bb_\bullet)$, we have $\dfrac{s}{r} \le \rho^{n_0}(\aa_\bullet, \bb_\bullet)<M.$ On the other hand, if $s < n_0$ then $\dfrac{s}{r} \le n_0$. Thus,
$$\rho(\aa_\bullet, \bb_\bullet) \le \max\{n_0, M\}< \infty.$$

Now, if $\rlim(\aa_\bullet, \bb_\bullet) < \rho(\aa_\bullet, \bb_\bullet)$ then it follows from our claim that $\rho(\aa_\bullet, \bb_\bullet) < \infty$. Set
$$\theta = \rho(\aa_\bullet, \bb_\bullet) - \rlim(\aa_\bullet, \bb_\bullet) > 0.$$
Since $\displaystyle\lim_{n \rightarrow \infty} \rho^n(\aa_\bullet, \bb_\bullet) = \rlim(\aa_\bullet, \bb_\bullet) = \rho(\aa_\bullet, \bb_\bullet) - \theta$, there exists $n_1 \in \NN$ such that, for all $n \ge n_1$, $\rho^n(\aa_\bullet, \bb_\bullet) < \rho(\aa_\bullet, \bb_\bullet)-\frac{\theta}{2}.$
This implies that, for $s \ge n_1$, if $\beta_s(\aa_\bullet, \bb_\bullet) < \infty$, then we have
$$\dfrac{s}{\beta_s(\aa_\bullet, \bb_\bullet)} < \rho(\aa_\bullet, \bb_\bullet) - \frac{\theta}{2}.$$
Hence, together with \Cref{eq.rhosup}, it follows that
$$\rho(\aa_\bullet, \bb_\bullet) = \max  \left\{\dfrac{s}{\beta_s(\aa_\bullet, \bb_\bullet)}~\Big|~ 1 \le s <n_1 \text{ and } \beta_s(\aa_\bullet, \bb_\bullet) < \infty \right\}.$$
Particularly, $\rho(\aa_\bullet, \bb_\bullet)$ is a rational number.
\end{proof}

As an immediate consequence of \Cref{thm.rhoRat} and  \Cref{thm.rhatrlimrho} we obtain the following results.

\begin{corollary} \label{cor.rhoRatLim}
Let $S$ be a domain as in \Cref{asym_res_ideal}. Let $\aa_\bullet$ be a filtration and let $\bb_\bullet$ be a graded family of nonzero ideals in $S$. Suppose that $\bb_\bullet$ is $\bb$-equivalent for some ideal $\bb \subseteq S$. If $\rhat(\aa_\bullet, \overline{\bb_\bullet}) \not= \rho(\aa_\bullet, \bb_\bullet)$, then $\rho(\aa_\bullet, \bb_\bullet)$ is a rational number.
\end{corollary}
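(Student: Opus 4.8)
The plan is to obtain this as a formal consequence of two results already in place, \Cref{thm.rhatrlimrho} and \Cref{thm.rhoRat}, so that no new argument is needed. First I would observe that the hypotheses here are exactly those of \Cref{thm.rhatrlimrho}: $S$ is a domain of one of the three types in \Cref{asym_res_ideal}, $\aa_\bullet$ is a filtration of nonzero ideals, and $\bb_\bullet$ is $\bb$-equivalent for some ideal $\bb\subseteq S$. Hence \Cref{thm.rhatrlimrho} applies and gives the chain of equalities
\[
\rhat(\aa_\bullet,\overline{\bb_\bullet}) = \rlim(\aa_\bullet,\overline{\bb_\bullet}) = \rhat(\aa_\bullet,\bb_\bullet) = \rlim(\aa_\bullet,\bb_\bullet).
\]
In particular $\rlim(\aa_\bullet,\bb_\bullet) = \rhat(\aa_\bullet,\overline{\bb_\bullet})$.

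Now suppose $\rhat(\aa_\bullet,\overline{\bb_\bullet}) \neq \rho(\aa_\bullet,\bb_\bullet)$. Substituting the identity above, this says precisely that $\rlim(\aa_\bullet,\bb_\bullet) \neq \rho(\aa_\bullet,\bb_\bullet)$, which is the hypothesis of \Cref{thm.rhoRat}. Since \Cref{thm.rhoRat} holds for arbitrary graded families of ideals in a Noetherian ring — there are no further assumptions to verify — it follows at once that $\rho(\aa_\bullet,\bb_\bullet)$ is a rational number, as desired.

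I do not expect a genuine obstacle here: the real content is carried by \Cref{thm.rhatrlimrho} (which identifies $\rlim$ with $\rhat$ in the $\bb$-equivalent setting, by combining \Cref{thm.b1equivalent} with \Cref{lem.rholimb1equi}) and by \Cref{thm.rhoRat} (whose proof, once $\rlim(\aa_\bullet,\bb_\bullet)<\rho(\aa_\bullet,\bb_\bullet)$, exhibits $\rho(\aa_\bullet,\bb_\bullet)$ as a maximum over the finitely many ratios $s/\beta_s(\aa_\bullet,\bb_\bullet)$ with $s<n_1$). The only thing to be careful about is the bookkeeping of hypotheses, and these line up exactly. One may also remark that, by the same chain of equalities, the hypothesis $\rhat(\aa_\bullet,\overline{\bb_\bullet}) \neq \rho(\aa_\bullet,\bb_\bullet)$ may be replaced by $\rhat(\aa_\bullet,\bb_\bullet) \neq \rho(\aa_\bullet,\bb_\bullet)$ without any change, so this statement simultaneously covers the formulation given in the introduction and generalizes \cite[Proposition 2.2]{DD2020}.
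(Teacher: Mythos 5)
Your proof is correct and matches the paper's approach exactly: the paper states this corollary as an immediate consequence of \Cref{thm.rhatrlimrho} (which identifies $\rhat(\aa_\bullet,\overline{\bb_\bullet})$ with $\rlim(\aa_\bullet,\bb_\bullet)$) and \Cref{thm.rhoRat} (the rationality criterion via $\rlim$). The bookkeeping of hypotheses is as you describe, and your closing remark about the equivalent formulation is also accurate.
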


\begin{corollary} \label{cor.rhoRat}
Let $S$ be as in \Cref{asym_res_ideal} and let $I \subseteq S$ be a nonzero proper ideal. Then,
\begin{enumerate}
\item $\rho(\overline{I^\bullet}, I^\bullet)$ is a rational number.
\item if $\rhat(I) \not= \rho(I)$, then $\rho(I)$ is a rational number.
\end{enumerate}
\end{corollary}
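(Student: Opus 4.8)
The plan is to reduce both statements to \Cref{cor.rhoRatLim}, applied to well-chosen pairs of graded families, handling by hand the only case that corollary does not cover, namely when the resurgence number already agrees with an asymptotic resurgence number that we can identify explicitly.

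For (2), I would take $\aa_\bullet=I^{(\bullet)}$ and $\bb_\bullet=I^\bullet$, so that $\rho(I)=\rho(\aa_\bullet,\bb_\bullet)$ and $\rhat(I)=\rhat(\aa_\bullet,\bb_\bullet)$. Since $S$ is a domain and $I\neq 0$, $\aa_\bullet$ is a filtration of nonzero ideals, and $\bb_\bullet=I^\bullet$ is $I$-equivalent. Hence \Cref{thm.b1equivalent} applies and gives $\rhat(I)=\rhat(\aa_\bullet,\bb_\bullet)=\rhat(\aa_\bullet,\overline{\bb_\bullet})$. Therefore the hypothesis $\rhat(I)\neq\rho(I)$ is precisely $\rhat(\aa_\bullet,\overline{\bb_\bullet})\neq\rho(\aa_\bullet,\bb_\bullet)$, and \Cref{cor.rhoRatLim} yields that $\rho(I)=\rho(\aa_\bullet,\bb_\bullet)$ is a rational number.

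For (1), I would take $\aa_\bullet=\overline{I^\bullet}$ and $\bb_\bullet=I^\bullet$; again $\aa_\bullet$ is a filtration of nonzero ideals and $\bb_\bullet$ is $I$-equivalent, and moreover $\overline{I^\bullet}$ itself is $I$-equivalent by \cite[Proposition 5.3.4]{SH2006}. Applying \Cref{cor.IntRes} to the pair $(\overline{I^\bullet},\overline{I^\bullet})$ and using \Cref{lem.a1}, which gives $\vhat(\overline{I^\bullet})=v(I)$ for every valuation $v$ of the quotient field supported on $S$, I obtain $\rhat(\overline{I^\bullet},\overline{I^\bullet})=\max_{v\in\RV(I)}\{v(I)/\vhat(\overline{I^\bullet})\}=1$, the set $\RV(I)$ being nonempty and finite since $I$ is a nonzero proper ideal. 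Now split into two cases. If $\rho(\overline{I^\bullet},I^\bullet)=\rhat(\overline{I^\bullet},\overline{I^\bullet})=1$, then $\rho(\overline{I^\bullet},I^\bullet)$ is rational. Otherwise, noting $\overline{\bb_\bullet}=\overline{I^\bullet}$, we have $\rho(\aa_\bullet,\bb_\bullet)\neq\rhat(\aa_\bullet,\overline{\bb_\bullet})$, so \Cref{cor.rhoRatLim} again forces $\rho(\overline{I^\bullet},I^\bullet)$ to be rational. This proves (1).

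The bookkeeping — checking that the auxiliary families satisfy the hypotheses of \Cref{thm.b1equivalent}, \Cref{cor.IntRes} and \Cref{cor.rhoRatLim} — is the only delicate point, and the sole genuine computation is the identification $\rhat(\overline{I^\bullet},\overline{I^\bullet})=1$, which guarantees that the degenerate branch of the dichotomy in part (1) already outputs a rational value; everything else is a direct appeal to the results established above.
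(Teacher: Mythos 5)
Your proposal is correct and follows the same strategy as the paper: reduce to \Cref{cor.rhoRatLim}, using that $I^\bullet$ is $I$-equivalent. The only place you go beyond the paper's (very terse) one-line proof is in part (1), where you explicitly handle the degenerate case $\rho(\overline{I^\bullet},I^\bullet)=\rhat(\overline{I^\bullet},\overline{I^\bullet})$: by computing via \Cref{cor.IntRes} and \Cref{lem.a1} that $\rhat(\overline{I^\bullet},\overline{I^\bullet})=1$, you show the fallback value is rational. This is a genuine gap in the paper's stated justification — \Cref{cor.rhoRatLim} only yields rationality when the two invariants differ — and your computation is the clean way to close it; one could alternatively note that $\rhat(\aa_\bullet,\aa_\bullet)\le 1$ for any filtration $\aa_\bullet$ and that valuations force $\overline{I^n}\not\subseteq\overline{I^{n+1}}$, giving $\rhat(\overline{I^\bullet},\overline{I^\bullet})=1$ directly, but your appeal to \Cref{cor.IntRes} keeps the argument within the results already established. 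Part (2) is handled exactly as the paper intends, via \Cref{thm.b1equivalent} to translate $\rhat(I)$ into $\rhat(I^{(\bullet)},\overline{I^\bullet})$ and then \Cref{cor.rhoRatLim}.
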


\begin{proof} The assertion is a direct consequence of \Cref{cor.rhoRatLim}, noticing that the family $\{I^i\}_{i \ge 1}$ is $I$-equivalent.
\end{proof}

We end the paper with the following general question.

\begin{question}
	Characterize for which pairs of graded families $(\aa_\bullet, \bb_\bullet)$, the resurgence and asymptotic resurgence numbers, $\rho(\aa_\bullet, \bb_\bullet)$ and $\rhat(\aa_\bullet, \bb_\bullet)$, are rational.
\end{question}




\end{document}